\def\lto{{\longrightarrow}}
\def\into{{\hookrightarrow}}
\def\xto{\xrightarrow}
\def\onto{\twoheadrightarrow}
\newcommand{\calh}{{\mathcal H}}
\newcommand{\calr}{{\mathcal R}}
\newcommand{\CC}{{\mathbb C}}
\newcommand{\HH}{{\mathbb H}}
\newcommand{\RR}{{\mathbb R}}
\renewcommand{\SS}{{\mathbb S}}
\newcommand{\ZZ}{{\mathbb Z}}
\newcommand{\bfi}{{\mathbf i}}
\newcommand{\bfj}{{\mathbf j}}
\newcommand{\bfk}{{\mathbf k}}
\newcommand{\sfD}{\ensuremath{\mathsf D}}
\newcommand{\sfG}{\ensuremath{\mathsf G}}
\newcommand{\vp}{\varphi}
\DeclareMathOperator{\Aut}{Aut}
\DeclareMathOperator{\car}{char}
\DeclareMathOperator{\Cl}{\mathit{Cl}}
\DeclareMathOperator{\GL}{GL}
\DeclareMathOperator{\id}{id}
\DeclareMathOperator{\Imm}{Im}
\DeclareMathOperator{\Ker}{Ker}
\DeclareMathOperator{\Mat}{\mathsf{Mat}}
\DeclareMathOperator{\Or}{\mathsf {O}}
\DeclareMathOperator{\Pin}{\mathsf{Pin}}
\DeclareMathOperator{\PIN}{\mathsf{PIN}}
\DeclareMathOperator{\rank}{rank}
\DeclareMathOperator{\Refl}{Refl}
\DeclareMathOperator{\SL}{SL}
\DeclareMathOperator{\SO}{SO}
\DeclareMathOperator{\Spin}{\mathsf{Spin}}
\DeclareMathOperator{\SU}{SU}
\DeclareMathOperator{\su}{\mathfrak{su}}
\DeclareMathOperator{\Sym}{{\mathbb S}ym}
\DeclareMathOperator{\tr}{tr}
\DeclareMathOperator{\Un}{U}
\newcommand{{\sbullet}}{{\scriptstyle\bullet}}
\newcommand{\uzero}{\underline 0}
\newcommand{\uone}{\underline 1}
\theoremstyle{definition}
\newtheorem{defn}{Definition}[section]
\newtheorem{remark}[defn]{Remark}
\newtheorem{remarks}[defn]{Remarks}
\newtheorem{sit}[defn]{}
\newtheorem{example}[defn]{Example}
\theoremstyle{plain}
\newtheorem{proposition}[defn]{Proposition}
\newtheorem{theorem}[defn]{Theorem}
\newtheorem{lemma}[defn]{Lemma}
\newtheorem{cor}[defn]{Corollary}
\newtheorem{corollary}[defn]{Corollary}
\author[Ragnar-Olaf Buchweitz]{Ragnar-Olaf Buchweitz$^\dagger$}
\address{Dept.\ of Computer and Math\-ematical Sciences,
University  of Tor\-onto at Scarborough, 
1265 Military Trail, 
Toronto, ON M1C 1A4,
Canada}
\author{Eleonore Faber}
\address{
School of Mathematics, 
University of Leeds, 
LS2 9EJ Leeds, UK
}
\email{e.m.faber@leeds.ac.uk}
\author{Colin Ingalls}
\address{
School of Mathematics and Statistics,
Carleton University, 
Ottawa, ON K1S 5B6,
Canada}
\email{cingalls@math.carleton.ca}
\title{The magic square of reflections and rotations}
\dedicatory{Dedicated to the memories of two great 
geometers, H.M.S.~Coxeter 
and Peter Slodowy}
\begin{document}

\thanks{
R.-O. \!B.~and C.I.~were partially supported by their respective NSERC Discovery grants, and E.F.~ acknowledges support of the European Union's Horizon 2020 research and innovation programme under the 
Marie Sk{\l}odowska-Curie grant agreement No 789580. All three authors were supported by the Mathematisches Forschungsinstitut Oberwolfach through the Leibniz fellowship program in 2017, and E.F~ and C.I.~were supported by the Research in Pairs program in 2018. 
} 
\thanks{${}^\dagger$The first author passed away on November 11, 2017.}
\subjclass[2010]{
20F55, %Reflection and Coxeter groups
15A66, %Clifford algebras, spinors
11E88, %Quadratic spaces, clifford algebras
51F15 %Reflection groups, reflection geometries
14E16   %McKay correspondence
}  
\keywords{Finite reflection groups, Clifford algebras, Quaternions, Pin groups, McKay correspondence}
\date{\today}

\begin{abstract}
The title refers to the square diagram of bijections between various classes of finite subgroups of classical groups. In this paper we survey how Coxeter's work implies a bijection between complex reflection groups of rank two and real reflection groups in $\Or(3)$. We also consider this magic square of reflections and rotations in the framework of Clifford algebras: we give an interpretation using (s)pin groups and explore these groups in small dimensions.
\end{abstract}

\maketitle

%{\tiny{\tableofcontents}} 

\section{Introduction}

The main objective of this paper is to explain  the following square diagram that bijectively relates 
finite rotation and reflection groups from Euclidean geometry to corresponding finite subgroups in 
$\GL(2,\CC)$. 
\begin{align*}
\xymatrix{
*++[F]\txt{Finite Subgroups\\
of $\SL(2,\CC)$\\ of even order\\
(up to conjugation)}&&&&&&
*++[F]\txt{Finite Reflection Subgroups\\
of $\GL(2,\CC)$ containing $-\mathbf 1$\\
(up to conjugation)}
\ar[llllll]+<55pt,0pt>_-{\small\txt{$- \,\cap\,\SL(2,\CC)$\\
1:1}}^-{}\\
\\
\\
*++[F]{\txt{Finite Subgroups\\
of $\SL(3,\RR)$\\
(up to conjugation)}} \ar@{-}[uuu]-<0pt,32pt>^-{\small\txt{1:1}}_-{ }&&&&&&
*++[F]{\txt{Finite Reflection Subgroups\\
of $\GL(3,\RR)$\\
(up to conjugation)}} 
\ar[llllll]+<55pt,0pt>_-{\small\txt{$- \,\cap\,\SL(3,\RR)$\\
1:1}}^-{ }\ar@{-}[uuu]-<-0pt,26pt>^-{\small\txt{1:1}}
}
\end{align*}

The bijections are well-known, although scattered in the literature. Our goal is to describe them, giving historical background and references, and finally to make them explicit, employing (s)pinor geometry and classical results about the Clifford algebra of Euclidean space $\RR^3$. In particular, the finite subgroups of $\SL(2,\CC)$ can also be seen as the finite subgroups of unit quaternions, which relates them to geometric algebras: the double covering of $\SO(3)$ by $\SU(2)$ yields the left vertical bijection in the square. Since $\SU(2)$ is the $\Spin$-group of the Clifford algebra of $\RR^3$, a natural question is, whether one can also interpret the right hand vertical map in the square as a covering of $\Or(3)$ by the  $\Pin$-group. This correspondence is made explicit in this paper (see Theorem \ref{Thm:main}). We explain all ingredients needed for its proof in the following sections.  \\
This leads to the last part of the paper, where we consider the various definitions of the $\Pin$ group that are in use in the literature, see also this mathoverflow post of the first author \cite{Ragnar-overflow}. We point out differences between them for some  small dimensional examples over $\RR$.

The edges in the magic square are frequently used to jump between the classifications of finite subgroups of $\SL(2,\CC), \SL(3,\RR)$ on the one side, and certain finite groups generated by reflections on the other side. In particular, the rotational symmetries of the Platonic solids in $\RR^3$ correspond to finite subgroups of $\RR^3$ generated by reflections, i.e., the full symmetry groups of the platonic solids, on the other side.
The upper edge between finite subgroups of $\SL(2,\CC)$ and finite reflection subgroups of $\GL(2,\CC)$ containing $-{\bf 1}$ has been used in the McKay correspondence: the classical McKay correspondence connects the irreducible representations of a finite group $G \leqslant \SL(2,\CC)$ to the cohomology of the minimal resolution of the quotient singularity $\CC^2/G$, which is an ADE-surface singularity \cite{BuchweitzMFO,BFI-ICRA,SprinbergVerdier,KnoerrerGroups}. Moreover, the McKay correspondence for reflection groups relates the non-trivial irreducible representations of a finite  complex reflection group to certain maximal Cohen--Macaulay modules over its discriminant, where the discriminant is an ADE-curve singularity, see \cite{BFI} for more details.

In this paper, we first take the reader on a tour through the magic square of reflections and rotations: in Sections \ref{sec:notation} and \ref{Sec:magicsquare} we first define all necessary groups and state our main result (Theorem \ref{Thm:main}): an explicit description of the corners of the square in terms of the (S)pin groups of the standard Euclidean structure on $\RR^3$. We also provide extensive references and historic background. Then, in order to prove our statements, we introduce quaternions in Section \ref{Sec:Quaternions} and show in particular, how to interpret the unimodular unitary group $U(2)^\pm$ in terms of quaternions (Corollary \ref{Cor:quaternionsunimodular}). Section \ref{Sec:Clifford3} deals with the Clifford algebra of Euclidean space $\RR^3$ and in Section \ref{Sec:Pin3} we discuss the various definition of its Pin groups. Their precise relations are stated in Theorem \ref{Thm:3Pindifferences}. We conclude Section \ref{Sec:Reflections} with the proof of Theorem \ref{Thm:main}. In the last part of the paper, Section \ref{Sec:CliffordGeneral}, we consider general Clifford algebras and reflections and how they fit into a square diagram (see Theorem \ref{Thm:squaregeneral}). Finally, we consider some small dimensional examples of real Clifford algebras.

\section{The groups} \label{sec:notation}

We will be concerned with real and complex rotations, so $K=\RR$ or $\CC$. Recall the following groups:

\begin{itemize}
\item[$\GL(n,K)$] is the {\em general linear group\/} of invertible $n\times n$ 
matrices with entries from the field $K$, for $n\geqslant 1$ a natural number.
\item 
[$\Un(n)$] is the {\em unitary group\/} of complex $n\times n$ matrices defined as
\[
\Un(n) = \{A\in\GL(n,\CC)\mid A\cdot \overline A^{T} = \mathbf 1\}\,.
\] 
Throughout, $B^{T}$ denotes the transpose of the matrix $B$, and $\overline B$ is the 
complex conjugate matrix, where every entry from $B$ is replaced by its complex conjugate.
The matrix $\overline B^{T} = \overline {(B^{T})} = (\overline B)^{T}$ is also called the {\em Hermitian transpose\/}
of $B$ and is often denoted $B^{\dagger}$.
\item 
[$\SU(n)$] is the {\em special unitary group\/} of complex $n\times n$ matrices,
\[
\SU(n) = \{A\in\Un(n)\mid \det A = 1\}\,.
\]
As we will recall below, the group $\SU(2)$ is isomorphic to $\Spin(3)$, the spinor group
of Euclidean $3$--space with either its positive or its negative definite quadratic form.
\item 
[$\Or(n)$] is the {\em orthogonal group\/} of real $n\times n$ matrices,
\[
\Or(n) = \{A\in\GL(n,\RR)\mid A\cdot A^{T} = \mathbf 1\}\,.
\]
By the {\sc Cartan--Dieudonn\'e} Theorem, each element of $\Or(n)$ is the product of at most $n$ 
{\em reflections}. 
\item 
[$\SO(n)$] is the {\em special orthogonal group},
\[
\SO(n) = \{A\in\Or(n)\mid \det A = 1\}\,.
\]
Each element of $\SO(n)$ is a product of rotations, where a {\em rotation\/} is the product of 
two reflections from $\Or(n)$. In particular, every element in $\SO(3)$ is a rotation, which is {\sc Euler}'s
Theorem.
\item[$\mu_{n}$] is the multiplicatively written cyclic group of order $n$. Its additive incarnation is denoted
$\ZZ{/}n\ZZ$, and, generically, $\mathsf C_{n}$ stands for any cyclic group of order $n$. Instead of $\mu_{2}$
we also write $\{\pm 1\}$.
\end{itemize}

With details to follow below, we  single out the following groups, that will be an important part of relating the standard groups with (s)pinor geometry  and classical results on the structure of the Clifford algebra of Euclidean space $\RR^3$. 

\begin{defn}
For $n\geqslant 1$ an integer, set 
\[
\Un(n)^{\pm 1} =
\{A\in \GL(n,\CC)\mid A\cdot \overline A^{T} = \mathbf 1, \  \det A = \pm 1\}\leqslant \Un(n) \leqslant\GL(n,\CC)\,,
\] 
and call this subgroup of $\GL(n,\CC)$ the {\em unimodular%
\footnote{According to {\sc MathSciNet}, the terminology ``unimodular unitary group'' was used until about 1975 
to denote what we now generally call the special unitary group $\SU(n)$. There is some debate in general;
see the Wikipedia discussion relating to the page ``Unimodular matrix''; whether ``unimodular'' should pertain to 
$\det=1$ or to $\det =\pm1$. We will encounter the same ambiguity below when discussing Pin groups.
Because the use of ``unimodular unitary group'' to denote $\SU(n)$ was apparently abandoned after 1975,
we feel justified to reuse the term as proposed here.}
unitary group}. 

\end{defn}
Clearly, $\Un(n)^{\pm 1}$ contains $\SU(n)$ as a subgroup of index $2$.

With $\Mat_{n\times n}(\CC)$ the ring of $n\times n$ matrices over the complex numbers, one has obvious
inclusions
\[
\SU(n)\leqslant \Un(n)^{\pm1}\leqslant\Un(n)\leqslant \GL(n,\CC)\subset \Mat_{n\times n}(\CC)\,,
\]
where the first three inclusions are group homomorphisms, the last one an inclusion of multiplicative monoids
whose image is $\Mat_{n\times n}(\CC)^{*}=\GL(n,\CC)$, the group of (multiplicatively) invertible matrices.

\section{The Magic Square of Rotations and Reflections} \label{Sec:magicsquare}

\subsection*{Reflections, Take One}
Recall the following.
\begin{defn}
For a field $K$ of characteristic not $2$, and $V$ a finite dimensional vector space over it,
a $K$--linear endomorphism $s: V\to V$ is a \emph{reflection\/} (of order $2$)  if there exist a 
$K$--linear form $\lambda:V\to K$ and a vector $a\in V$ such that $\lambda(a)=1$ and 
$s(v)= v-2\lambda(v)a$, for all $v\in V$. 

In particular, the vector $a$ and the linear form $\lambda$ are not zero and 
the vector $a$ spans the eigenspace of $\vp$ corresponding to the eigenvalue $-1$, with 
$H=\Ker\lambda\subset V$ the complementary hyperplane that is left fixed pointwise by $s$, 
thus, is the eigenspace of $s$ corresponding to the eigenvalue $1$. This hyperplane is called the \emph{mirror\/}
of $s$.
\end{defn}

Complementing $a$ through a basis of $H$ to a basis of $V$, with $n=\dim_{K}V$ the 
endomorphism $s$ becomes represented by the $n\times n$ matrix $A\in \Mat_{n\times n}(K)$ of
the form
\begin{align*}
A =\begin{pmatrix}
-1\\
&1\\
&&\ddots\\
&&&1
\end{pmatrix}\,,
\end{align*}
the empty space to be filled with zeros. If $s'$ is a second reflection with $a'$ spanning the 
$(-1)$--eigenspace and $H'$ its mirror, then there are automorphisms $\vp$ of $V$ aplenty that 
send $a$ to $a'$ and $H$ to $H'$. Each such automorphism yields $s'= \vp s\vp^{-1}$, whence any 
two reflections are conjugate in $\GL(V)$.

\begin{example}
A matrix $A\in\Mat_{2\times 2}(K)$ represents a reflection if, and only if, its characteristic polynomial
is $\det(t\mathbf 1 - A) = t^{2} -1$, equivalently, $\tr A = 0$ and $\det A=-1$, where $\mathbf 1$
is the appropriate identity matrix and $\tr A\in K$  is the {\em trace\/} of $A$. 
\end{example}

\subsection*{The Magic Square}
We will establish the magic square from the introduction, here is a version with more detail labels for the bijections:
\begin{align*}
\xymatrix{
*++[F]\txt{Finite Subgroups\\
of $\SL(2,\CC)$\\ of even order\\
(up to conjugation)}&&&&&&
*++[F]\txt{Finite Reflection Subgroups\\
of $\GL(2,\CC)$ containing $-\mathbf 1$\\
(up to conjugation)}
\ar[llllll]+<55pt,0pt>_-{\small \txt{$- \,\cap\,\SL(2,\CC)$\\
1:1}}^-{\small\txt{Shephard--Todd (1954)\\
Gonzalez--Sprinberg and\\ Verdier (1983)\\
Kn¬\"orrer (1982)}}\\
\\
\\
*++[F]{\txt{Finite Subgroups\\
of $\SL(3,\RR)$\\
(up to conjugation)}} \ar@{-}[uuu]-<0pt,32pt>^-{\small\txt{1:1}}_-{\small\txt{ C. Jordan (1878)\\F. Klein (1883)}}&&&&&&
*++[F]{\txt{Finite Reflection Subgroups\\
of $\GL(3,\RR)$\\
(up to conjugation)}} 
\ar[llllll]+<55pt,0pt>_-{\small\txt{$- \,\cap\,\SL(3,\RR)$\\
1:1}}^-{\small\txt{H.M.S. Coxeter (1934)}}\ar@{-}[uuu]-<-0pt,26pt>^-{\small\txt{1:1}}_-{\small\txt{Bessis, Bonnaf\'e, \\ and  Rouquier (2002)}}
}
\end{align*}

\begin{remarks}
\begin{enumerate}[\rm(a)]
\item
Any finite (or, for that matter, any compact) subgroup in a Lie group is contained in a maximal compact subgroup. 
Thus, one may substitute $\SU(2)$ for $\SL(2,\CC)$ in the upper left box, $\Un(2)$ for $\GL(2,\CC)$ in the 
upper right box, or, going the other way, one may substitute $\SO(3)$ for $\SL(3,\RR)$ and  
$\Or(3)$ for $\GL(3,\RR)$, respectively, in the lower row.
\item The restriction to subgroups of even order in $\SL(2,\CC)$, respectively to
reflection groups in $\GL(2,\CC)$ containing $-\mathbf 1$ is not severe: Any finite subgroup of odd 
order in $\SL(2,\CC)$ is cyclic, and each such subgroup sits in a unique conjugacy class of reflection 
groups that do not contain $-\mathbf 1$. 
These are precisely the dihedral groups of type $\sfD_{2m+1}$, realized as reflection groups in 
$\GL(2,\CC)$; see also Prop.~\ref{Prop:dihedralodd} below.
\end{enumerate}
\end{remarks}

To fully appreciate the mathematics behind the magic square, recall the following classical facts.
\begin{theorem}[H.S.M.~Coxeter, {\cite[Theorem 18]{Cox1}}]
The inclusion of groups $j:\SO(3)\to \Or(3)$ sets up, via $j^{-1}(\sfG)= \sfG\cap \SO(3)$, 
a bijection between the conjugacy classes of finite reflection groups $\sfG\leqslant \Or(3)$ and the
conjugacy classes of finite subgroups of $\SO(3)$.
\end{theorem}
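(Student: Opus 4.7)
The plan is to combine the structural decomposition $\Or(3) \cong \SO(3) \times \{\pm \mathbf{1}\}$ with Goursat's lemma and Klein's classification of finite subgroups of $\SO(3)$, matching reflection groups to rotation subgroups case by case. The key structural observation is that $-\mathbf{1}$ is central in $\Or(3)$ with $\det(-\mathbf{1}) = (-1)^3 = -1$, so $\Or(3)$ is the internal direct product $\SO(3) \times \{\pm\mathbf{1}\}$, and conjugation in $\Or(3)$ consequently agrees with conjugation in $\SO(3)$. For any nontrivial finite reflection subgroup $\sfG \leqslant \Or(3)$ the determinant restricts to a surjection $\sfG \twoheadrightarrow \{\pm 1\}$, so $H_0 := \sfG \cap \SO(3) = \ker(\det|_\sfG)$ is normal of index $2$ in $\sfG$, and $[\sfG] \mapsto [H_0]$ descends to a well-defined map on conjugacy classes.

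To parametrize candidates for the inverse map, I would apply Goursat's lemma to $\SO(3) \times \{\pm\mathbf{1}\}$. A finite subgroup $\sfG \leqslant \Or(3)$ not contained in $\SO(3)$ falls into one of two types: either $-\mathbf{1} \in \sfG$, in which case $\sfG = H_0 \cup (-\mathbf{1})H_0$ depends only on $H_0$; or $-\mathbf{1} \notin \sfG$, in which case $\sfG$ is determined by an overgroup $H_0 \leqslant H \leqslant \SO(3)$ with $[H:H_0] = 2$ via $\sfG = H_0 \sqcup (-\mathbf{1})(H \setminus H_0)$. A short eigenvalue computation shows that $-h \in \sfG$ is a reflection in $\Or(3)$ if and only if $h$ is a half-turn in $\SO(3)$, so $\sfG$ is a reflection group exactly when these negatives of half-turns generate it.

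With Klein's classification of the conjugacy classes of finite subgroups of $\SO(3)$ --- namely $\mathsf C_n$ $(n\geqslant 1)$, $\mathsf D_{2n}$ $(n\geqslant 2)$, $\mathsf A_4$, $\mathsf S_4$, $\mathsf A_5$ --- one enumerates for each $H_0$ the admissible choices of $H$ and checks in each case which produces a reflection group. This case-by-case verification shows that for every $H_0$ there is exactly one such choice up to conjugacy, establishing surjectivity and injectivity simultaneously and yielding the bijection $\mathsf C_n \leftrightarrow I_2(n)$, $\mathsf D_{2n} \leftrightarrow A_1 \oplus I_2(n)$, and $\mathsf A_4, \mathsf S_4, \mathsf A_5 \leftrightarrow A_3, B_3, H_3$ (the respective full symmetry groups of the tetrahedron, octahedron, and icosahedron).

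I expect the main obstacle to be the dihedral case $H_0 = \mathsf D_{2n}$, whose analysis splits by the parity of $n$. For $n$ even, $\mathsf D_{2n}$ already contains the half-turn about its main axis, so the choice $-\mathbf{1} \in \sfG$ (equivalently $H = H_0$) supplies the horizontal mirror $\sigma_h$ needed to complete the prismatic reflection group; for $n$ odd this mirror must come instead from the index-$2$ overgroup $H = \mathsf D_{4n}$, which contributes the main-axis half-turn that $\mathsf D_{2n}$ itself lacks. Both parity paths yield the same prismatic group $A_1 \oplus I_2(n)$ up to conjugation. The tetrahedral case $H_0 = \mathsf A_4$ is similarly subtle: the relevant overgroup is $\mathsf S_4$, the octahedral rotation group, rather than $\mathsf A_4$ itself, reflecting the fact that the regular tetrahedron is not centrally symmetric.
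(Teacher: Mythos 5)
The paper does not prove this theorem: it states it as a classical result due to Coxeter \cite{Cox1} and points to \cite{Cox3, ConwaySmith} for detailed expositions, so there is no in-paper proof to compare against. Your blueprint is a correct way to prove it and is essentially the route those references take: exploit the internal direct product $\Or(3) = \SO(3) \times \{\pm\mathbf 1\}$ (valid precisely because $3$ is odd, so $\det(-\mathbf 1) = -1$), parametrize a finite $\sfG \leqslant \Or(3)$ not contained in $\SO(3)$ either by $H_0 = \sfG\cap\SO(3)$ alone (when $-\mathbf 1\in\sfG$) or by an index-$2$ pair $H_0 \leqslant H \leqslant \SO(3)$ (when $-\mathbf 1\notin\sfG$), note the criterion that $-h$ is a reflection iff $h$ is a half-turn, and then match against Klein's list. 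Your handling of the two genuinely delicate cases — the parity split for $H_0 = \mathsf D_n$ (prismatic group gets its horizontal mirror from the main-axis half-turn, which lies in $\mathsf D_n$ for $n$ even but must be borrowed from $\mathsf D_{2n}$ for $n$ odd) and the tetrahedral case $H_0 = \mathsf A_4$ needing the overgroup $\mathsf S_4$ because the tetrahedron is not centrally symmetric — is correct. I also checked: for $H_0 = \mathsf C_n$ the only admissible overgroup yielding a reflection group is the dihedral $\mathsf D_n$ (giving $I_2(n)$ as pyramidal group), not $\mathsf C_{2n}$; and for $\mathsf S_4, \mathsf A_5$ no proper overgroup exists, so $-\mathbf 1$ must lie in $\sfG$, which indeed produces $B_3, H_3$.

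Two small points worth tightening if you write this out. First, the boundary case $H_0 = \{1\}$: you must adopt the convention that the trivial group is not counted as a reflection group (equivalently, ``reflection group'' means generated by a nonempty set of reflections), since otherwise both $\{1\}$ and an order-$2$ group $\langle s\rangle$ intersect $\SO(3)$ trivially and injectivity fails. Second, invoking Goursat's lemma is more machinery than needed; since the second factor $\{\pm\mathbf 1\}$ is just $C_2$ and is central, the two-case dichotomy you describe is elementary and Goursat only obscures it. Neither of these is a gap in the argument, just presentation.
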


This yields the most elegant and efficient way to classify the finite subgroups of $\SO(3)$ by
first classifying the finite reflection groups in $\Or(3)$, a simple and very transparent task --- see
\cite{Cox3, ConwaySmith} for elegant detailed expositions.

\begin{sit}
Furthermore, two finite subgroups in $\SO(3)$ are conjugate if, and only if, 
they are isomorphic as abstract groups. The possible isomorphism types are:
\begin{enumerate}
\item the {\em cyclic groups\/} $\mathsf C_{n}$ of order $n\geqslant 1$,
\item the {\em dihedral groups\/} $\mathsf D_{n}$ of order $2n$, 
\item the {\em alternating group\/} $\mathsf A_{4}$ of order $12$, the rotational symmetries of the tetrahedron, 
\item the {\em symmetric group\/}  $\mathsf S_{4}$ of order $24$, the roational symmetries of the hexahedron(=cube) or octahedron, 
\item the {\em alternating group\/} $\mathsf A_{5}$ of order $60$, the rotational symmetries of the icosahedron or dodecahedron. 
\end{enumerate}

Regarding the corresponding finite reflection subgroups of $\Or(3)$, one just takes the full symmetry groups 
of the polyhedra with given rotational symmetries.
%: The cyclic group  
\end{sit}

\begin{sit}
The relationship between finite subgroups of $\SL(2,\CC)$, or its maximal compact subgroup $\SU(2)$ and
the finite subgroups of $\SO(3)$ is a consequence of Hamilton's discovery of the natural group homomorphism 
$\rho\colon \SU(2)\to \SO(3)$ that is surjective and has $\{\pm\mathbf 1\}$ as its kernel.

The next classical result then simply exploits the fact that $\SU(2)$, or $\SL(2,\CC)$ for that matter, contains
$-\mathbf 1$ as its sole element of order $2$. By Cauchy's theorem every finite group of even order has to 
contain an element of order $2$, whence the existence of Hamilton's homomorphism implies the following result.
\end{sit}

\begin{theorem}[C.~Jordan {\cite{Jordan}}, F.~Klein {\cite{KleinIcosaeder}}] 
\label{thm:JordanKlein}
The group homomorphism $\rho:\SU(2)\to \SO(3)$ sets up, via $\sfG\mapsto \rho^{-1}(\sfG)$, 
a bijection between the finite subgroups $\sfG\leqslant \SO(3)$ and the finite subgroups of $\SU(2)$
of {\em even\/} order, also called the {\em binary polyhedral groups}. 

Further, for every {\em odd\/} order, $\SU(2)$ contains a cyclic group of that order, mapped via $\rho$ 
isomorphically onto its image in $\SO(3)$.

It still holds for $\SU(2)$ that two finite subgroups are conjugate if, and only if, they
are isomorphic as abstract groups.
\end{theorem}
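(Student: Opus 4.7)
The plan is to leverage the surjective group homomorphism $\rho\colon \SU(2)\to\SO(3)$ with kernel $\{\pm\mathbf 1\}$ recalled in the paragraph preceding the statement, together with the already-cited classification of finite subgroups of $\SO(3)$ and the fact that two such subgroups are conjugate if and only if they are abstractly isomorphic.

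First I would verify that $-\mathbf 1$ is the unique element of order $2$ in $\SU(2)$: any $A\in\SU(2)$ is unitarily diagonalizable with eigenvalues $\lambda,\lambda^{-1}$ on the unit circle, so $A^{2}=\mathbf 1$ with $A\neq \mathbf 1$ forces both eigenvalues to be $-1$, hence $A=-\mathbf 1$. Combined with Cauchy's theorem, this shows that every finite subgroup $\tilde \sfG\leqslant\SU(2)$ of even order contains $-\mathbf 1$, so $\Ker\rho\subseteq\tilde\sfG$ and therefore $\tilde\sfG=\rho^{-1}(\rho(\tilde\sfG))$. Conversely, for any finite $\sfG\leqslant \SO(3)$, the preimage $\rho^{-1}(\sfG)$ has order $2|\sfG|$, which is even. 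The two constructions $\sfG\mapsto \rho^{-1}(\sfG)$ and $\tilde\sfG\mapsto \rho(\tilde\sfG)$ are thus mutually inverse, giving the claimed bijection.

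For the odd-order assertion, I would argue that if $\tilde\sfG\leqslant \SU(2)$ has odd order, then $-\mathbf 1\notin\tilde\sfG$, so $\rho|_{\tilde\sfG}$ has trivial kernel and embeds $\tilde\sfG$ isomorphically into $\SO(3)$; its image is a finite subgroup of $\SO(3)$ of odd order, and inspecting the list of possible isomorphism types one sees that only cyclic groups remain, so $\tilde\sfG$ is cyclic. Existence, for every $n\geqslant 1$, of a cyclic subgroup of $\SU(2)$ of order $n$ is immediate from $\diag(\zeta,\zeta^{-1})\in\SU(2)$ with $\zeta=e^{2\pi i/n}$.

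It then remains to transport the ``conjugate iff isomorphic'' criterion from $\SO(3)$ up to $\SU(2)$, and this is the step I expect to be the main subtlety. For even-order subgroups $\tilde\sfG_{1},\tilde\sfG_{2}\leqslant \SU(2)$, any abstract isomorphism between them must send the unique element of order $2$ to the unique element of order $2$, i.e.\ fix $-\mathbf 1$, and therefore descends to an isomorphism of the quotients $\rho(\tilde\sfG_{i})\leqslant \SO(3)$; by the $\SO(3)$ criterion these quotients are conjugate, and any conjugator in $\SO(3)$ lifts, by surjectivity of $\rho$, to a conjugator in $\SU(2)$ that takes $\tilde\sfG_{1}=\rho^{-1}(\rho(\tilde\sfG_{1}))$ to $\tilde\sfG_{2}$. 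For odd-order, hence cyclic, subgroups of the same order $n$, I would use that any cyclic subgroup of $\SU(2)$ is simultaneously unitarily diagonalizable and that the diagonal cyclic group $\{\diag(\zeta^{k},\zeta^{-k})\mid k\in\ZZ\}$ depends only on $n$, not on the choice of primitive $n$-th root of unity (the two candidate forms being swapped by the antidiagonal element of $\SU(2)$), so any two cyclic subgroups of the same order are conjugate.
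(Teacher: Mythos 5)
Your proposal is correct and fleshes out exactly the argument the paper sketches in the paragraph preceding the theorem: $\rho$ is surjective with kernel $\{\pm\mathbf 1\}$, $-\mathbf 1$ is the unique element of order $2$ in $\SU(2)$, and Cauchy's theorem then forces every finite even-order subgroup to contain the kernel, making $\sfG\mapsto\rho^{-1}(\sfG)$ and $\tilde\sfG\mapsto\rho(\tilde\sfG)$ mutually inverse. The paper itself leaves the odd-order and conjugacy assertions unproved (they are stated as part of the classical result with citations), and your treatment of them is sound; the only place worth a sentence of care is the odd-order conjugacy step, where unitary diagonalization a priori produces a conjugator in $\Un(2)$ rather than $\SU(2)$ --- but since rescaling the conjugator by a scalar $c$ with $c^{2}=\det(U)^{-1}$ puts it in $\SU(2)$ without changing the induced conjugation, the argument goes through.
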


\begin{remark}
Indeed, C.~Jordan uses this result to classify the finite subgroups of $\SO(3)$ by first classifying the subgroups of
$\SU(2)$ that contain $-\mathbf 1$. To achieve the latter task, he uses the representation theory of Lie groups,
more precisely, the structure of stabilizers and normalizers of regular elements in such --- 
long before these notions were around. For a nice account of Jordan's approach, see \cite[\S 59]{Blichfeldt}.

F.~Klein goes the other way: he first famously classifies the finite subgroups in $\SO(3)$ as the groups of
rotational symmetries of polyhedra in $\RR^{3}$; see \cite[Ch.~19]{Armstrong} for a textbook account; and 
then lifts those groups to find all finite subgroups of $\SU(2)$.
\end{remark}

Our aim here is to make the remaining correspondences explicit in the setting of the above diagram, 
and to do so, we will employ (s)pinor geometry and classical results on the structure of the Clifford algebra of 
Euclidean space $\RR^{3}$. First recall the following:
%%%%%%%
for $n\geqslant 1$ an integer, 
\[
\Un(n)^{\pm 1} =
\{A\in \GL(n,\CC)\mid A\cdot \overline A^{T} = \mathbf 1, \ \det A = \pm 1\}\leqslant \Un(n) \leqslant\GL(n,\CC)\,,
\] 
is the unimodular unitary group. The various Pin groups for $\RR^3$ will be defined and discussed in Section \ref{Sec:Pin3}, and for general Clifford algebras in Section \ref{Sec:CliffordGeneral}.
%As we will explain below, for $n=2$, the group $\Un(2)^{\pm 1}$ can be identified with 

%%%%%%%

Our main contribution is then the following identification, whose proof will be achieved in the following sections. 
\begin{theorem} \label{Thm:main}
\begin{enumerate}[\rm(1)]
\item The group $\Un(2)^{\pm 1}$ identifies naturally with the 
(geometric or big) {\em Pin group\/} $\PIN^{+}(3)$, associated to the standard Euclidean structure on 
$\RR^{3}$.
\item Hamilton's group homomorphism $\rho\colon \SU(2)\to \SO(3)$
extends via the twisted adjoint representation of $\PIN^{+}(3)$,
introduced by Atiyah, Bott, and Shapiro in \cite{ABS}, to a surjective group homomorphism
$\widetilde\rho\colon \Un(2)^{\pm 1}\to \Or(3)$ with kernel again equal to $\{\pm\mathbf 1\}$.
\item A matrix $A\in \Un(2)^{\pm 1}\leqslant\Mat_{2\times 2}(\CC)$ represents a (complex) reflection if, and only if,
$\widetilde\rho(A)$ represents a (real) reflection in $\Or(3)$.
\item The group $\Un(2)^{\pm 1}$ is generated by the complex reflections it contains.
\end{enumerate}
\end{theorem}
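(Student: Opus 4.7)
The plan is to work entirely inside $\Mat_{2\times 2}(\CC)$, identified with the Clifford algebra $\Cliff(\RR^3,q)$ of standard Euclidean $3$-space via the Pauli isomorphism $e_i\mapsto \sigma_i$ ($i=1,2,3$). Under it, a vector $v\in\RR^3$ becomes a traceless Hermitian matrix with $\det v=-q(v)$, so the unit sphere $S^2\subset\RR^3$ is realized as the set of traceless Hermitian $2\times 2$ matrices of determinant $-1$, i.e.\ as Hermitian involutions of trace $0$. In particular every unit vector of $\RR^3$, viewed as a matrix, is unitary of determinant $-1$ and lies in $\Un(2)^{\pm 1}$.

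For part (1), $\PIN^{+}(3)$ is by definition the subgroup of the unit group of the Clifford algebra generated by the unit vectors (as clarified in Section~\ref{Sec:Pin3}), so the above gives $\PIN^{+}(3)\subseteq \Un(2)^{\pm 1}$. For the reverse inclusion, I would show that both groups fit into the same short exact sequence
\[
1\to\{\pm\mathbf 1\}\to \Un(2)^{\pm 1}\xrightarrow{\widetilde\rho}\Or(3)\to 1,
\]
where $\widetilde\rho$ is the Atiyah--Bott--Shapiro twisted adjoint representation $x\mapsto(v\mapsto\alpha(x)vx^{-1})$. The key computations are: (i) for $A\in\Un(2)^{\pm 1}$ the twisted conjugation preserves the subspace of traceless Hermitian matrices and the form $-\det$, so lands in $\Or(3)$; (ii) the kernel consists of scalars $\lambda\in\CC$ with $\lambda\bar\lambda=1$ and $\lambda^2=\pm 1$, hence equals $\{\pm\mathbf 1\}$; (iii) a unit vector $v\in S^2$ maps to the reflection in $v^\perp$, and by Cartan--Dieudonn\'e these reflections generate $\Or(3)$. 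Since $\PIN^{+}(3)$ fits the identical covering data, the two groups coincide.

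Parts (2) and (3) are then essentially corollaries. For (2), the map $\widetilde\rho$ constructed above is by definition the twisted adjoint representation, and its restriction to $\Spin(3)=\SU(2)$ is Hamilton's classical $\rho$. For (3), a complex reflection $A\in\Un(2)^{\pm 1}$ has $\tr A=0$, $\det A=-1$ and eigenvalues $\pm 1$; unitarity forces orthogonal eigenspaces, so $A$ is Hermitian, hence a unit vector of $\RR^3$ under the Pauli identification, and $\widetilde\rho(A)$ is the Euclidean reflection in the mirror $v^\perp$. Conversely, if $\widetilde\rho(A)$ is a reflection in $\Or(3)$ then $\widetilde\rho(A)=\widetilde\rho(v)$ for some unit $v\in S^2$, so $A=\pm v$ in $\Mat_{2\times 2}(\CC)$ by part~(1), and both $\pm v$ are traceless Hermitian of determinant $-1$, hence complex reflections.

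For (4), Cartan--Dieudonn\'e writes any $g\in\Or(3)$ as a product of at most three reflections; lifting each factor through (3) yields $R_1\cdots R_k\in\Un(2)^{\pm 1}$ with $\widetilde\rho(R_1\cdots R_k)=g$, so every preimage $A$ of $g$ equals $\pm R_1\cdots R_k$. Since $-\mathbf 1$ itself factors as $R\cdot(-R)$ for any complex reflection $R\in\Un(2)^{\pm 1}$ (with $-R$ again a complex reflection of the same type), every element of $\Un(2)^{\pm 1}$ is a product of complex reflections. The main obstacle is the equality in part~(1): matching $\PIN^{+}(3)$ with $\Un(2)^{\pm 1}$ requires careful tracking of the many sign and determinant conventions in the Pin-group literature (as the paper explicitly flags), and in particular verifying that the \emph{twisted} adjoint representation, rather than the ordinary one, is what lifts to a surjection onto $\Or(3)$ with kernel exactly $\{\pm\mathbf 1\}$.
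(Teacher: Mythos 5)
Your outline of parts (2)--(4) is essentially sound and tracks the paper's argument closely: characterize the complex reflections in $\Un(2)^{\pm1}$ as the traceless Hermitian unitaries of determinant $-1$, identify them with the unit vectors under the Pauli isomorphism, show $\widetilde\rho$ sends each such $v$ to the Euclidean reflection $s_v$, and then invoke Cartan--Dieudonn\'e. This is the same route the paper takes in Section~\ref{Sec:Reflections}, and your observation that $-\mathbf 1 = R\cdot(-R)$ neatly disposes of the generation claim.

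The genuine gap is in part~(1), where you assert that ``$\PIN^{+}(3)$ is by definition the subgroup of the unit group of the Clifford algebra generated by the unit vectors (as clarified in Section~\ref{Sec:Pin3}).'' That is \emph{not} the definition the paper uses. The paper defines $\PIN^{+}(3)=\ker\bigl(N^{2}\colon \Cl^{*}_{hom}\to\RR^{*}\bigr)$, where $N(x)=\overline{x}\,x$ is the Clifford norm --- and indeed the whole point of Section~\ref{Sec:Pin3} is to compare three competing norm-kernel definitions. The equivalence between the norm-kernel Pin group and the group generated by unit vectors is precisely the non-trivial content one must establish, not a definitional convenience; assuming it makes your proof of part~(1) circular (it would effectively presuppose part~(4) and the surjectivity of $\widetilde\rho$). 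The paper's actual proof of part~(1) is more direct: it computes $\Cl^{*}_{hom}=\lambda(\HH^{*})\cup\lambda(\HH^{*})i$, identifies $N$ with $\det$ on $\lambda(\HH^{*})$ and with $-\det$ on $\lambda(\HH^{*})i$, and concludes $\ker(N^{2})=N^{-1}(\{\pm1\})=\lambda(\HH_{1})\cup\lambda(\HH_{1})i=\SU(2)\cup\SU(2)i=\Un(2)^{\pm1}$. That one-line computation is the bridge your argument is missing. A secondary slip: your kernel argument records only the conditions $\lambda\bar\lambda=1$ and $\lambda^{2}=\pm1$, which do not exclude $\lambda=\pm i$; you need to additionally observe that $\widetilde\rho(i\mathbf 1)=-\mathrm{id}\neq\mathrm{id}$, i.e.\ only $\lambda^{2}=1$ gives kernel elements, so that the kernel is indeed $\{\pm\mathbf 1\}$.
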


Restating this slightly, the backbone of the magic square above is the following statement that is, in essence, 
a reformulation of some results in \cite{BBR} --- and with proper reading one may argue that it is already 
contained in \cite{Cox3}.
\begin{theorem}
\label{thm:main}
There is a commutative diagram of exact sequences of groups
\[ \xymatrix{  
&1\ar[d]&1\ar[d]\\
&\{\pm\mathbf 1\}\ar@{=}[r]\ar[d]&\{\pm\mathbf 1\}\ar[d]\\
1\ar[r]&\SU(2)\ar[r]^-{i}\ar[d]_{\rho}&\Un(2)^{\pm1}\ar[r]^-{\det}\ar[d]_{\widetilde\rho}&\{\pm1\}\ar@{=}[d]\ar[r]&1\\
1\ar[r]&\SO(3)\ar[r]^-{j}\ar[d]&\Or(3)\ar[r]^-{\det}\ar[d]&\{\pm1\}\ar[r]&1 \\
&1&1\\ 
} \label{Diag:squareGroups}  \tag{$*$}
\]
where $i,j$ are the obvious inclusions, $\rho\colon \SU(2)\to \SO(3)$ is the universal covering of $\SO(3)$, 
the map famously discovered by Hamilton in terms of quaternions, and $\widetilde\rho$ identifies with 
the canonical covering of $\Or(3)$ by the Pin group.

Further, the two--dimensional complex representation afforded by the inclusion $\Un(2)^{\pm1}\leqslant \GL(2)$ 
and the three--dimensional representation of that group afforded by $\widetilde\rho$ are related in that 
$x\in\Un(2)^{\pm1}$ is a (complex) reflection of order $2$ in $\GL(2,\CC)$ if, and only if, $\widetilde\rho(x)$ 
is a (real) reflection in $\Or(3)$.
\end{theorem}

For the proof of these claims see below until Section \ref{Sec:Reflections}. 

\begin{remark}
The horizontal short exact sequence in the middle of above diagram could alternatively also be written as
\begin{align*}
\xymatrix{
1\ar[r]&\Spin(3)\ar[r]^-{i}&\PIN^{+}(3)\ar[r]^-{\det}&\{\pm1\}\ar[r]&1\\
}
\end{align*}
as we explain below.
\end{remark}

The group homomorphisms $\rho,\widetilde\rho$ of \eqref{Diag:squareGroups} can be defined, somewhat ad hoc, as follows.
\begin{sit}
Consider in $\Mat_{2\times 2}(\CC)$ the subspace
\begin{align*}
\su(2) =\left\{B\in \Mat_{2\times 2}(\CC)\mid {\overline{B}}^{T} = -B\,, \tr B=0
\right\}\,.
\end{align*}

In more fancy terms, $\su(2)$ consists of the {\em skew--Hermitian\/} (or {\em antihermitian}), {\em traceless\/} 
$2\times 2$ complex matrices. Explicitly, if $B$ is a matrix in $\su(2)$, then there are real numbers 
$\beta,\gamma,\delta\in \RR$ such that
\[
B= \begin{pmatrix}
\beta i & -\gamma -\delta i\\
\gamma -\delta i& -\beta i
\end{pmatrix}\,.
\]
Note that $\det B = \beta^{2}+\gamma^{2}+\delta^{2}\in \RR_{\geqslant 0}$ is the squared length of the
vector $(\beta,\gamma,\delta)\in\RR^{3}$.
In particular, $\su(2)$ is a $3$--dimensional real vector space and taking determinants yields the
standard Euclidean square norm on it.
\end{sit}

\begin{remark}
The reason for the notation is that $\su(2)$ is a (real) Lie algebra with respect to the usual commutator,
$[B,B'] =BB'-B'B$, 
%because 
%\begin{align*}
%[B,B']^{T}&= (BB'-B'B)^{T}= (B')^{T}B^{T}-B^{T}(B')^{T} = B'B-BB' = -[B,B']
%\end{align*}
%
and as such is the Lie algebra of the simply connected Lie group $\SU(2)$.
However, we will not need this additional insight here.
\end{remark}

\begin{sit}
If now $A\in \Un(2)^{\pm1}$ and $B\in\su(2)$, define
\begin{align*}
\widetilde\rho(A)(B) &= (\det A)ABA^{-1}\in \Mat_{2\times 2}(\CC)\,.
\end{align*}
It is not too painful to verify directly the following facts:
\begin{enumerate}[leftmargin=*, label=(\alph*)]
\item $\widetilde\rho(A)(B)\in\su(2)$ along with $B$, thus, $\widetilde\rho(A)$ is an $\RR$--linear automorphism 
of $\su(2)$, and 
\item $\widetilde\rho$ defines a group homomorphism from $\Un(2)^{\pm1}$ to 
$\Or(3)\leqslant \Aut_{\RR}(\su(2))\cong \GL(3,\RR)$. Indeed, $\det(\widetilde\rho(A)(B)) = \det(A)^2 \det B=  \det B$,
whence the Euclidean norm, given by $\det$ on $\su(2)$ is preserved.
\item The restriction of $\widetilde\rho$ to $\SU(2)$ yields $\rho:\SU(2)\to \SO(3)$. This group homomorphism
is in fact the {\em adjoint representation\/} of the Lie group $\SU(2)$ on its Lie algebra $\su(2)$, 
in that $\rho(A)(B) =ABA^{-1}$ for $A\in \SU(2), B\in \su(2)$.

From this point of view, the map $\widetilde \rho$ is the {\em twisted adjoint representation\/} of 
$\Un(2)^{\pm1}$ on its Lie algebra that happens to coincide with $\su(2)$, as $\SU(2)$ is the connected 
component of the identity element $\mathbf 1\in \Un(2)^{\pm1}$ in that Lie group. The twist is by the 
determinant character of $\Un(2)^{\pm 1}$ as the explicit form of the action shows.
\end{enumerate}
This defines the group homomorphisms $\rho,\widetilde \rho$ in Theorem \ref{thm:main}, but it does not yet
explain their surjectivity nor their kernels.
\end{sit}

Below we will put all this into its proper context of (s)pinor geometry.

These classical results, are ``complexified'' by Theorem \ref{thm:main}.
First, one can lift Coxeter's classification:

\begin{proposition}
The inclusion of groups $i:\SU(2)\to \Un(2)^{\pm 1}$ sets up, via $i^{-1}(\sfG)= \sfG\cap \SU(2)$, 
a bijection between the conjugacy classes of finite complex reflection groups 
$\sfG\leqslant \Un(2)^{\pm 1}\leqslant \GL(2,\CC)$ and the
conjugacy classes of finite subgroups of $\SU(2)$.
\end{proposition}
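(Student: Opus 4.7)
The plan is to split the claim into two cases based on the parity of $|\sfH|$, and handle each via already-established bijections. By Cauchy's theorem, $|\sfH|$ is even if and only if $\sfH$ contains $-\mathbf{1}$, the unique element of order $2$ in $\SU(2)$; correspondingly, $-\mathbf{1}\in \sfG$ if and only if $\sfG\cap\SU(2)$ has even order, since $\ker\widetilde\rho=\{\pm\mathbf{1}\}\subset\SU(2)$.

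For the even-order case, I would realize $\sfG\mapsto \sfG\cap\SU(2)$ as the composition of three bijections drawn from the magic square. First, $\sfG\mapsto\widetilde\rho(\sfG)$ is a bijection between finite complex reflection groups in $\Un(2)^{\pm1}$ containing $-\mathbf{1}$ and finite real reflection groups in $\Or(3)$: injectivity uses $\sfG=\widetilde\rho^{-1}(\widetilde\rho(\sfG))$ (because $\ker\widetilde\rho\subset\sfG$), and the image is a reflection group by part~(3) of the preceding main theorem, which says that complex reflections in $\Un(2)^{\pm1}$ correspond bijectively under $\widetilde\rho$ to real reflections in $\Or(3)$; surjectivity follows by applying the same observation to $\widetilde\rho^{-1}(\widetilde{\sfG})$, with $-\mathbf{1}=(-A_i)A_i$ expressing the kernel as a product of complex reflections. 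Second, Coxeter's bijection takes $\widetilde{\sfG}\mapsto\widetilde{\sfG}\cap\SO(3)$. Third, Jordan--Klein (Theorem~\ref{thm:JordanKlein}) takes $\sfK\mapsto\rho^{-1}(\sfK)$. A short diagram chase on the commutative square of Theorem~\ref{thm:main} verifies that the composite equals $\sfG\mapsto\sfG\cap\SU(2)$: for $x\in\SU(2)$ one has $\rho(x)=\widetilde\rho(x)$, so $x\in\rho^{-1}(\widetilde\rho(\sfG)\cap\SO(3))$ iff $\widetilde\rho(x)\in\widetilde\rho(\sfG)$ iff $x\in\widetilde\rho^{-1}(\widetilde\rho(\sfG))=\sfG$.

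For the odd-order case, Theorem~\ref{thm:JordanKlein} forces $\sfH$ to be cyclic, and the matching $\sfG$ must not contain $-\mathbf{1}$ (otherwise $|\sfG\cap\SU(2)|$ would be even). Such $\sfG$'s are catalogued by Proposition~\ref{Prop:dihedralodd} as, up to conjugation, the dihedral reflection groups $\sfD_{2m+1}$ in $\GL(2,\CC)$ (which by Weyl averaging can be placed inside $\Un(2)^{\pm1}$), and for each such $\sfG$ the intersection $\sfG\cap\SU(2)$ is the cyclic rotation subgroup of order $2m+1$, matching the unique conjugacy class of odd cyclic subgroup of $\SU(2)$ of that order.

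The main obstacle I expect is tracking conjugacy consistently across the three bijections. On the $\GL(2,\CC)$ side, a Weyl averaging / Hermitian adjustment argument lets one take a conjugating matrix inside $\Un(2)^{\pm1}$; conjugation inside $\Un(2)^{\pm1}$ then matches conjugation inside $\Or(3)$ via surjectivity of $\widetilde\rho$ (used to lift conjugators), and Coxeter's and Jordan--Klein's bijections already respect conjugacy. The cleanest way to package this is probably to set up the bijections between the boxes of the magic square diagram first at the level of groups and then check compatibility with the conjugation action, rather than proving conjugacy-invariance separately at each stage.
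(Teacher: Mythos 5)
The paper does not give an explicit proof of this Proposition: it is presented as a consequence of the magic square, i.e.\ of Theorem~\ref{thm:main}, Coxeter's theorem, Theorem~\ref{thm:JordanKlein}, Proposition~\ref{Prop:dihedralodd}, and Lemma~\ref{Lem:U2conjugate}. Your proposal reconstructs exactly this intended argument --- composing the three bijections around the square for groups containing $-\mathbf 1$, verifying via the commutative diagram that the composite is $\sfG\mapsto\sfG\cap\SU(2)$, treating the odd-order/dihedral $\sfD_{2m+1}$ case separately, and handling conjugacy via Lemma~\ref{Lem:U2conjugate}-style Hermitian averaging and lifting of conjugators through $\widetilde\rho$ --- so it is correct and matches the paper's approach.
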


\begin{remark}
Any finite subgroup of $\GL(2,\CC)$ that is generated by reflections of order $2$
is conjugate to such a reflection group in $ \Un(2)^{\pm 1}$ and two such groups are conjugate in 
$ \Un(2)^{\pm 1}$ if, and only if, they are conjugate inside $\GL(2,\CC)$, see Lemma \ref{Lem:U2conjugate} below.

Thus, up to conjugation, this result classifies the finite subgroups in $\GL(2,\CC)$ generated by reflections 
of order $2$, and so recoups that part of the classification by Shephard--Todd \cite{STo} in a conceptual way.
\end{remark}

\begin{lemma} \label{Lem:U2conjugate}
Let $\sfG, \sfG' \subseteq \Un(2)^{\pm 1}$ be finite subgroups. Then $\sfG$ and $\sfG'$ are conjugate in $\Un(2)^{\pm 1}$ if and only if they are conjugate in $\GL(2,\CC)$.
\end{lemma}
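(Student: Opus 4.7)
The forward implication is immediate, since $\Un(2)^{\pm 1}\leqslant\GL(2,\CC)$. For the converse, the plan is: given $g \in \GL(2,\CC)$ with $g\sfG g^{-1} = \sfG'$, to replace $g$ by an element of $\Un(2)^{\pm 1}$ enacting the same conjugation. The standard device for pushing a conjugator into the unitary group is polar decomposition, and it applies cleanly here because both $\sfG$ and $\sfG'$ already preserve the standard Hermitian form on $\CC^{2}$.

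First I would write the polar decomposition $g = up$ with $u \in \Un(2)$ and $p = (g^{\ast}g)^{1/2}$ positive definite Hermitian. For every $A \in \sfG$ the element $gAg^{-1}$ lies in $\sfG' \leqslant \Un(2)$, so $(gAg^{-1})^{\ast}(gAg^{-1}) = \mathbf 1$. Using $A^{\ast}=A^{-1}$ (since $A$ is unitary too) and rearranging yields $A^{-1}p^{2}A = p^{2}$, so $p^{2}$ commutes with each $A \in \sfG$. Now $ApA^{-1}$ is again positive definite Hermitian (Hermiticity uses $A^{\ast}=A^{-1}$; positive definiteness is preserved by conjugation) and satisfies $(ApA^{-1})^{2} = Ap^{2}A^{-1} = p^{2}$; by uniqueness of the positive definite square root, $ApA^{-1}=p$. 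Thus $p$ commutes with $\sfG$, and
\[
u\sfG u^{-1} \;=\; u p\,\sfG\, p^{-1}u^{-1} \;=\; g\sfG g^{-1} \;=\; \sfG'.
\]
So $u \in \Un(2)$ already enacts the conjugation.

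To land inside $\Un(2)^{\pm 1}$, I would finally correct by a scalar. Since $|\det u|=1$, one can choose $\lambda \in \CC^{\ast}$ with $|\lambda|=1$ and $\lambda^{2}\det u \in \{\pm 1\}$ (take a square root of $\pm\overline{\det u}$). Then $\lambda u \in \Un(2)^{\pm 1}$, and because $\lambda$ is central, $(\lambda u)\sfG(\lambda u)^{-1}=u\sfG u^{-1}=\sfG'$.

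The one non-formal step is the passage from ``$p^{2}$ commutes with $\sfG$'' to ``$p$ commutes with $\sfG$'', which relies on $p$ being the \emph{unique} positive definite square root of $p^{2}$ rather than a merely algebraic one; this is the closest thing to an obstacle. Everything else --- the polar decomposition, the manipulation with Hermitian conjugates, and the scalar rescaling to enforce $\det=\pm 1$ --- is routine.
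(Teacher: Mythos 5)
Your proof is correct. The overall strategy is the same as the paper's: factor the conjugator $g$ so as to peel off a unitary part, and then show the remaining positive-definite factor centralizes $\sfG$. The paper uses the singular value decomposition $g=UDV^{-1}$ and, after reducing to conjugation by the diagonal matrix $D$, verifies by an explicit $2\times2$ matrix computation (using the special shape of elements of $\Un(2)^{\pm1}$) that $D$ must commute with the group. You instead use the polar decomposition $g=up$ and prove the key commutation abstractly: from $A^{-1}p^{2}A=p^{2}$ you deduce $ApA^{-1}=p$ by the uniqueness of the positive-definite square root. That is the genuine point of divergence, and it is a real improvement in economy and scope: your argument avoids any coordinate computation, works verbatim in $\Un(n)$ for arbitrary $n$, and does not even use finiteness of $\sfG,\sfG'$ beyond the fact that they lie in the unitary group. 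Your final scalar rescaling to force $\det\in\{\pm1\}$ is also cleaner than the paper's determinant normalization inside the SVD. One stylistic remark: after you have $u\in\Un(2)$ conjugating $\sfG$ to $\sfG'$, you could alternatively observe that the resulting statement (conjugacy in $\Un(2)$) already suffices for the paper's purposes, but your rescaling step makes the lemma exactly as stated, so keep it.
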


\begin{proof}
If $\sfG, \sfG'$ are conjugate in $\Un(2)^{\pm 1}$, then they are clearly conjugate in $\GL(2,\CC)$. On the other hand, assume that $\sfG, \sfG'$ are conjugate in $\GL(2,\CC)$. 

Note that one can decompose any $g \in \GL(n,\CC)$ as $g=UDV^{-1}$, where $U, V \subseteq \Un(n)$ and $D$ is a diagonal matrix in $\GL(n,\RR)$ with entries in $\RR_{>0}$ (Singular Value Decomposition). However, given such a decomposition, we may first replace the matrix $U$ in this decomposition by $\frac{1}{\sqrt[n]{\det(U)}}U$, where $\sqrt[n]{\det U}$ is any of the $n$-th roots of the determinant (and similarly for $V$), to replace $g$ by a matrix with determinant equal to $1$ and in whose singular value decomposition the matrices $U,V$ are from the special unitary group. \\
Returning to the case at hand, $n=2$, we assume that $\sfG, \sfG'$ are conjugate in $\GL(2,\CC)$. That is, we may find a $g \in \GL(2,\CC)$ such that $\sfG=g\sfG'g^{-1}$. By the above, write $g=UDV^{-1}$ with $U, V \in \SU(2,\CC)$. Now the groups $\sfG_1:=U^{-1}\sfG U$ and $\sfG_2:=V^{-1}\sfG'V$ are still in $\Un(2)^{\pm1}$ and conjugate: $\sfG_1=D\sfG_2D^{-1}$, where $D=\begin{pmatrix} d_1 & 0 \\ 0 & d_2 \end{pmatrix}$ with $d_i >0$. Now any matrix $X$ in $\sfG_i$ is either of the form $\begin{pmatrix} \alpha & \beta \\ -\bar{\beta} & \bar{\alpha} \end{pmatrix}$ or $\begin{pmatrix} \alpha & \beta \\ \bar{\beta} & -\bar{\alpha} \end{pmatrix}$ for $\alpha, \beta \in \CC$ with $\alpha \bar{\alpha} + \beta \bar{\beta} = 1$. For any such $X \in \sfG_2$ of the first type one calculates
\[ DXD^{-1}= \begin{pmatrix} d_1 & 0 \\ 0 & d_2 \end{pmatrix} \begin{pmatrix} \alpha & \beta \\ -\bar{\beta} & \bar{\alpha} \end{pmatrix} \begin{pmatrix} \frac{1}{d_1} & 0 \\ 0 & \frac{1}{d_2} \end{pmatrix} = \begin{pmatrix} \alpha & \frac{d_1}{d_2}\beta \\ - \frac{d_2}{d_1} \bar{\beta} & \bar{\alpha} \end{pmatrix} \ , \]
which is in $\SU(2)$ if and only if, $\beta=0$, or $\frac{d_1}{d_2}=\frac{d_2}{d_1}$. In either case, $D$ commutes with $X$, thus, $D$ centralizes the group $\sfG_2$ (similarly for $X$ of the second type). This shows that $\sfG_1$ equals $\sfG_2$ , and hence the original groups $\sfG$ and $\sfG'$ have to be conjugate in $\Un(2)^{\pm1}$.
\end{proof}

\begin{sit} {\bf Historical Note.} 
It has been observed earlier by several authors that there is a strong relationship
between finite subgroups of $\SU(2)$ and complex reflection groups; see, for example, 
\cite{SprinbergVerdier, KnoerrerGroups}.
In those references, a reflection group $\widetilde\sfG\subseteq \GL(2,\CC)$ that contains a given finite
subgroup $\sfG\leqslant \SU(2)$ as subgroup of index $2$ was found by ``table look--up'': 
Shephard and Todd \cite{STo} compiled, in 1954, tables of all (conjugacy classes of)
finite groups generated by complex pseudo--reflections, and  this happens to imply the bijection between (conjugacy classes of) finite reflection 
groups in $\GL(2,\CC)$ and the (conjugacy classes of) binary polyhedral groups, the finite subgroups of $\SU(2)$.\\
In particular, the bijective correspondence at the right of the magic square, between finite reflection groups in $\GL(2,\CC)$
containing $-\mathbf 1$ and finite reflection groups in $\Or(3)$ remained mysterious for a long time.
Only in 2002, Bessis, Bonnaf\'e, and Rouquier \cite{BBR} explained this correspondence conceptually ---
not only for the case considered here, but with results for all finite subgroups of $\GL(n,\CC)$
generated by pseudo--reflections.
\end{sit}

The next result complexifies Theorem \ref{thm:JordanKlein}.

\begin{proposition} \label{Prop:dihedralodd}
The group homomorphism $\widetilde\rho:\Un(2)^{\pm1}\to \Or(3)$ sets up, via 
$\sfG\mapsto \widetilde\rho^{-1}(\sfG)$, 
a bijection between the finite subgroups $\sfG\leqslant \Or(3)$ generated by reflections
and the finite subgroups of $\Un(2)^{\pm1}$ that contain $-\mathbf 1$ and are generated by reflections.

Further, the finite subgroups of $\Un(2)^{\pm1}$ that do not contain $-\mathbf 1$ but are generated by 
reflections are the dihedral groups $\sfD_{2m+1}$, for $m\geqslant 0$, and those are 
mapped isomorphically via $\widetilde\rho$ onto their images in $\Or(3)$.
\end{proposition}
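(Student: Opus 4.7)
The plan is to leverage two consequences of Theorem~\ref{thm:main}: $\widetilde\rho\colon\Un(2)^{\pm1}\to\Or(3)$ is surjective with kernel $\{\pm\mathbf{1}\}$, and $A\in\Un(2)^{\pm1}$ is a reflection if and only if $\widetilde\rho(A)$ is a reflection in $\Or(3)$. For the bijection, given a finite reflection subgroup $H=\langle r_1,\dots,r_k\rangle\leqslant\Or(3)$, I would pick lifts $\widetilde r_i\in\Un(2)^{\pm1}$, each a reflection of order~$2$ by Theorem~\ref{thm:main}(3), and verify that $\widetilde\rho^{-1}(H)=\langle\pm\widetilde r_1,\dots,\pm\widetilde r_k\rangle$: this subgroup surjects onto $H$ and contains $-\mathbf 1=\widetilde r_i\cdot(-\widetilde r_i)$, hence contains the kernel $\{\pm\mathbf 1\}$ and is therefore the entire preimage. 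Conversely, if $\widetilde\sfG\leqslant\Un(2)^{\pm1}$ is finite, generated by reflections, and contains $-\mathbf 1$, then $\widetilde\rho(\widetilde\sfG)\leqslant\Or(3)$ is a finite reflection group (again by Theorem~\ref{thm:main}(3)), and $\widetilde\rho^{-1}(\widetilde\rho(\widetilde\sfG))=\widetilde\sfG$ because $\widetilde\sfG$ already contains the kernel. These two constructions are mutually inverse.

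For the second claim, the plan is to analyse the obstruction coming from element orders. If $\sfG\leqslant\Un(2)^{\pm1}$ is a finite reflection subgroup with $-\mathbf 1\notin\sfG$, then $\widetilde\rho|_\sfG$ has trivial kernel and is therefore an isomorphism onto $H:=\widetilde\rho(\sfG)\leqslant\Or(3)$, itself a finite reflection group. Every $h\in H$ must then admit a preimage of matching order in $\Un(2)^{\pm1}$. Two short calculations restrict $H$: the only preimages of the central inversion $-\mathbf 1\in\Or(3)$ are $\pm i\mathbf 1$, of order~$4$, because $\widetilde\rho(A)=-\mathbf 1$ forces $ABA^{-1}=B$ for all $B\in\su(2)$ together with $\det A=-1$, which pins $A$ down to $\CC^\times\mathbf 1$ with $A^2=-\mathbf 1$; and if $\widetilde g\in\SU(2)$ covers a rotation by~$\pi$ in $\SO(3)$, then $\widetilde g^2\in\{\pm\mathbf 1\}$ covers the identity but cannot equal $\mathbf 1$ (otherwise $\widetilde g$ itself, of order at most~$2$, would cover the identity), so $\widetilde g^2=-\mathbf 1$ and $\widetilde g$ has order~$4$. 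Hence $H$ contains neither the central inversion nor any rotation of order~$2$.

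Running through the classification of finite reflection subgroups of $\Or(3)$ recalled in the paragraph after Coxeter's theorem, these two exclusions isolate exactly the odd dihedral groups $\sfD_{2m+1}$: the polyhedral reflection groups of types $A_3, B_3, H_3$ all contain double-transposition rotations of order~$2$, and $B_3, H_3$ additionally contain $-\mathbf 1$; the even dihedral groups $\sfD_{2k}$ contain the rotation by~$\pi$ in their reflecting plane; and adjoining central inversion to any smaller reflection group violates the first exclusion. For existence, I would lift the Coxeter presentation $\sfD_{2m+1}=\langle r,s\mid r^2=s^2=(rs)^{2m+1}=1\rangle$: pick lifts $\widetilde r,\widetilde s\in\Un(2)^{\pm1}$, necessarily order-$2$ reflections by Theorem~\ref{thm:main}(3), and note $(\widetilde r\widetilde s)^{2m+1}\in\{\pm\mathbf 1\}$; if this equals $-\mathbf 1$, replacing $\widetilde s$ by $-\widetilde s$ yields $(\widetilde r(-\widetilde s))^{2m+1}=-(\widetilde r\widetilde s)^{2m+1}=\mathbf 1$, using that $2m+1$ is odd. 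Then $\langle\widetilde r,\widetilde s\rangle$ satisfies the Coxeter relations of $\sfD_{2m+1}$, so it has order at most $2(2m+1)$; but it also surjects via $\widetilde\rho$ onto $\sfD_{2m+1}$, so it has order exactly $2(2m+1)$, is mapped isomorphically onto its image, and avoids $-\mathbf 1$. The chief obstacle is to make the case analysis genuinely exhaustive, but this reduces to short bookkeeping on the finite list of reflection subgroups of $\Or(3)$ already catalogued in the text.
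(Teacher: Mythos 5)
The paper states this proposition without a dedicated proof, expecting it to follow from Theorem~\ref{thm:main}, Coxeter's theorem, and the Jordan--Klein theorem; so what follows is an assessment of your argument on its own terms rather than a comparison.

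Your treatment of the bijection is correct and tight: both directions of the correspondence are verified, the preimage is identified as $\langle\pm\widetilde r_1,\dots,\pm\widetilde r_k\rangle$ (where each $\pm\widetilde r_i$ is itself a complex reflection, since negating preserves $\tr=0$ and $\det=-1$), containment of the kernel is checked, and the two maps are shown to be mutually inverse. Your order computations in the second part are also correct: the only preimages of $-\mathbf 1_3\in\Or(3)$ in $\Un(2)^{\pm1}$ are $\pm i\mathbf 1$ (your commutation argument pins $A$ to the centre $\CC^\times\mathbf 1$, and unitarity forces $|c|=1$), and a lift in $\SU(2)$ of a rotation by $\pi$ squares to $-\mathbf 1$. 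The lifting-of-generators existence argument via the Coxeter presentation of $\sfD_{2m+1}$, using oddness of $2m+1$ to correct a sign, is clean and complete.

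The one genuine gap is in the exhaustiveness of your case analysis of reflection subgroups of $\Or(3)$, which you yourself flag as the ``chief obstacle.'' Your three categories --- $A_3,B_3,H_3$; the planar even dihedral groups $I_2(2k)$; groups obtained by adjoining $-\mathbf 1$ --- do not explicitly account for the reducible rank-$3$ reflection groups $A_1\times I_2(m)$ when $m$ is odd. These do \emph{not} contain $-\mathbf 1$ (one needs $-I_2\in I_2(m)$, i.e.\ $m$ even), so they escape your first and third categories, and the justification you give for excluding $\sfD_{2k}$ (``rotation by $\pi$ in their reflecting plane'') describes the rank-$2$ geometry, not the rank-$3$ geometry of $A_1\times I_2(m)$. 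They are in fact excluded by your second criterion, since the product of the $A_1$-reflection with any $I_2(m)$-reflection is a rotation by $\pi$, but this case needs to be stated.

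You can avoid the bookkeeping altogether and shorten the second part considerably by leaning harder on Theorem~\ref{thm:JordanKlein}. If $\widetilde\sfG\leqslant\Un(2)^{\pm1}$ is finite, reflection-generated, and $-\mathbf 1\notin\widetilde\sfG$, then $\widetilde\sfG\cap\SU(2)$ is a finite subgroup of $\SU(2)$ not containing the unique involution $-\mathbf 1$, hence of odd order by Cauchy's theorem, hence cyclic by Theorem~\ref{thm:JordanKlein}. Its isomorphic image $H\cap\SO(3)$ is therefore cyclic of odd order $2m+1$, and since $H$ is a reflection group strictly containing $H\cap\SO(3)$ as an index-$2$ subgroup, Coxeter's bijection forces $H\cong I_2(2m+1)=\sfD_{2m+1}$. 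This replaces the case-by-case survey with a single structural argument and subsumes both of your order obstructions (the central inversion and order-$2$ rotation exclusions both amount to the oddness of $|\widetilde\sfG\cap\SU(2)|$).
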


\begin{sit}
Finally we note the following application of Theorem \ref{thm:main} to root systems that is reminiscent
of a Theorem by E.~Witt. To formulate this, recall from \cite{HumphreysReflection}, say, 
that a finite subset $\calr\subset \RR^{n}$ of $n$--dimensional Euclidean space is a (normalized) 
{\em root system\/} if 
\begin{enumerate}[\rm (a)]
\item (Normalization) Each vector $v$ in $\calr$ is of length $1$,
\item For $v\in \calr$, one has $\RR v \cap \calr = \{v, -v\}$,
\item  For $v\in \calr$, the reflection $s_{v}\colon \RR^{n}\to \RR^{n}, s_{v}(w) = w - 2(v,w)v$,
with $(v,w)$ the usual positive definite scalar product on $\RR^{n}$, maps $\calr$ to itself
(observe that $(v,v)=1$ as $\calr$ is normalized).
\end{enumerate}
Normalization ensures that $\calr$ is indeed a subset of the unit sphere $\SS^{n-1}\subset\RR^{n}$.

Given a root system $\calr$, the group generated by the reflections $\{s_{v}\}_{v\in\calr}$ is naturally
a subgroup of the symmetric group $\Sym(\calr)$ on $\calr$, thus, this group constitutes a {\em finite\/}
reflection subgroup of the orthogonal group $\Or(n)$; see \cite[p. 7]{HumphreysReflection}.

Conversely, if $\sfG\leqslant \Or(n)$ is a finite reflection group, then the set $\calr$ of unit normal vectors 
to the mirrors of reflections in $\sfG$ forms a normalized root system.
\end{sit}

Now we are ready to recall Witt's Theorem. For more on quaternions, see Section \ref{Sec:Quaternions}.
\begin{theorem}[E.~Witt {\cite{Witt}}]
If one identifies $\SU(2)$ with the group $\HH_{1}$ of quaternions of norm $1$ (see \ref{unitquaternions=SU(2)}), and if 
$\sfG\leqslant \SU(2)\cong \HH_{1}$ is a subgroup of even order, then the elements of that group, 
viewed as vectors in the real vector space $\HH\cong\RR^{4}$ of all quaternions, form the root system 
of a finite reflection group in $\Or(4)$.
\end{theorem}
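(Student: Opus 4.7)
The plan is to verify the three axioms (a), (b), (c) of a normalized root system directly for the set $\sfG \subset \HH_1 \subset \HH \cong \RR^4$. The essential ingredient is that the standard Euclidean inner product on $\HH \cong \RR^4$ agrees with the symmetric bilinear form coming from the norm, namely
\[
(x,y) = \tfrac{1}{2}(x\bar y + y\bar x) = \Reg(x\bar y),
\]
and that $\HH_1 = \{x \in \HH : x\bar x = 1\}$ coincides with the unit sphere $\SS^3 \subset \RR^4$. First I would observe that axiom (a), normalization, is immediate from $\sfG \subseteq \HH_1$.

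For axiom (b), if $\lambda g \in \sfG$ for some real $\lambda$, then $|\lambda g| = 1 = |g|$ forces $\lambda = \pm 1$. Here is where the even-order hypothesis enters: by Cauchy's theorem $\sfG$ contains an element of order $2$, and $-\mathbf 1$ is the only element of order $2$ in $\HH_1 \cong \SU(2)$ (as already noted just before Theorem \ref{thm:JordanKlein}). Hence $-\mathbf 1 \in \sfG$, and therefore $-g = (-\mathbf 1)\cdot g \in \sfG$, giving $\RR g \cap \sfG = \{g,-g\}$.

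The core step is axiom (c), the reflection computation. Given $g, h \in \sfG$, I would compute
\[
s_g(h) = h - 2(g,h)g = h - (g\bar h + h\bar g)g = h - g\bar h g - h(\bar g g) = -g\bar h g,
\]
using that $\bar g g = 1$ because $|g|=1$. Since $|h|=1$ gives $\bar h = h^{-1}$, this reduces to the clean quaternionic formula $s_g(h) = -g h^{-1} g$. Because $g, h^{-1}, -\mathbf 1$ all lie in $\sfG$, so does their product; hence $s_g(h) \in \sfG$, and the three axioms are verified. The resulting reflection group is finite because it acts on the finite spanning set $\sfG$ and hence sits inside the symmetric group $\Sym(\sfG)$.

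The main obstacle is really just the brief quaternionic computation identifying $s_g(h)$ with $-g h^{-1} g$; once that is in hand, closure under reflections is automatic from the group structure on $\sfG$ together with the presence of $-\mathbf 1$. Notably, the hypothesis of even order is used twice in essentially the same way: to produce $-\mathbf 1 \in \sfG$ for axiom (b), and to absorb the sign in the formula $s_g(h) = -gh^{-1}g$ in axiom (c). Without it, the ``root system'' produced would generally not be stable under the reflections it defines.
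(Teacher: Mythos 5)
Your proof is correct. The paper does not actually supply a proof of Witt's theorem---it is stated as a classical result and referred to \cite{Witt}---so there is no argument in the text to compare against, but your direct verification of axioms (a), (b), (c) is the natural approach and the computation checks out: the identity
\[
s_g(h) \;=\; h - \bigl(g\bar h + h\bar g\bigr)g \;=\; h - g\bar h g - h(\bar g g) \;=\; -g\bar h g \;=\; -g h^{-1} g
\]
is the key step, and the closure of $\sfG$ under this operation (using $-\mathbf 1\in\sfG$, which you correctly derive from the even-order hypothesis via Cauchy's theorem and the fact that $-\mathbf 1$ is the unique involution in $\HH_1$) gives axiom (c). Two small remarks. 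First, your finiteness argument asserts that $\sfG$ is a \emph{spanning} set of $\RR^4$, which need not hold (for instance $\sfG=\{\pm\mathbf 1\}$ spans only $\RR\cdot\mathbf 1$); the conclusion survives because each reflection $s_g$ with $g\in\sfG$ fixes $\sfG^{\perp}$ pointwise, so the generated group acts faithfully on the span of $\sfG$ and hence embeds in $\Sym(\sfG)$ regardless---and in any case the paper already records, in the paragraph preceding Witt's theorem, that any normalized root system gives a finite reflection group, so this step does not need to be reproved. Second, it is worth noting that $s_g = s_{-g}$, so distinct elements of $\sfG$ can give the same reflection; this does not affect any axiom, but it is why the reflection group can be much smaller than $\sfG$ itself.
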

Witt then identified the elements of the binary icosahedral group in $\SU(2)$ as the root system $\mathbf H_{4}$
whose actual existence was not known until then.

Theorem \ref{thm:main} implies the following ``three-dimensional version'' of Witt's theorem.
\begin{theorem}
The set $\Refl(\Un(2)^{\pm1})$ of complex reflections in $\Un(2)^{\pm1}$ is
\begin{align*}
\Refl(\Un(2)^{\pm1}) &= \{A\in \Un(2)\mid \tr A =0\,, \det A = -1\}\\
&=\left\{
\begin{pmatrix}
\alpha&\beta+\gamma i\\
\beta-\gamma i&-\alpha
\end{pmatrix}\in\GL(2,\CC)\bigg|
\alpha,\beta,\gamma\in \RR, 
\alpha^{2}+\beta^{2}+\gamma^{2}=1
\right\}\,,
\end{align*}
thus, the set of reflections in $\Un(2)^{\pm1}$  is in natural bijection with the unit $2$--sphere 
$\SS^{2}\subset \RR^{3}$.

If $\sfG\leqslant \Or(3)$ is a reflection group with normalized root system $\calr$, then its preimage
$\widetilde\rho^{-1}(\sfG)\leqslant \Un(2)^{\pm1}$ is the complex reflection group generated by 
$\calr\subset \SS^{2}= \Refl(\Un(2)^{\pm1})$ as subset of $\Un(2)^{\pm1}$.
\end{theorem}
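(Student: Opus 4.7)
The plan will be to prove both statements in turn. For the explicit description of $\Refl(\Un(2)^{\pm1})$, I start from the characterisation that $A\in\GL(2,\CC)$ is a reflection iff its characteristic polynomial is $t^{2}-1$, i.e.\ $\tr A=0$ and $\det A=-1$. Inside $\Un(2)^{\pm1}$, such an $A$ is automatically a unitary involution, so $A^{\dagger}=A^{-1}=A$ and $A$ is Hermitian. Writing a general trace-zero Hermitian $2\times 2$ matrix as $\begin{pmatrix}\alpha&\beta+\gamma i\\\beta-\gamma i&-\alpha\end{pmatrix}$ with $\alpha,\beta,\gamma\in\RR$, the condition $\det A=-1$ translates to $\alpha^{2}+\beta^{2}+\gamma^{2}=1$, giving the explicit formula and the bijection with $\SS^{2}$.

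For the second statement, I set $\sfH=\la A_{v}:v\in\calr\ra\leqslant \Un(2)^{\pm1}$ and aim to show $\sfH=\widetilde\rho^{-1}(\sfG)$. The argument hinges on two observations. First, the parametrisation is odd under antipodal inversion, $A_{-v}=-A_{v}$, visible directly from the explicit formula; since the root-system axioms force $-v\in\calr$ whenever $v\in\calr$, I get $-\mathbf 1=A_{v}A_{-v}=-A_{v}^{2}\in\sfH$, hence $\ker\widetilde\rho=\{\pm\mathbf 1\}\subseteq \sfH$. Second, the bijection $v\leftrightarrow A_{v}$ is to be understood as compatible with $\widetilde\rho$, meaning $\widetilde\rho(A_{v})=s_{v}$ for every $v\in\SS^{2}$. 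Granted this compatibility, $\widetilde\rho(\sfH)=\la s_{v}:v\in\calr\ra=\sfG$ by the defining property of the root system, and combining with $\ker\widetilde\rho\subseteq \sfH$ yields $\sfH=\widetilde\rho^{-1}(\widetilde\rho(\sfH))=\widetilde\rho^{-1}(\sfG)$.

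The hard part will be verifying $\widetilde\rho(A_{v})=s_{v}$ compatibly with the explicit parametrisation. This is where the Pin/Clifford picture of $\widetilde\rho$ from the preceding sections is essential: in the (s)pinor setup a unit vector $v\in\RR^{3}\subset\Cliff(\RR^{3})$ acts via the twisted adjoint representation $\widetilde\rho(A)(B)=(\det A)\,ABA^{-1}$ as the reflection $s_{v}$ on $\su(2)\cong\RR^{3}$. Since $A_{v}^{2}=\mathbf 1$, this reduces to identifying the $(-1)$-eigenspace of $B\mapsto -A_{v}BA_{v}$ on $\su(2)$ with the real line spanned by the element corresponding to $v$, a direct (if unilluminating) matrix calculation once the identification $\su(2)\cong\RR^{3}$ has been fixed. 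Given this compatibility, the remainder of the argument is formal from the exact sequence in Theorem~\ref{thm:main}.
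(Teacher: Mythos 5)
Your proof is correct and follows the route the paper itself implicitly sketches (the paper gives no standalone proof of this theorem, deferring to Theorem~\ref{thm:main} and the discussion of reflections in Section~\ref{Sec:Reflections}). The first part --- deducing Hermitianness of a unitary involution and reading off the sphere condition from $\det A=-1$ --- is exactly right and is essentially the content of the paper's description of $\Refl(\Un(2)^{\pm1})$. For the second part, the two observations you isolate ($-\mathbf 1\in\sfH$ from $A_{-v}=-A_{v}$, and $\widetilde\rho(A_{v})=s_{v}$) together with the standard fact that $\ker\phi\subseteq K$ forces $\phi^{-1}(\phi(K))=K$ are precisely what is needed, and the latter compatibility is the content of the Lemma opening Section~\ref{Sec:Reflections}.

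The one place you should be more careful is the identification $\su(2)\cong\RR^{3}$ you defer as a ``direct matrix calculation.'' This is not just a bookkeeping step: the theorem's parametrisation $v\mapsto A_{v}$ and the parametrisation of $\su(2)$ introduced earlier in Section~3 differ by a nontrivial rotation (one checks that $iA_{(\alpha,\beta,\gamma)}$ corresponds to $(\alpha,\gamma,-\beta)$ in the $(\beta,\gamma,\delta)$-coordinates of Section~3). For the statement $\widetilde\rho(A_{v})=s_{v}$ to be literally true you must use the identification $\psi\colon\RR^{3}\to\su(2)$, $\psi(v)=iA_{v}$, for the codomain of $\widetilde\rho$. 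With that choice the computation you envision is clean: $\widetilde\rho(A_{v})(iA_{v})=-A_{v}(iA_{v})A_{v}=-iA_{v}$ since $A_{v}^{2}=\mathbf 1$ and $\det A_{v}=-1$, while for $w\perp v$ the Clifford relation $A_{v}A_{w}+A_{w}A_{v}=2(v\cdot w)\mathbf 1=0$ gives $\widetilde\rho(A_{v})(iA_{w})=iA_{w}$. Stating the choice of $\psi$ explicitly would close the one genuine gap in an otherwise complete argument.
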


\begin{sit}
To summarize this section, one can replace the magic square from the beginning with the following
more precise diagram:
\begin{align*}
\xymatrix{
*++[F]\txt{Subgroups of $\SU(2)=\Spin(3)$\\ of even order}&&&
*++[F]\txt{Reflection Subgroups of \\ $\Un(2)^{\pm1}=\Pin^{+}(3)$ containing $-\mathbf 1$}
\ar[lll]+<85pt,0pt>_-{i^{-1} = (\ )\cap\SU(2)}^-{1:1\text{ up to conjugacy}}\\
\\
*++[F]{\txt{Subgroups of $\SO(3)$}} \ar@{<->}[uu]-<0pt,22pt>_-{1:1}^-{\rho}&&&
*++[F]{\txt{Reflection Subgroups of $\Or(3)$}} \ar@{<->}[uu]-<0pt,22pt>_-{1:1}^-{\widetilde\rho}
\ar[lll]+<60pt,0pt>_-{j^{-1} = (\ )\cap\SO(3)}^-{1:1\text{ up to conjugacy}}
}
\end{align*}
\end{sit}
To prove all the claims we have made so far, the essential ingredient is to understand the group
homomorphism $\widetilde\rho:\Un(2)^{\pm}\to \Or(3)$ thoroughly. We will first make everything explicit in terms
of quaternions and complex $2\times 2$--matrices and then recapture that description within the general theory of
Clifford algebras and their (s)pin groups.

\section{Quaternions} \label{Sec:Quaternions}
For beautiful treatments of the basic theory of quaternions, nothing beats the two references
\cite{EHH, Cox3} and we only excerpt a tiny bit from those.
\begin{sit}
If one views the quaternions $\HH\cong 1{\cdot} \CC\oplus \bfj{\cdot}\CC$ as $2$--dimensional (right) complex 
vector space, then multiplication from the left with elements from $\HH$ on itself is right $\CC$--linear and 
in the given basis $\{1,\bfj\}$ one obtains the (left) {\em regular representation\/} of $\HH$ on $\CC^{2}$, 
that is, the $\RR$--algebra homomorphism
\begin{align*}
\lambda\colon \HH=\RR1 \oplus\RR\bfi \oplus\RR \bfj\oplus\RR \bfk&\lto \Mat_{2\times 2}(\CC)\\
\alpha1 + \beta \bfi + \gamma \bfj +\delta \bfk&\mapsto
\begin{pmatrix}
\alpha + \beta i & -\gamma -\delta i\\
\gamma -\delta i& \alpha-\beta i
\end{pmatrix}\,.
\end{align*}
Indeed, for $\bfk\in \HH$, say, one obtains 
$\lambda(\bfk)(1) = \bfk= \bfj(-i)$ and $\lambda(\bfk)(\bfj) = \bfk\bfj = 1{\cdot}(-i)\in1{\cdot}\CC\subset  \HH$, 
so that the matrix of $\lambda(\bfk)$ is
\begin{align*}
\lambda(\bfk) =
\begin{array}{c|cc}
&1& \bfj\\
\hline
1&0&-i\\
\bfj&-i&0
\end{array}
\end{align*}
and similarly for $\lambda(1), \lambda(\bfi),\lambda(\bfj)$ and their (real) linear combinations.
\end{sit}

\begin{sit}
The map $\lambda$ is injective and yields an $\RR$--algebra isomorphism
\begin{align*}
\lambda\colon \HH\xto{\ \cong\ }
\calh :=   \left\{ \left. \begin{pmatrix}
w&-z\\
\overline z& \overline w
\end{pmatrix} \right| w,z\in \CC  \right\}\subset  \Mat_{2\times 2}(\CC)
\end{align*}
onto the real subalgebra $\calh$ of $\Mat_{2\times 2}(\CC)$. 
\end{sit}

\begin{sit}
To characterize $\calh$ as subset of $\Mat_{2\times 2}(\CC)$, with 
$x= \alpha1 + \beta \bfi + \gamma \bfj +\delta \bfk\in \HH$ write 
\[
\lambda(x) = \alpha\cdot\mathbf{1} + B\,,
\]
where $\mathbf 1$ is the $2\times 2$ identity matrix and $B$ is a {\em skew--Hermitian\/} (or {\em antihermitian}), {\em traceless\/} matrix, in that
\[
B= \lambda(\beta \bfi + \gamma \bfj +\delta \bfk)=\begin{pmatrix}
\beta i & -\gamma -\delta i\\
\gamma -\delta i& -\beta i
\end{pmatrix}
\]
satisfies
\[
\overline B^{T} = 
\begin{pmatrix}
-\beta i & \gamma +\delta i\\
-\gamma +\delta i& \beta i
\end{pmatrix} 
= - B\quad\text{and}\quad \tr B=0\,,
\]
with $\tr B$ denoting the {\em trace\/} of the matrix.
Conversely, if $\alpha\in\RR$ and $B$ is a complex $2\times 2$ matrix that is skew--Hermitian and traceless,
then $\alpha\mathbf{1} + B$ is in the image of $\lambda$.
\end{sit}

\begin{sit}
As already mentioned, the skew--Hermitian and traceless matrices form the real Lie algebra $\mathfrak{su}(2)$
in its standard representation on $\CC^{2}$, whence $\lambda(\HH)=\calh$ can be identified with
$\RR\cdot \mathbf 1\oplus \mathfrak{su}(2)\subset \Mat_{2\times 2}(\CC)$.
\end{sit}

\begin{sit}
Transporting things back to the quaternions, the {\em traceless\/} or {\em purely imaginary\/} or just 
{\em pure\/} quaternions $\HH^{0}=\RR i \oplus\RR j\oplus\RR k\subset \HH$ form a $3$--dimensional real 
vectorspace and $\lambda(\HH^{0})=\su(2)$.
\end{sit}

\begin{defn}
The {\em conjugate\/} of the quaternion $x= \alpha1 + \beta \bfi + \gamma \bfj +\delta \bfk$ is
\begin{align*}
\overline x &= \alpha1 - \beta \bfi - \gamma \bfj -\delta \bfk
\intertext{with the (square) {\em norm\/} of $x$ defined as}
N(x) &= x\overline x =\overline x x = \alpha^{2} + \beta^{2} + \gamma^{2} +\delta^{2} =\det \lambda(x)\in \RR_{\geqslant0}\,.
\end{align*}
Because $\det\lambda(x)=0$ only if $x=0$, this is the quickest way to see that $\HH\cong \calh$ is a 
{\em skew field}, also called a {\em division ring}.

Note that
\begin{align*}
\lambda(\overline x) = \begin{pmatrix}
\alpha -\beta i & \gamma +\delta i\\
-\gamma +\delta i& \alpha+\beta i
\end{pmatrix}
= (\overline{\lambda(x)})^{T}
\end{align*}
is the {\em Hermitian transpose\/} of the matrix $\lambda(x)$. Writing $\lambda(x)=\alpha\mathbf 1 + B$ as above,
then $\lambda(\overline x) = \alpha\mathbf 1 - B$, which brings out the analogy with complex conjugation.
\end{defn}

\begin{sit} \label{unitquaternions=SU(2)}
The norm restricts to a homomorphism from the multiplicative group $\HH^{*}$ of nonzero quaternions onto the
multiplicative group $(\RR_{>0},\cdot)$. It identifies via $\lambda$ with the restriction of the determinant 
homomorphism $\det\colon \GL(2,\CC)\to \CC^{*}$ to $\lambda(\HH^{*})$. The kernel of the norm
is the multiplicative group $(\HH_{1},\cdot)$ of quaternions of norm $1$, called {\em versors} (Latin for ``turners'') 
by Hamilton, and $\lambda$ induces a group isomorphism
\begin{align*}
\lambda|_{\HH_{1}}\colon \HH_{1}\xto{\ \cong\ } \SU(2)\leqslant \GL(2,\CC)
\end{align*}
onto the special unitary group
$\SU(2)\subset \calh$.
\end{sit}

\begin{sit}
To understand the embedding $\lambda\colon \HH \to \Mat_{2\times 2}(\CC)$ better, note that
for $A\in\calh$, the determinant of the matrix $Ai=iA$ will be $\det(Ai) = -\det A \in \RR_{\leqslant 0}$,
and therefore only the zero matrix simultaneously belongs to $\calh$ and $\calh i$, leading immediately to 
the following result.
\end{sit}

\begin{lemma}
As a real vector space, $\Mat_{2\times 2}(\CC)\cong \calh\oplus \calh i$.
\end{lemma}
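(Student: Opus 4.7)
The plan is to verify the decomposition by combining the determinant observation stated just before the lemma with a straightforward dimension count. The key ingredient is the characterization of $\calh$ already established: it is the real subalgebra $\lambda(\HH) \subset \Mat_{2\times 2}(\CC)$, and for every $A \in \calh$ corresponding to the quaternion $x$, one has $\det A = N(x) \in \RR_{\geqslant 0}$.

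First I would establish that $\calh \cap \calh i = \{0\}$. Take any matrix $M$ lying in both subspaces, so $M = A = A'i$ for some $A, A' \in \calh$. Since $i$ is a scalar and we are working with $2 \times 2$ matrices, $\det(A'i) = i^{2}\det(A') = -\det(A')$. On the one hand $\det M = \det A \geqslant 0$, and on the other $\det M = -\det A' \leqslant 0$, so $\det M = 0$. Because $\det$ on $\calh$ agrees with the quaternionic norm $N$, which vanishes only on $0$, we get $A = 0$, hence $M = 0$.

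Next, a dimension count finishes the proof. The real vector space $\calh \cong \HH$ has real dimension $4$, and right multiplication by the scalar $i$ is a real-linear automorphism of $\Mat_{2\times 2}(\CC)$, so $\dim_{\RR}\calh i = 4$ as well. Therefore $\dim_{\RR}(\calh + \calh i) = \dim_{\RR}\calh + \dim_{\RR}\calh i - \dim_{\RR}(\calh \cap \calh i) = 4+4-0 = 8 = \dim_{\RR}\Mat_{2\times 2}(\CC)$, and the sum is direct and exhausts the ambient space.

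There is essentially no obstacle here: the only subtle point is remembering that multiplying a $2\times 2$ matrix by a scalar $c$ scales the determinant by $c^{2}$, which is precisely what makes the signs of $\det$ on $\calh$ and on $\calh i$ opposite, and thus what forces the intersection to be trivial. Everything else is a direct consequence of results already in hand, in particular the $\RR$-algebra isomorphism $\lambda\colon \HH \xrightarrow{\cong} \calh$ and the identification of $N$ with $\det|_{\calh}$.
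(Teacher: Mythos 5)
Your proof is correct and follows essentially the same route as the paper: the trivial intersection $\calh\cap\calh i=\{\mathbf 0\}$ comes from the sign of the determinant, using that $\det|_{\calh}=N$ vanishes only at $0$, and the rest is a dimension count $4+4=8=\dim_{\RR}\Mat_{2\times 2}(\CC)$. The only cosmetic difference is that you invoke the dimension formula for the sum, while the paper explicitly lists the eight matrices $\lambda(1),\lambda(\bfi),\lambda(\bfj),\lambda(\bfk)$ and their $i$--multiples as a basis.
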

\begin{proof}
Because of the differing signs of the determinants, $\calh\cap \calh i =\{\mathbf 0\}$, with $\mathbf 0$ the zero 
matrix, and the sum  $\calh + \calh i\subseteq \Mat_{2\times 2}(\CC)$  is direct.
As the matrices $\lambda(1), \lambda(\bfi), \lambda(\bfj), \lambda(\bfk)\in\calh$ are linearly independent over 
$\RR$, so are the matrices $\lambda(1)i, \lambda(\bfi)i, \lambda(\bfj)i, \lambda(\bfk)i\in\calh i$. This shows that 
$\dim_{\RR}( \calh\oplus \calh i) = 8 = \dim_{\RR}\Mat_{2\times 2}(\CC)$, whence the inclusion
$\calh\oplus \calh i\subseteq \Mat_{2\times 2}(\CC)$ is an equality.
\end{proof}

A different, but equivalent way to state the foregoing observation, is as follows, using that as real vector spaces $\HH\otimes_{\RR}\CC\cong \calh\oplus \calh i$ via 
$x\otimes (a+bi)\mapsto \lambda(xa) + \lambda(xb)i$: 
\begin{cor}
The $\CC$--algebra $\HH\otimes_{\RR}\CC$ is isomorphic to $\Mat_{2\times 2}(\CC)$ and, modulo this 
isomorphism, $\lambda$ is the canonical map 
$\lambda=\id_{\HH}\otimes \iota:\HH\cong \HH\otimes_{\RR}\RR\into \HH\otimes_{\RR}\CC$, 
where $\iota\colon \RR\into \CC$ is the inclusion of algebras. The map 
$\lambda(\ )i:\HH\xto{\ \cong\ }\HH\otimes_{\RR}\RR i\into \Mat_{2\times 2}(\CC)$ has as image $\calh i$.
\qed
\end{cor}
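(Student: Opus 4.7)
The plan is to extend $\lambda$ by $\CC$-linearity and then verify that the extension is an isomorphism of $\CC$-algebras. Concretely, define $\tilde\lambda\colon \HH\otimes_{\RR}\CC\to \Mat_{2\times 2}(\CC)$ by $\tilde\lambda(x\otimes (a+bi)) = \lambda(x)a + \lambda(x)bi = \lambda(xa) + \lambda(xb)i$, where the right-hand side uses the inclusion $\CC\mathbf 1\into \Mat_{2\times 2}(\CC)$. Since $\lambda\colon \HH\to \Mat_{2\times 2}(\CC)$ is an $\RR$-algebra homomorphism and its image $\calh$ commutes with the scalar matrices $\CC\cdot\mathbf 1$, the map $\tilde\lambda$ is a well-defined $\CC$-algebra homomorphism by the universal property of extension of scalars.

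Next I would prove that $\tilde\lambda$ is bijective, which is where the preceding lemma does all the work. The image of $\tilde\lambda$ is exactly $\calh + \calh i$, which by the lemma equals $\Mat_{2\times 2}(\CC)$; thus $\tilde\lambda$ is surjective. For injectivity I would compare real dimensions: $\dim_{\RR}(\HH\otimes_{\RR}\CC) = 4\cdot 2 = 8 = \dim_{\RR}\Mat_{2\times 2}(\CC)$, so the surjective $\RR$-linear map $\tilde\lambda$ must be an isomorphism. This gives the first assertion, $\HH\otimes_{\RR}\CC\cong \Mat_{2\times 2}(\CC)$ as $\CC$-algebras.

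The remaining claims are then bookkeeping about how $\tilde\lambda$ restricts. Under the canonical inclusion $\HH\cong \HH\otimes_{\RR}\RR\into \HH\otimes_{\RR}\CC$ given by $x\mapsto x\otimes 1$, the composition $\tilde\lambda\circ(\id_{\HH}\otimes\iota)$ sends $x$ to $\lambda(x)$, so $\lambda$ is indeed identified with $\id_{\HH}\otimes\iota$ modulo the isomorphism $\tilde\lambda$. Similarly, the real subspace $\HH\otimes_{\RR}\RR i\subset \HH\otimes_{\RR}\CC$ is sent by $\tilde\lambda$ to $\calh i\subset \Mat_{2\times 2}(\CC)$ via $x\otimes i\mapsto \lambda(x)i$; this is an $\RR$-linear isomorphism onto $\calh i$, establishing the last claim.

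I do not expect a real obstacle here: the lemma supplies the direct sum decomposition $\Mat_{2\times 2}(\CC) = \calh \oplus \calh i$ by a determinant-sign argument, and the corollary is a repackaging of that fact via extension of scalars. The only small point worth care is verifying that $\tilde\lambda$ respects multiplication on pure tensors of the form $(x\otimes z)(y\otimes w) = xy\otimes zw$, which is immediate from $\lambda$ being an $\RR$-algebra map and from $\CC\cdot\mathbf 1$ being central in $\Mat_{2\times 2}(\CC)$.
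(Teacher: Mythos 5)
Your proposal is correct and takes essentially the same route as the paper: the paper also packages the preceding lemma's decomposition $\Mat_{2\times 2}(\CC) = \calh \oplus \calh i$ via the map $x\otimes(a+bi)\mapsto \lambda(xa)+\lambda(xb)i$, leaving the algebra-homomorphism and bijectivity checks implicit, which you have spelled out correctly (centrality of scalar matrices for multiplicativity, the lemma for surjectivity, and the real-dimension count $8=8$ for injectivity).
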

A further consequence is the following.
\begin{cor} \label{Cor:quaternionsunimodular}
The set $\lambda(\HH_{1})\cup \lambda(\HH_{1})i\subset \Mat_{2\times 2}(\CC)$ forms a group 
under multiplication, namely the unimodular unitary group
\begin{align*}
\Un(2)^{\pm 1} =\{A\in \GL(2,\CC)\mid A\cdot \overline A^{T} = \mathbf 1,  \ \det A = \pm 1\}\leqslant \Un(2)
\end{align*}
of unitary $2\times 2$ matrices of determinant $\pm 1$.\qed
\end{cor}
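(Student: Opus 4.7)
The plan is to prove the set equality $\lambda(\HH_{1}) \cup \lambda(\HH_{1})i = \Un(2)^{\pm 1}$; once this is established, the group structure is automatic, because $\Un(2)^{\pm 1}$ is manifestly a subgroup of $\GL(2,\CC)$ (it is the preimage under $\det$ of $\{\pm 1\}\leqslant\CC^{*}$ intersected with $\Un(2)$).

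For the inclusion $\subseteq$, the piece $\lambda(\HH_{1}) = \SU(2)$ lies in $\Un(2)^{\pm 1}$ by \ref{unitquaternions=SU(2)}. For the second piece, observe that $i\mathbf{1}$ is a central scalar matrix with $\det(i\mathbf{1}) = -1$ and $(i\mathbf{1})\overline{(i\mathbf{1})}^{T} = (i\mathbf{1})(-i\mathbf{1}) = \mathbf{1}$. Thus, for any $q\in\HH_{1}$, the matrix $A = \lambda(q)\,i$ satisfies $A\overline{A}^{T} = \lambda(q)(i\mathbf{1})(-i\mathbf{1})\overline{\lambda(q)}^{T} = \lambda(q)\overline{\lambda(q)}^{T} = \mathbf{1}$ and $\det A = i^{2}\det\lambda(q) = -1$, hence $A\in\Un(2)^{\pm 1}$.

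For the reverse inclusion $\supseteq$, let $A\in\Un(2)^{\pm 1}$. If $\det A = +1$, then $A\in\SU(2) = \lambda(\HH_{1})$ directly by \ref{unitquaternions=SU(2)}. If $\det A = -1$, consider $A' := -iA = A\cdot(i\mathbf{1})^{-1}$. The same centrality argument shows $A'$ is unitary, and $\det A' = (-i)^{2}\det A = (-1)(-1) = +1$, so $A'\in \SU(2) = \lambda(\HH_{1})$. Writing $A' = \lambda(q)$ with $q\in\HH_{1}$ and noting that $i\mathbf{1}$ commutes with $A'$, we get $A = iA' = \lambda(q)\,i \in \lambda(\HH_{1})\,i$, as required.

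I do not anticipate a real obstacle here: the key mechanism is simply that multiplication by the central scalar $i\mathbf{1}$ exchanges the two determinant-fibers $+1$ and $-1$, and the earlier identification $\lambda(\HH_{1})=\SU(2)$ takes care of the determinant-$1$ fiber. The only care needed is in correctly tracking the signs coming from $i^{2} = -1$ and from the Hermitian transpose $\overline{i\mathbf{1}}^{T} = -i\mathbf{1}$.
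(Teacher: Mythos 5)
Your proof is correct, and it fills in the details of exactly the mechanism the paper leaves implicit behind the \emph{\qed}: the identification $\lambda(\HH_{1})=\SU(2)$ handles the determinant-$1$ coset, and right-multiplication by the central unitary scalar $i\mathbf{1}$ (which has $\det=-1$) swaps the two fibers of $\det\colon\Un(2)^{\pm1}\to\{\pm1\}$, giving $\Un(2)^{\pm1}=\SU(2)\cup\SU(2)i$. The paper reads this off from the preceding decomposition $\Mat_{2\times2}(\CC)=\calh\oplus\calh i$ and restates it later in Section 6 when identifying $\PIN^{+}(3)$, but the underlying argument is the same as yours; your version is simply more explicit about verifying unitarity of $\lambda(q)i$ and the sign bookkeeping from $\overline{i\mathbf 1}^{T}=-i\mathbf1$.
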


\section{The Clifford Algebra of Euclidean $3$--Space} \label{Sec:Clifford3}

Here we define the players in the upper part of the magic square: the (s)pin groups for Euclidean $3$-space. We use ad-hoc definitions to show Theorem \ref{thm:main}. In Section \ref{Sec:CliffordGeneral} general Clifford algebras and their (s)pin groups will be defined.

\begin{sit} \label{Pauli-matrices}
Let's have a closer look at the matrices $\lambda(1)i, \lambda(\bfi)i, \lambda(\bfj)i, \lambda(\bfk)i\in\calh i
\subset \Mat_{2\times 2}(\CC)$ from above. One has, obviously,
\begin{align*}
\lambda(1)i&=
\begin{pmatrix}
i&0\\0&i
\end{pmatrix}\,,\quad
\lambda(\bfi)i=
\begin{pmatrix}
-1&0\\0&1
\end{pmatrix}\,,\quad
\lambda(\bfj)i=
\begin{pmatrix}
0&-i\\i&0
\end{pmatrix}\,,\quad
\lambda(\bfk)i=
\begin{pmatrix}
0&1\\1&0
\end{pmatrix}\,.
\end{align*}
Up to relabeling and a sign, the last three of these are Pauli's famous {\em spin matrices\/} from 
quantum mechanics. More precisely, the classical {\em Pauli matrices\/} are
\begin{align*}
e_{1} = \lambda(\bfk)i = 
\begin{pmatrix}
0&1\\1&0
\end{pmatrix}\,\quad
e_{2} = \lambda(\bfj)i = 
\begin{pmatrix}
0&-i\\i&0
\end{pmatrix}\,,\quad
e_{3} =-\lambda(\bfi)i =
\begin{pmatrix}
1&0\\0&-1
\end{pmatrix}\,.
\end{align*}
\end{sit}

\begin{sit}
You check immediately that 
\[
e_{123}=e_{1}e_{2}e_{3} = \lambda(1)i =
\begin{pmatrix}
i&0\\0&i
\end{pmatrix}\,,
\]
and that the Pauli matrices satisfy 
\begin{align*}
e_{\nu}^{2} &=\mathbf{1}&&\text{for $\nu=1,2,3$,}\\
e_{\mu}e_{\nu} +e_{\nu}e_{\mu} &= \mathbf 0&&\text{for $\mu,\nu=1,2,3; \ \mu\neq \nu$.}
\end{align*}
\end{sit}

An immediate application of these relations is as follows.
\begin{cor}
For $x_{1}, x_{2}, x_{3}\in\RR$, one has in the matrix algebra $\Mat_{2\times 2}(\CC)$ that
\begin{align*}
(x_{1}e_{1}+ x_{2}e_{2} + x_{3}e_{3})^{2} = (x_{1}^{2} +x_{2}^{2}+ x_{3}^{2})\mathbf 1\,.
\end{align*}
Put differently, the real vector space spanned by the Pauli matrices inside $\Mat_{2\times 2}(\CC)$
and endowed with the quadratic form given by squaring inside the matrix algebra is Euclidean $3$--space.
\end{cor}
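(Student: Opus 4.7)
The plan is a direct expansion using the two families of relations satisfied by the Pauli matrices that were just recorded. Writing $v = x_1 e_1 + x_2 e_2 + x_3 e_3$, I would expand
\begin{align*}
v^2 \;=\; \sum_{\nu=1}^{3} x_\nu^2\, e_\nu^2 \;+\; \sum_{1\leqslant \mu<\nu\leqslant 3} x_\mu x_\nu\,\bigl(e_\mu e_\nu + e_\nu e_\mu\bigr).
\end{align*}
The first sum collapses to $(x_1^2+x_2^2+x_3^2)\mathbf{1}$ by the relation $e_\nu^2 = \mathbf{1}$, while each term of the second sum vanishes by the anticommutation relation $e_\mu e_\nu + e_\nu e_\mu = \mathbf{0}$ for $\mu\neq\nu$. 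This gives the desired identity.

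For the second assertion, I would note that since the three Pauli matrices $e_1,e_2,e_3$ were exhibited above as $\RR$-linearly independent elements of $\Mat_{2\times 2}(\CC)$, the assignment $(x_1,x_2,x_3)\mapsto x_1e_1+x_2e_2+x_3e_3$ is an $\RR$-linear isomorphism from $\RR^3$ onto its image $V\subset\Mat_{2\times 2}(\CC)$. Under this identification the quadratic form $v\mapsto v^2$ (taking values in the one-dimensional subspace $\RR\cdot\mathbf{1}$, which we identify with $\RR$ via the coefficient of $\mathbf 1$) becomes exactly $x_1^2+x_2^2+x_3^2$, i.e.\ the standard positive definite Euclidean form on $\RR^3$.

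There is no serious obstacle here; the statement is a one-line consequence of the two displayed relations, together with the linear independence already established when the Pauli matrices were introduced in \ref{Pauli-matrices}. The only thing to be mildly careful about is the interpretation of ``squaring gives the quadratic form'': the output of $v^2$ lies in $\RR\cdot\mathbf 1$, and one uses the canonical identification $\RR\cdot\mathbf{1}\cong\RR$ to read off the value of the form.
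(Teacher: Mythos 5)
Your expansion using $e_\nu^2=\mathbf 1$ and the anticommutation relations $e_\mu e_\nu+e_\nu e_\mu=\mathbf 0$ is exactly the argument the paper has in mind; indeed the paper simply labels the corollary ``an immediate application of these relations'' and offers no further proof. Your additional remark identifying $\RR\cdot\mathbf 1$ with $\RR$ and invoking linear independence of the Pauli matrices is a correct and welcome clarification of the second sentence.
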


As for the remaining products of two Pauli matrices, writing $e_{\mu\nu}=e_{\mu}e_{\nu}$,
\begin{align*}
e_{12} = \lambda(\bfi)= 
\begin{pmatrix}
i&0\\0&-i
\end{pmatrix}\,,\quad
e_{13} = \lambda(\bfj)= 
\begin{pmatrix}
0&-1\\1&0
\end{pmatrix}\,,\quad
e_{32} = \lambda(\bfk)= 
\begin{pmatrix}
0&-i\\-i&0
\end{pmatrix}\,.
\end{align*}
Putting all this together re-establishes the following classical results. Note here that we will define Clifford algebras in general in Section \ref{Sec:CliffordGeneral}.
\begin{theorem} \label{Thm:Cl3}
\begin{enumerate}[\rm (1)]
\item 
The algebra $\Mat_{2\times 2}(\CC)$ is the {\em Clifford algebra 
$\Cl=\Cl_{3}=\Cl_{3,0}$ of Euclidean $3$--space}, 
$E=\RR e_{1}\oplus\RR e_{2}\oplus\RR e_{3}$ equipped with the standard Euclidean form
\[
Q(x_{1}e_{1}+ x_{2}e_{2} + x_{3}e_{3}) =x_{1}^{2} +x_{2}^{2}+ x_{3}^{2}\,.
\]
In particular, as a real algebra $\Mat_{2\times 2}(\CC)$ is generated by the three Pauli matrices.
\item
The matrices $\mathbf 1,\  e_{1}, e_{2},e_{3},\ e_{12},e_{13},e_{23},\ e_{123}$ form a real basis of this algebra.

\item
The {\em even\/} Clifford subalgebra is $\Cl^{\uzero}=\calh =\lambda(\HH) = 
\RR\mathbf 1\oplus\RR e_{12}\oplus\RR e_{13}\oplus\RR e_{23}$.

\item
The {\em odd\/} part of this Clifford algebra is $\Cl^{\uone}=\calh i =\lambda(\HH)i = 
\RR e_{1}\oplus\RR e_{2}\oplus\RR e_{3}\oplus\RR e_{123}$.
\item
Using the real vector space grading given by the product-length, one has the direct sum decomposition
into subspaces
\begin{align*}
\Cl^{0}&= \RR\mathbf 1\,,\\
\Cl^{1}&= \RR e_{1}\oplus\RR e_{2}\oplus\RR e_{3} =\mathfrak{su}(2){\cdot} i\,,\\
\Cl^{2}&= \RR e_{12}\oplus\RR e_{13}\oplus\RR e_{23} = \mathfrak{su}(2)\,,\\
\Cl^{3}&= \RR e_{123} =\RR\mathbf 1i\,.
\end{align*}

\end{enumerate}
\end{theorem}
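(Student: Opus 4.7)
The plan is to invoke the universal property of the Clifford algebra. The computations immediately preceding the statement verify that the Pauli matrices satisfy $e_\nu^2 = \mathbf 1$ and $e_\mu e_\nu + e_\nu e_\mu = \mathbf 0$ for $\mu\ne\nu$, which are precisely the defining relations for the Clifford algebra of $(E,Q)$ with $Q(x)=x_1^2+x_2^2+x_3^2$. Accordingly, there is a unique $\RR$-algebra homomorphism
\[
\varphi\colon \Cl_{3,0}\lto \Mat_{2\times 2}(\CC)
\]
sending a chosen orthonormal basis of $E$ to $e_1,e_2,e_3$. All of (1)--(5) will follow once I show $\varphi$ is an isomorphism and then read off the images of the length-graded pieces.

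To prove $\varphi$ is an isomorphism I would use a dimension count. It is standard that $\dim_\RR \Cl_{3,0} = 2^3 = 8 = \dim_\RR \Mat_{2\times 2}(\CC)$, so surjectivity of $\varphi$ will suffice. For surjectivity I show that the eight images $\mathbf 1,\ e_1,e_2,e_3,\ e_{12},e_{13},e_{23},\ e_{123}$ are linearly independent over $\RR$, hence span. Here the decomposition $\Mat_{2\times 2}(\CC) = \calh\oplus \calh i$ from the previous section does the real work: the computations in Section~\ref{Sec:Quaternions} and in \ref{Pauli-matrices} show that $\mathbf 1,\ e_{12},\ e_{13},\ e_{23}$ coincide with $\lambda(1),\lambda(\bfi),\lambda(\bfj),\lambda(\bfk)$ in $\calh$, while $e_{123},e_1,e_2,e_3$ coincide (up to signs) with $\lambda(1)i,\lambda(\bfk)i,\lambda(\bfj)i,-\lambda(\bfi)i$ in $\calh i$. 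Since $\{\lambda(1),\lambda(\bfi),\lambda(\bfj),\lambda(\bfk)\}$ is an $\RR$-basis of $\calh$ and the sum is direct, the eight matrices form an $\RR$-basis of $\Mat_{2\times 2}(\CC)$. This simultaneously establishes (1) and (2).

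Items (3)--(5) are then essentially a bookkeeping exercise on the image side. The even subalgebra $\Cl^{\uzero}$ is by definition spanned over $\RR$ by products of evenly many generators, giving the basis $\mathbf 1, e_{12}, e_{13}, e_{23}$; its image under $\varphi$ is $\RR\mathbf 1\oplus \su(2) = \calh = \lambda(\HH)$, proving (3). Similarly $\Cl^{\uone}$ is the $\RR$-span of $e_1,e_2,e_3,e_{123}$, whose image is $\calh i$, proving (4). For the vector-space grading (5), $\Cl^0=\RR\mathbf 1$ is clear; $\Cl^2 = \RR e_{12}\oplus \RR e_{13}\oplus \RR e_{23}$ was already identified in Section~\ref{Sec:Quaternions} with $\su(2)$; the Pauli matrices are skew-Hermitian after multiplication by $i$, so $\Cl^1 = \su(2)\cdot i$; and $\Cl^3 = \RR e_{123} = \RR \mathbf 1 i$ was noted in \ref{Pauli-matrices}.

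If there is any obstacle at all it is purely notational: one must keep straight the three different incarnations of the relevant subspaces (as abstract Clifford products, as matrices through $\varphi$, and as images of quaternions through $\lambda$). Since every identification required has already been made explicit in the preceding sections, no new computation is needed beyond the universal-property invocation and the dimension count.
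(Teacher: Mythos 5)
Your proposal is correct and follows essentially the same approach as the paper: the paper establishes the Clifford relations for the Pauli matrices, computes all eight basis products, and invokes the decomposition $\Mat_{2\times 2}(\CC)=\calh\oplus\calh i$ (together with the basis $\lambda(1),\lambda(\bfi),\lambda(\bfj),\lambda(\bfk)$ of $\calh$), then states the theorem as a summary of these computations; you merely make explicit the universal-property map $\varphi\colon\Cl_{3,0}\to\Mat_{2\times 2}(\CC)$ and the dimension count $2^3=8$ that the paper leaves implicit.
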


\section{The Pin Groups for Euclidean $3$--Space} \label{Sec:Pin3}
%%%%% 

The (multiplicative) group of {\em homogeneous\/} invertible elements in the $\ZZ/2\ZZ$--graded Clifford algebra 
$\Cl=\Cl_3$ is denoted $\Cl^{*}_{hom}$. In the case at hand, $\Cl^{*}_{hom} = \lambda(\HH^{*})\cup \lambda(\HH^{*})i$. The following three involutions can be defined more generally on the Clifford algebra of any quadratic space $(V,q)$, see \ref{CliffordInvolution} later. Here we give ad-hoc definitions, using the direct sum decomposition from Theorem \ref{Thm:Cl3} (5). For $x \in \Cl$ write $x=[x]_0 + [x]_1 + [x]_2 + [x]_3$, where $[-]_i$ is in $\Cl^{i}$. 

\begin{itemize}
\item The {\em principal automorphism\/} $\alpha$ of the Clifford algebra $\Cl$ 
of $x$ is given by $\alpha(x)=[x]_0 - [x]_1 + [x]_2 - [x]_3$.
\item The {\em transpose\/} $t$
  of the Clifford algebra $\Cl$
is given by $t(x)=[x]_0 + [x]_1 - [x]_2 - [x]_3$.
\item {\em Clifford conjugation\/} is the composition $\overline{x} =\alpha\circ t(x)=t\circ\alpha(x)$. It is an anti-automorphism of the algebra.
\end{itemize}
In our case, this amounts to the following $\RR$--linear maps, specified on the basis and on the homogeneous 
components of the product--length grading. 
\begin{align*}
\begin{array}{c|rrrrrrrr|rrrr}
&\mathbf 1&e_{1}& e_{2}& e_{3}& e_{12}& e_{13}& e_{23}& e_{123}&\Cl^{0}&\Cl^{1}&\Cl^{2}&\Cl^{3}\\
\hline
\alpha&1&-e_{1}& -e_{2}& -e_{3}& e_{12}& e_{13}& e_{23}& -e_{123}&+&-&+&-\\
t &1&e_{1}& e_{2}& e_{3}& -e_{12}& -e_{13}& -e_{23}& -e_{123}&+&+&-&-\\
\overline{(\ )} =\alpha t&1&-e_{1}& -e_{2}& -e_{3}& -e_{12}& -e_{13}& -e_{23}& e_{123}&+&-&-&+
\end{array}
\end{align*}
The following observation is now easily verified.
\begin{proposition}
The map $i_{-}:\Cl^{*}_{hom}\to \Aut_{\RR-alg}(\Cl)$ given by
\begin{align*}
i_{u}(x) = \alpha(u)xu^{-1}
\end{align*}
is a group homomorphism into the group of $\RR$--algebra automorphisms of the Clifford algebra.

\end{proposition}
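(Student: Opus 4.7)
The plan has two parts: show (i) that each $i_u$ is an $\RR$-algebra automorphism of $\Cl$, and (ii) that $u\mapsto i_u$ respects multiplication. I would do (ii) first, as it is just a one-line computation:
\begin{align*}
i_u\!\circ\!i_v(x) \;=\; \alpha(u)\bigl(\alpha(v)\,x\,v^{-1}\bigr)u^{-1} \;=\; \alpha(u)\alpha(v)\,x\,v^{-1}u^{-1} \;=\; \alpha(uv)\,x\,(uv)^{-1} \;=\; i_{uv}(x),
\end{align*}
using that $\alpha$ is itself a ring automorphism of $\Cl$ (so $\alpha(uv)=\alpha(u)\alpha(v)$) together with $(uv)^{-1}=v^{-1}u^{-1}$. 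This also yields $i_{u^{-1}}=i_u^{-1}$, so each $i_u$ is at least an $\RR$-linear automorphism of $\Cl$.

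The essential content is (i). I would split on the $\ZZ/2$-parity of $u\in\Cl^{*}_{hom}=(\Cl^{\uzero})^{*}\sqcup(\Cl^{\uone})^{*}$. For $u$ even one has $\alpha(u)=u$, and $i_u=\mathrm{Ad}_u$ is plain inner conjugation, patently an algebra automorphism. For $u$ odd one has $\alpha(u)=-u$, and here I would prefer a structural argument to a brute-force check: verify first that $i_u$ maps the generating subspace $V=\Cl^{1}$ into itself and acts there orthogonally, then invoke the universal property of the Clifford algebra to promote that to an algebra automorphism of $\Cl$. Concretely, for invertible $u\in V$ (so $u^{-1}=u/Q(u)$) and any $v\in V$, the Clifford relation $uv+vu=2B(u,v)\mathbf 1$ yields
\[
\alpha(u)\,v\,u^{-1}\;=\;-\frac{uvu}{Q(u)}\;=\;v-\frac{2B(u,v)}{Q(u)}\,u\;=\;r_u(v),
\]
the Euclidean reflection in the hyperplane $u^{\perp}$. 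Since $V$ generates $\Cl$ as an $\RR$-algebra and $r_u\in\Or(V,Q)$, the universal property of the Clifford algebra produces a unique algebra automorphism of $\Cl$ extending $r_u$, and this extension must coincide with $i_u$ on the generators $V$. For a general odd homogeneous invertible $u$ (which in dimension three has the form $v+\lambda e_{123}$) one exploits the centrality of $e_{123}$ to reduce to the case of a vector.

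The main obstacle I expect is precisely this identification in the odd case: a direct verification of $i_u(xy)=i_u(x)i_u(y)$ from the defining formula alone runs into sign subtleties arising from the $\ZZ/2$-grading, and the $\alpha$-twist in the definition of $i_u$ is exactly the ingredient needed for those signs to cancel. The universal-property approach sidesteps the bookkeeping by reducing everything to the transparent action of $i_u$ on the generating subspace $V$.
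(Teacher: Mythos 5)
Your reduction of the homomorphism property $i_u\circ i_v=i_{uv}$ and your treatment of the even case are both fine. The gap is in the odd case, in the step ``the universal property $\dots$ produces a unique algebra automorphism $\Phi$ of $\Cl$ extending $r_u$, and this extension must coincide with $i_u$ on the generators $V$,'' followed by the tacit conclusion $\Phi=i_u$. Agreement on the generating subspace $V$ determines $\Phi$, because $\Phi$ is already known to be an algebra homomorphism; it does \emph{not} determine $i_u$ unless $i_u$ is also known to be an algebra homomorphism, which is exactly what you set out to prove. The argument is circular.

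In fact, with the formula as given, the odd case fails outright. Take $u=e_1$ in $\Cl_{3,0}$, so $\alpha(u)=-u$ and $u^{-1}=u$; then $i_{e_1}(1)=\alpha(e_1)\cdot 1\cdot e_1^{-1}=-e_1e_1=-\mathbf 1\ne \mathbf 1$, so $i_{e_1}$ does not fix the unit and cannot be a ring homomorphism. More generally, for $u$ odd one has $u^{-1}\alpha(u)=-1$, hence
\[
i_u(x)\,i_u(y)\;=\;\alpha(u)\,x\,\bigl(u^{-1}\alpha(u)\bigr)\,y\,u^{-1}\;=\;-\,\alpha(u)\,xy\,u^{-1}\;=\;-\,i_u(xy),
\]
so the sign does \emph{not} cancel: the $\alpha$-twist, which depends only on $u$, is not the right correction. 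The map that actually is an $\RR$-algebra automorphism is the graded twisted conjugation $i_u(x)=(-1)^{|u||x|}uxu^{-1}$ for homogeneous $x$, where the sign depends on the parity of $x$ as well --- this is precisely the formula the paper uses in its general treatment in Section~\ref{Sec:CliffordGeneral}. It agrees with $\alpha(u)xu^{-1}$ on $V$ (so your reflection computation stands) and whenever $u$ is even, but differs by a sign when $u$ is odd and $x$ is even; in particular it fixes $\mathbf 1$. With the grading-aware sign the check closes up: $i_u(x)\,i_u(y)=(-1)^{|u|(|x|+|y|)}uxyu^{-1}=i_u(xy)$. Your universal-property strategy is otherwise a good one and, applied to the corrected formula, gives a clean proof; as written, though, it defends a map that is not an algebra homomorphism for odd $u$.
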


\begin{sit} One uses these maps to define {\em norms\/} on the Clifford algebra,
\begin{align*}
N(x) &= \overline x x= \alpha(t(x))x\,,\\
N'(x) &= t(x)x\,.
\end{align*}
These norms define various group homomorphisms on the group of homogeneous invertible elements
in $\Cl$.
\end{sit}

\begin{lemma}
The restrictions of $N, N', N^{2}=(N')^{2}$ to $\Cl_{hom}^{*}$ yield group homomorphisms into 
$\RR^{*}{\cdot}\mathbf 1\cong \RR^{*}$.

These maps and their images can be identified on $\Mat_{2\times 2}(\CC)$ as follows.
\begin{align*}
\begin{array}{c|c|c|c|c}
&\lambda(\HH^{*})=(\Cl^{*})^{\uzero}&\text{image of $(\Cl^{*})^{\uzero}$}&\lambda(\HH^{*})i=(\Cl^{*})^{\uone}
&\text{image of $(\Cl^{*})^{\uone}$}\\
\hline
N&\det&\RR_{>0}&\det& \RR_{<0}\\
N'&\det& \RR_{>0}&-\det& \RR_{>0}\\
N^{2}=(N')^{2}&\det^{2}& \RR_{>0}&\det^{2}& \RR_{>0}
\end{array}
\end{align*}
\end{lemma}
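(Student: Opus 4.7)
The plan is to verify the three claims — scalar-valuedness on $\Cl^{*}_{hom}$, multiplicativity, and the explicit identification with signed determinants — by direct computation using the quaternionic realisation $\Cl^{\overline 0} = \lambda(\HH)$, $\Cl^{\overline 1} = \lambda(\HH)\cdot i\mathbf 1$ from Theorem~\ref{Thm:Cl3}. The crucial technical fact I would lean on is that $e_{123}=i\mathbf 1$ is central in $\Cl_{3}$ and satisfies $(i\mathbf 1)^{2}=-\mathbf 1$.

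First I would handle the even part. For $x=\lambda(q)$ with $q\in\HH$, the sign table preceding the lemma shows that Clifford conjugation acts as $+$ on $\Cl^{0}$ and $-$ on $\Cl^{2}=\mathfrak{su}(2)$, which is precisely $\lambda$ applied to quaternionic conjugation; hence $\overline{\lambda(q)}=\lambda(\bar q)$, and the quaternionic norm identity yields $N(x)=\lambda(\bar q q)=N(q)\mathbf 1=\det(x)\mathbf 1\in\RR_{>0}\mathbf 1$. Since $\alpha$ is the identity on $\Cl^{\overline 0}$, one has $N'=N$ on the even part. For the odd part I would write $y=\lambda(q)\cdot i\mathbf 1$ with $q\in\HH^{*}$ and use that Clifford conjugation is $-$ on $\Cl^{1}$ and $+$ on $\Cl^{3}$, giving $\bar y=\lambda(\bar q)\,i\mathbf 1$; centrality of $i\mathbf 1$ together with $(i\mathbf 1)^{2}=-\mathbf 1$ then yield
\begin{align*}
N(y)\;=\;\bar y\,y\;=\;\lambda(\bar q)\lambda(q)(i\mathbf 1)^{2}\;=\;-N(q)\mathbf 1\;=\;\det(y)\mathbf 1\;\in\;\RR_{<0}\mathbf 1,
\end{align*}
using $\det(y)=i^{2}\det\lambda(q)=-N(q)$. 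The analogous computation with $t(y)=-\lambda(\bar q)\,i\mathbf 1$ gives $N'(y)=N(q)\mathbf 1=-\det(y)\mathbf 1\in\RR_{>0}\mathbf 1$. This simultaneously establishes scalar-valuedness on $\Cl^{*}_{hom}$ and all entries of the table for $N$ and $N'$; the row $N^{2}=(N')^{2}$ then comes for free.

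Once the images of $N, N'$ are placed in the centre $\RR\cdot\mathbf 1$ of $\Cl$, multiplicativity is a formality. Clifford conjugation is an anti-automorphism (stated in the excerpt), and since $t=\alpha\circ\overline{(\ )}$ with $\alpha$ an algebra automorphism, $t$ is also an anti-automorphism. Therefore, for $x,y\in\Cl^{*}_{hom}$,
\[
N(xy)\;=\;\overline{xy}\cdot xy\;=\;\bar y\,\bar x\,x\,y\;=\;\bar y\,N(x)\,y\;=\;N(x)\,\bar y\,y\;=\;N(x)N(y),
\]
the penultimate equality using that $N(x)\in\RR\cdot\mathbf 1$ commutes with $\bar y$. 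The identical argument gives $N'(xy)=N'(x)N'(y)$, and squaring handles $N^{2}=(N')^{2}$.

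The only substantive step is thus the bookkeeping of the signs of $\alpha, t, \overline{(\ )}$ on the four homogeneous pieces $\Cl^{0}, \Cl^{1}, \Cl^{2}, \Cl^{3}$, combined with reducing the odd case to the even one via centrality of $e_{123}$. The sign table already preceding the lemma does the first job, and once that is in hand a single line of computation in each case suffices; there is no genuine obstacle.
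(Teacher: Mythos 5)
Your proof is correct and takes essentially the same approach as the paper: reduce the odd case to the even (quaternionic) case by writing an odd element as $\lambda(q)\,e_{123}$ and exploiting centrality of $e_{123}=i\mathbf 1$ together with $e_{123}^{2}=-\mathbf 1$. The one place you are more careful than the paper is in the ordering of the argument: the paper obtains $N(y)$ on the odd part by invoking ``multiplicativity of the norm'' to write $N(ye_{123})=N(y)N(e_{123})$, whereas you first establish scalar-valuedness on each graded piece by computing $\overline y$ directly from the sign table, and only then derive multiplicativity from $N(x)\in\RR\cdot\mathbf 1$ being central; you also spell out the $N'$ entries and the group-homomorphism claim rather than leaving them as ``the rest of the table follows.''
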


\begin{proof}
We saw already earlier that for any quaternion $x$ and the corresponding matrix 
$\lambda(x)\in \lambda(\HH) =\Cl^{\uzero}$ one has $N(x) =\det\lambda(x)\mathbf 1$, 
with $\det\lambda(x) \geqslant 0$ . \\
If $y\in \Cl^{\uone}$, then $yi\in \Cl^{\uzero}$ and
$yi = ye_{123}\in\Mat_{2\times 2}(\CC)$. Thus, knowing $N$ on $ \lambda(\HH)$, we have first
$N(yi) =\det(yi)\mathbf 1 = -\det(y)\mathbf 1$, while by multiplicativity of the norm $N(yi)=N(y)N(e_{123})$
and $N(e_{123})= e_{123}^{2}= -\mathbf 1$, whence $N(y) =\det(y)\mathbf 1$ as claimed. 
The rest of the table follows.
\end{proof}
In the literature there are, confusingly enough, at least, three(!) competing definitions of the Pin groups 
and we record those now.
\begin{defn} 
With notation as introduced, set
\begin{align*}
\Pin^{+}(3) &= \ker(N\colon \Cl^{*}_{hom}\lto \RR^{*})\,,&&\text{the Pin group in \cite{ABS} or \cite{Knus}}\,,\\
\Pin'^{+}(3) &= \ker(N'\colon \Cl^{*}_{hom}\lto \RR^{*})\,,&&\text{the Pin group in \cite{Scharlau}}\,,\\
\PIN^{+}(3) &= \ker(N^{2}=(N')^{2}\colon \Cl^{*}_{hom}\lto \RR^{*})\,,&&\text{the Pin group in 
\cite{CBr, Porteous1, Lou}}\,.
\end{align*}
Clearly, $\Pin^{+}(3) , \Pin'^{+}(3)$ are subgroups of $\PIN^{+}(3)$, whence we call the latter the 
{\em big Pin group}.

Intersecting each of these Pin groups with $(\Cl^{*})^{\uzero}$ returns the same {\em spinor group\/} $\Spin^{+}(3)$.

As for notation, the superscript ``$+$'' refers to the positive definiteness of the quadratic form, the
argument $(3)$ reminds us that we are working with $3$--dimensional real space.
\end{defn}
The relation between the groups $\SU(2), \Un(2)^{\pm}$ and the (s)pin groups is now immediate.
\begin{theorem} \label{Thm:3Pindifferences}
One has
\begin{enumerate}[\rm(1)]
\item $\SU(2)=\Spin^{+}(3)=\Pin^{+}(3)$,
\item $\Un(2)^{\pm1}=\PIN^{+}(3) = \Pin'^{+}(3)$.
\end{enumerate}
\end{theorem}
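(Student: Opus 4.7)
The plan is to compute each of the three Pin groups directly from its definition as a kernel, using the tabulation of $N$, $N'$, $N^{2}$ on the even and odd components $(\Cl^{*})^{\uzero}=\lambda(\HH^{*})$ and $(\Cl^{*})^{\uone}=\lambda(\HH^{*})i$ provided by the preceding lemma, and then to match what comes out against the explicit descriptions of $\SU(2)$ and $\Un(2)^{\pm 1}$ given in Section \ref{Sec:Quaternions}. The proof is essentially bookkeeping: the substantive work is already contained in the lemma.

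First I would dispose of $\Pin^{+}(3)=\ker N$. The lemma tells us that $N$ restricted to $(\Cl^{*})^{\uzero}$ agrees with $\det$ and takes values in $\RR_{>0}\cdot\mathbf 1$, so $N(A)=\mathbf 1$ on the even part is equivalent to $\det A=1$, which singles out $\SU(2)\subset\lambda(\HH)$. On the odd part $N$ still agrees with $\det$ but now takes values in $\RR_{<0}\cdot\mathbf 1$, so the equation $N(A)=\mathbf 1$ has no solutions. Therefore $\Pin^{+}(3)=\SU(2)$ lives entirely in the even part, which at the same time identifies it with $\Spin^{+}(3)=\Pin^{+}(3)\cap (\Cl^{*})^{\uzero}$. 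This settles part (1).

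Next I would handle $\Pin'^{+}(3)=\ker N'$ and $\PIN^{+}(3)=\ker N^{2}$ in parallel. On the even part both $N'$ and $N^{2}=(N')^{2}$ reduce to the condition $\det A=1$, again yielding $\SU(2)$. On the odd part, $N'=-\det$ takes values in $\RR_{>0}$, so $N'(A)=\mathbf 1$ is equivalent to $\det A=-1$; writing such an odd element as $A=\lambda(x)i$ with $x\in\HH^{*}$, one has $\det A=-N(x)$, so the condition forces $x\in\HH_{1}$ and hence $A\in\lambda(\HH_{1})i$. The same conclusion holds for $N^{2}$, since $\det^{2}=1$ combined with $\det<0$ on the odd part again forces $\det=-1$. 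In both cases the group in question is $\SU(2)\cup\lambda(\HH_{1})i$, which by the final corollary of Section \ref{Sec:Quaternions} is exactly $\Un(2)^{\pm 1}$.

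The only place one might expect difficulty is the sign bookkeeping for $N$ on the odd part, but this was already settled in the lemma by using multiplicativity and the single calculation $N(e_{123})=e_{123}^{2}=-\mathbf 1$; once that sign is in hand, everything else follows mechanically. A small cosmetic remark: in each case one is taking a kernel of a map with target $\RR^{*}\cdot\mathbf 1\cong\RR^{*}$, so "kernel" means the preimage of $\mathbf 1$, which is why the $\det=-1$ stratum survives for $N'$ and $N^{2}$ but not for $N$. Collecting the four identifications completes the proof.
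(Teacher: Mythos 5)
Your proof is correct and follows essentially the same route as the paper's: both rely on the preceding lemma tabulating the restrictions of $N$, $N'$, $N^{2}$ to the even and odd components $\lambda(\HH^{*})$ and $\lambda(\HH^{*})i$, and then compute each kernel by cases. The paper states its version a bit more compactly (deriving $\ker N' = \ker (N')^{2} = \ker N^{2}$ in one step from the fact that $N'$ is everywhere positive, rather than treating $N'$ and $N^{2}$ in parallel as you do), but the bookkeeping and conclusions are the same.
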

\begin{proof}
We saw above that $\Cl^{*}_{hom}= \lambda(\HH^{*})\cup \lambda(\HH^{*})i$. Now the norm $N$ is positive
on $\lambda(\HH^{*})$, negative on $\lambda(\HH^{*})i$, thus, $\ker(N) = \lambda(\HH^{*})\cap \det^{-1}(1)
=\lambda(\HH_{1})$, by definition of $\HH_{1}$. We already identified $\HH_{1}$ with $\SU(2)$ before (see \ref{unitquaternions=SU(2)}),
thus, $\SU(2)=\Pin^{+}(3)$. Because $\Pin^{+}(3)$ is already contained in $(\Cl^{*})^{\uzero}$, it agrees with 
$\Spin^{+}(3)$.

Now $\Un(2)^{\pm1}=\SU(2)\cup\SU(2)i =\PIN^{+}(3)$, as $\ker N^{2} =N^{-1}(\{\pm 1\})$. Also, 
$\ker N^{2} = \ker (N')^{2} =\ker N'$, as $N'$ is everywhere positive. Finally, intersecting any of the three
Pin groups with $(\Cl^{*})^{\uzero}$ yields the same group $\Spin^{+}(3)=\SU(2)$.
\end{proof}

\begin{remark} \label{Rmk:3PinDefs}
Why are there three different kinds of Pin groups? The authors do not really know, but there are some subtle
differences in their uses as we will see below when considering this in the context of general Clifford algebras 
associated to quadratic forms over fields.

It seems that geometers and physicists prefer the big version, as that group surjects onto the orthogonal group,
see below, while algebraists or arithmetic geometers seem to prefer the ``small'' version, as it allows 
for introduction of the {\em spinor norm\/} on the orthogonal group, with the small pin group mapping 
onto its kernel. We do not study this spinor norm here further.

\end{remark}

Having clarified the relationship between $\SU(2)$ and $\Un(2)^{\pm 1}$ and the (s)pin groups, we now
turn to the surjectivity of $\rho$ and $\widetilde\rho$.

\section{Reflections} \label{Sec:Reflections}
Let $v = x_{1}e_{1}+ x_{2}e_{2} + x_{3}e_{3}$ be a nonzero real linear combination 
of the Pauli matrices, so that $v^{2}=  (x_{1}^{2} +x_{2}^{2}+ x_{3}^{2})\mathbf 1\neq 0 \in \Cl^{\uzero}$. It follows 
that $v$ is an invertible element in $(\Cl^{*})^{\uone}$, in particular, $\tfrac{1}{\sqrt{Q(v)}}v\in \Un(2)^{\pm 1}$,
and as a matrix in $\Mat_{2\times 2}(\CC)$ it represents a complex reflection.

\begin{lemma}
For $v$ as just specified, $\widetilde\rho(v)\in\Or(3)$ is a reflection.
\end{lemma}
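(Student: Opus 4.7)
The plan is to reduce the claim to the classical Clifford-algebra formula for a reflection. First I would normalize: replacing $v$ by $u := v/\sqrt{Q(v)}$, the rescaled element lies in $\Un(2)^{\pm 1}$ by the comment preceding the lemma, and $u^{2} = \mathbf{1}$, so that $u^{-1} = u$. A direct computation in matrix form yields $\det u = -1$, and hence
$$ \widetilde\rho(u)(B) \;=\; (\det u)\, u B u^{-1} \;=\; -\,u B u \qquad \text{for all } B \in \su(2) = \Cl^{2}. $$

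Next, I would transport this action back to $\Cl^{1}$, where the standard Clifford reflection identity lives. The pseudoscalar $e_{123} = i\mathbf{1}$ is central in $\Cl$ by Theorem~\ref{Thm:Cl3}, so right multiplication by it furnishes an $\RR$-linear isomorphism $\psi \colon \Cl^{1} \to \Cl^{2}$, $w \mapsto w e_{123}$. Centrality gives the intertwining
$$ \widetilde\rho(u)\bigl(\psi(w)\bigr) \;=\; -u\,(w e_{123})\,u \;=\; -(u w u)\,e_{123} \;=\; \psi(-u w u), $$
so that $\widetilde\rho(u)$ on $\su(2)$ is conjugate via $\psi$ to the map $g \colon w \mapsto -u w u$ on $\Cl^{1}$. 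That $\psi$ is an isometry from $(\Cl^{1}, Q)$ onto $(\su(2), \det)$ is checked on the orthonormal basis: $\psi(e_{1}) = e_{23}$, $\psi(e_{2}) = -e_{13}$, $\psi(e_{3}) = e_{12}$, each of which is a unit vector for $\det$ by the explicit matrices in \ref{Pauli-matrices}.

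Finally, I would invoke the classical Clifford identity: for the unit vector $u \in \Cl^{1}$, writing an arbitrary $w \in \Cl^{1}$ as $w = \alpha u + w'$ with $w' \perp u$ (equivalently, $u w' + w' u = 0$) gives $u w u = \alpha u^{3} + u w' u = \alpha u - w' u^{2} = \alpha u - w'$, whence $g(w) = -\alpha u + w'$ is precisely the reflection in the hyperplane $u^{\perp} \subset \Cl^{1}$. Because $\psi$ is an isometry, $\widetilde\rho(u)$ on $\su(2) \cong \RR^{3}$ is therefore also a reflection, which establishes the claim. The one place I would proceed most carefully is the sign bookkeeping in verifying that $\psi$ is indeed an isometry with the correct convention --- straightforward on the basis above, but easy to botch --- and this is really the only delicate point of the argument.
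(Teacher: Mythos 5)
Your proof is correct and is essentially the same argument as the paper's. The paper writes $iv, iw$ for elements of $\su(2)$ and computes $\widetilde\rho(v)(iv)=-iv$, $\widetilde\rho(v)(iw)=iw$ for $w\perp v$; you phrase this as transport along the isometry $\psi(w)=we_{123}$ (which, since $e_{123}=i\mathbf 1$ is central, is exactly left multiplication by $i$) followed by the classical Clifford identity $uwu = \alpha u - w'$ on $\Cl^1$. Your version is a bit more scrupulous about the normalization $u=v/\sqrt{Q(v)}$ (the paper tacitly assumes $v^2=1$) and about verifying that $\psi$ is an isometry, but the underlying computation is the same.
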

\begin{proof}
  Note that $iv \in  \su(2)$ and $v^2=1$.   So we compute that
  $$\tilde \rho (v)(iv) = (\det v) v (iv) v^{-1}
   = -iv.$$
  Note that the inner product can be represented as
  $vw+wv = 2(v \cdot w)\mathbf 1$ so if we suppose that $w$ is a vector perpendiculat to $v$, this means that $vw+wv=0$.
    So we get that
  $$\tilde\rho(v)(iw)  = (\det v) v iw v^{-1}
   = (-1) (-iw)(v)(v^{-1}) =iw.$$
\end{proof}

\begin{corollary}
  The maps $\tilde{\rho}$ and $\rho$ are surjective.
\end{corollary}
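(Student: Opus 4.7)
The plan is to bootstrap from the preceding lemma via the Cartan--Dieudonn\'e theorem. That lemma already shows that each unit vector $v = x_1 e_1 + x_2 e_2 + x_3 e_3 \in (\Cl^*)^{\uone}$ maps under $\widetilde\rho$ to a reflection in $\Or(3)$; the two calculations $\widetilde\rho(v)(iv) = -iv$ and $\widetilde\rho(v)(iw) = iw$ for $w \perp v$ identify its mirror as the hyperplane orthogonal to $iv$ inside $\su(2)$. As $v$ sweeps out the unit sphere in the span of the Pauli matrices, the vectors $iv$ sweep out all of $\SS^2 \subset \su(2) \cong \RR^3$, so \emph{every} reflection of $\Or(3)$ already lies in the image of $\widetilde\rho$.

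To obtain surjectivity of $\widetilde\rho$, I would then invoke the Cartan--Dieudonn\'e theorem recalled in Section \ref{sec:notation}: every element of $\Or(3)$ is a product of at most three reflections. Since $\widetilde\rho$ is a group homomorphism whose image contains all reflections, that image must be all of $\Or(3)$.

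For $\rho$, given any $g \in \SO(3)$ I would write $g = r_1 r_2$ as a product of two reflections (the orientation-preserving case of Cartan--Dieudonn\'e, equivalently Euler's theorem), pick unit vectors $v_1, v_2 \in (\Cl^*)^{\uone}$ with $\widetilde\rho(v_i) = r_i$, and exploit the fact that the product of two odd elements in the $\ZZ/2\ZZ$-graded Clifford algebra lies in its even part. Hence $v_1 v_2 \in (\Cl^*)^{\uzero} \cap \Un(2)^{\pm 1} = \SU(2)$, and $\rho(v_1 v_2) = \widetilde\rho(v_1 v_2) = g$. I foresee no real obstacle here: the geometric content is entirely discharged by the preceding lemma, and what remains is the standard reflection decomposition in $\Or(3)$ together with the trivial parity observation in the Clifford grading.
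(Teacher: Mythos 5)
Your proof is correct and follows essentially the same route as the paper: use the preceding lemma plus Cartan--Dieudonn\'e for surjectivity of $\widetilde\rho$, then write $g \in \SO(3)$ as a product of two reflections, lift each to an odd element of $\Un(2)^{\pm 1}$, and observe that the product lands in $\SU(2)$. The paper phrases this last step via determinants (the lifts have $\det = -1$, so the product has $\det = 1$), while you phrase it via the $\ZZ/2\ZZ$-grading; these are interchangeable, and your explicit remark that $iv$ sweeps out all of $\SS^2$ makes the ``every reflection is hit'' step a bit more airtight than the paper's terse version.
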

\begin{proof}
  The above lemma shows that we can obtain any reflection in the image of $\tilde{\rho}$, since by the Cartan--Dieudonn\'e theorem, any element of $\Or(3)$ is a product of at most 3 reflections.  Now we consider the surjectivity of $\rho$.  Let $x \in \SO(3)$.  Then there are two reflections $a,b$ in $\Or(3)$ such that $x=ab$.  There are two elements $v,w \in \Un(2)$ with determinant $-1$ such that $ a =\tilde{\rho}(v), b=\tilde{\rho}(w)$.  So we see that $x=\tilde{\rho}(vw)$.
\end{proof}

Thus we have commented on all the maps in the square of Theorem \ref{thm:main} and proven all claims. \qed %In the following 

\section{General Clifford algebras} \label{Sec:CliffordGeneral}
\begin{sit}
In this part of the paper, we we will look at general Clifford algebras and their (s)pin groups and how they fit into square diagrams similar to \eqref{Diag:squareGroups}, see Theorem \ref{Thm:squaregeneral}. We end the paper with some examples of real Clifford algebras in small dimensions.

 First recall some relevant definitions and facts from \cite[\S 9]{ALG9} and \cite{ABS}, see
\cite{Scharlau1, Scharlau} for a nice exposition. When it comes to Clifford algebras of any signature over the real numbers, \cite{Lou} and \cite{CBr} are the most valuable references.
\end{sit}

\subsection*{Quadratic Spaces}
The following makes sense over any commutative ring $R$.
\begin{defn}
A {\em quadratic form\/} on an $R$--module $V$ over $R$ is a map $q:V\to R$ that satisfies
\begin{enumerate}[\rm (a)]
\item $q(rv)=r^{2}q(v)$ for all $r\in R, v\in V$, and
\item The {\em polarization\/} $b_{q}:V\times V\to R$ of $q$, given by
\begin{align*}
b_{q}(v,w) = q(v+w)-q(v)-q(w)\,,\quad \text{for $v,w\in V$,}
\end{align*}
is {\em $R$--bilinear}. 
As it is always symmetric, it is often also just called the {\em bilinear symmetric form\/} associated with $q$.
\end{enumerate}

The pair $(V,q)$ is then called
%\footnote{In \cite{Knus}, the term ``quadratic module'' is reserved for the case when $V$ is a finite
%(= finitely generated = finitely presented) projective $R$--module.}
a {\em quadratic module} over $R$.

A morphism $\vp\colon (V,q)\to (V',q')$ is a $R$--linear map $\vp\colon V\to V'$ such that 
$q'(\vp(v))=q(v)$ for all $v\in V$. Such morphisms% 
\footnote{In \cite{Scharlau} only {\em injective\/} such maps are allowed.}
are also called {\em orthogonal maps}.
With these morphisms, quadratic $R$--modules form a category $\mathsf{Quad}_{R}$. This category admits
arbitrary direct sums, $\oplus_{i\in I}(M_{i},q_{i}) = \left(\oplus_{i\in I}M_{i}, q(\sum_{i}m_{i}) = 
\sum_{i}q_{i}(m_{i})\right)$.

\end{defn}

\begin{remark}
If $(V,q)$ is a quadratic module, then for $v\in V$ one has $b_{q}(v,v)=2q(v)$. Thus, a symmetric bilinear form
recovers its underlying quadratic map $q$ as soon as $b_{q}(v,v)\in 2R$ for all $v\in V$ and $2$ is a 
non-zero-divisor in $R$. Both conditions are, of course, satisfied as soon as $2$ is invertible in $R$.
In that case, we call $(v,w) = \tfrac{1}{2}b_{q}(v,w)$ the scalar product defined by $q$. Note that then
$(v,v) = q(v)$, and the data given by the quadratic form or its associated scalar product are equivalent.

This is the key reason to restrict to the case that $2\in R$ is invertible.
\end{remark}

\begin{sit}
Of particular importance is the {\em orthogonal group\/} $\Or(q)$ of $(V,q)$, the automorphism group of $(V,q)$ in 
the category of quadratic modules. By the very definition, this is a subgroup of $\GL(V)=\Aut_{R}(V)$. 
In fact, the whole theory of quadratic modules and of Clifford algebras is nothing but an attempt to understand 
the orthogonal groups thoroughly.
\end{sit}

\begin{example}
If $R$ is a field and $V$ a finite dimensional vector space over it, then for any quadratic form $q$ on $V$
one can consider a basis $e_{1},...., e_{n}$ of $V$ and the matrix $B=(b_{q}(e_{i}, e_{j}))$. 
If further $\car R\neq 2$, then the quadratic form can be diagonalized by the Gram--Schmidt algorithm, in that 
there exists a basis $e_{1},...., e_{n}$ of $V$ such that the matrix $B=(b_{q}(e_{i}, e_{j}))$ is diagonal.

The quadratic form $q$ is {\em regular\/} (or {\em nonsingular\/}), if for some basis, and then for any, the 
matrix $B$ is invertible. If the quadratic form is diagonalized, then this means simply that $b_{q}(e_{i}, e_{i})
= 2q(e_{i}) \neq 0$ for each $i$.  
\end{example}

\begin{example}
The key examples are given by the regular quadratic forms on the real vector space 
$V=\bigoplus_{i=1}^{n} \RR e_{i}$ over $R=\RR$ for some positive integer $n$.
 
By Sylvester's Inertia Theorem, one can assume that, up to base change, 
the quadratic form is one of
\begin{align*}
q_{m,n-m}(x_{1}e_{1}+\cdots + x_{n}e_{n}) = x_{1}^{2} +\cdots + x_{m}^{2} -(x_{m+1}^{2}+\cdots +x_{n}^{2})
\end{align*}
where $m=0,...,n$. The form $q_{n} = q_{n,0}$ is the usual positive definite Euclidean norm, while
$q_{0,n} = -q_{n,0}$ is the negative definite form. The other forms are indefinite in that there are nonzero
{\em isotropic\/} vectors, elements $v\in V$ such that $q(v)=0$. The isotropic vectors form the {\em light cone\/}
$\{v\in V\mid q(v)=0\}\subset V$, a quadratic hypersurface.

One writes $\RR^{m,n-m}$ for the quadratic space $(\RR^{n}, q_{m,n-m})$.
\end{example}

\begin{remark} \label{Rmk:reflections}
If $r\in R$ is some element from the base ring $R$, then with $(V,q)$ also $(V,rq)$ is a quadratic $R$--module
in that $(rq)(v) = r(q(v))$ is again $R$--quadratic. Further, if $\vp\in \Or(q)$, then $\vp$ is, trivially, in $\Or(rq)$,
thus $\Or(q)\subseteq \Or(rq)$. If $r\in R^{*}$, the group of invertible elements of $R$, then $\Or(q) = \Or(rq)$,
so that $q$ and $rq$ define the {\em same\/} orthogonal group.

This comment should be kept in mind in what follows.
\end{remark}

\subsection*{Reflections, Take two}
We next exhibit special elements in $\Or(q)$, those given by {\em reflections}.
\begin{sit}
\label{reflect}
Elements $v,w$ in $V$ are {\em orthogonal\/} with respect to $q$ if $b_{q}(v,w)=0$. If for some $v\in V$ one has 
$q(v)\in R^{*}$, the (multiplicative) group of invertible elements in $R$, then the map 
\begin{align*}
s_{v}\colon V\to V\,,\quad s_{v}(w) = w - b_{q}(v,w)q(v)^{-1}v
\end{align*}
is defined, $R$--linear, and satisfies $s_{v}(w)=w$ if $w$ is orthogonal to $v$, while $s_{v}(v) =-v$, 
because $(v,v)= 2q(v)$.
Moreover, $s_{v}$ is an orthogonal endomorphism of $V$, as
\begin{align*}
q(s_{v}(w)) &= q(w) + q(-b_q(v,w)q(v)^{-1}v) +(w,-b_q(v,w)q(v)^{-1}v)\\
&=q(w) + b_q(v,w)^{2}q(v)^{-2}q(v) - b_q(v,w)q(v)^{-1}b_q(w,v)\\
&=q(w)\,,
\end{align*}
using the definition of the polarization in the first equality, the quadratic nature of $q$ and bilinearity of the 
polarization in the second, and symmetry in the last. Further, $s_{v}^{2}=\id_{V}$, as
\begin{align*}
s_{v}^{2}(w) &= s_{v}(w - b_q(v,w)q(v)^{-1}v)\\
&=w - b_q(v,w)q(v)^{-1}v - b_q(v,w - b_q(v,w)q(v)^{-1}v)q(v)^{-1}v\\
&=w - b_q(v,w)q(v)^{-1}v  -b_q(v,w)q(v)^{-1}v + b_q(v,w)q(v)^{-1}b_q(v, v)q(v)^{-1}v\\
&=w - 2b_q(v,w)q(v)^{-1}v + 2b_q(v,w)q(v)^{-1}v\\
&=w\,,
\end{align*}
where the first and second equality just use the definition of $s_{v}$, the third uses linearity of $(v,\ )$, the fourth
invokes again that $(v,v) = 2q(v)$, and the last one is obvious.

\begin{example}
In the most important case, when $R$ is a field, thus, $V$ a vector space, then $q(v)\in R^{*}$ just means
$q(v)\neq 0$. If also $(v,v)=2q(v)\neq 0$, that is, $R$ is not of characteristic $2$, then the set $v^{\perp}$ of 
elements orthogonal to $v$ forms a hyperplane in $V$, kernel of the $R$--linear form $(v,\ ):V\to R$, and $s_{v}$ is 
the usual reflection in that hyperplane.
\end{example}

Because of this example, $s_{v}$ is generally called the {\em reflection in the hyperplane perpendicular to $v$},
regardless of the nature of the quadratic form.
\end{sit}

\begin{sit}
Note that $s_{rv} = s_{v}$ for any $r\in R^{*}$, as $q$ is a quadratic form. 
More generally, if $v, v'\in V^{*}$, when is $s_{v}=s_{v'}$? This requires that for any $w\in V$ we have
$s_{v}(w) = s_{v'}(w)$, equivalently, 
\begin{align*}
(v,w)q(v)^{-1}v = (v',w)q(v')^{-1}v'\,.
\end{align*}
Taking $w=v$, this specializes to
\begin{align*}
2v&= (v',v)q(v')^{-1}v'\,.
%\intertext{or}
%2q(v')v&= (v',v)v' 
\end{align*}
Taking instead $w=v'$, we get
\begin{align*}
(v,v')q(v)^{-1}v = 2 v'\,.
%\intertext{or}
%(v,v')v = 2q(v)v'
\end{align*}
Thus, if $2\in R^{*}$, then $s_{v}= s_{v'}$ if, and only if, $v\equiv v'\bmod R^{*}$.
\end{sit}

In light of this simple fact, we make the following definition.

\begin{defn}
Let $(V,q)$ be a quadratic $R$--module. With
\begin{align*}
V^{*}(q) &= \{v\in V\mid q(v)\in R^{*}\}\subseteq V
\end{align*}
we have a canonical map $s_{-}: V^{*}(q)\to \Or(q)$ that sends $v\in V^{*}(q)$ to $s_{v}\in \Or(q)$.
The map $s_{-}$ factors through the orbit space $R^{*}\backslash V^{*}$ and the resulting map,
still denoted $s_{-}:R^{*}\backslash V^{*}\to \Or(q)$ is injective if $2$ is invertible in $R$.
We call the set $\Refl(q) = \Imm(s_{-})\subseteq \Or(q)$ the set of reflections in $\Or(q)$.

If $r\in R^{*}$, then $V^{*}(q)= V^{*}(rq)$ and $\Or(q)=O(rq)$ and the map $s_{-}$ depends only on the class of 
$q\bmod R^{*}$. In particular, $\Refl(q) =\Refl(rq)$, cf.~Remark \ref{Rmk:reflections}. 
\end{defn}

\subsection*{Clifford Algebras}
\begin{defn}
Let $q:V\to R$ be a quadratic form.
The {\em Clifford algebra\/}  $\Cl(q)$ of $q$ is the quotient of the tensor algebra $T_{R}V$ 
modulo the two-sided ideal $I(q)$ generated by the elements of the form $v\otimes v-q(v)\cdot 1$ for $v\in V$.

The canonical composition $V=T^{1}_{R}V\into T_{R}V\onto \Cl(q)$ is denoted $i_{q}$, but we often
write $v\in\Cl(q)$ instead of $i_{q}(v)$. This is justified when $V$ is a projective $R$--module, as then $i_{q}$ 
is injective --- and also $R\cong R\cdot 1\subseteq \Cl(q)$; see \cite[IV, Cor. 1.5.2]{Knus} and 
\cite[\S 9 Thm 1]{ALG9} .
\end{defn}

\begin{remark}
\label{rem2.6}
For $v,w\in V$, one has $(v+w)^{\otimes 2} = v\otimes v+ v\otimes w + w\otimes v +w\otimes w$ in $T_{R}V$.
Passing to $\Cl(q)$ this equality becomes
\begin{align*}
q(v+w) &= q(v) + i_{q}(v)i_{q}(w) +  i_{q}(w)i_{q}(v) + q(w)\,,
\intertext{and dropping $i_{q}$ from the notation, this rearranges to}
(v,w)1&= vw + wv \in R\cdot 1\subseteq \Cl(q)\,.
\end{align*}

\end{remark}

\begin{sit}
Because the generators of $I(q)$ involve only terms of even tensor degree, the natural grading by tensor degree
on $T_{R}V$ descends to a $\ZZ/2\ZZ$--grading on the Clifford algebra, 
$\Cl(q)=\oplus_{\underline i\in\ZZ/2\ZZ}\Cl(q)^{\underline i}$. The summand $\Cl(q)^{\uzero}$ is the {\em even\/}
Clifford algebra defined by $q$, while the summand $\Cl(q)^{\uone}$ of {\em odd\/} elements is a bi-module over it. By definition, the elements in $i_{q}(V)$ are of odd degree. 

The subset of homogeneous elements in $\Cl(q)$ with respect to this $\ZZ/2\ZZ$--grading is denoted
$\Cl(q)_{hom}$ and $|\ |\colon \Cl(q)_{hom}\to \ZZ/2\ZZ$ is the map recording
the {\em degree\/} (or {\em parity\/}) of a homogeneous element. 
\end{sit}

\begin{sit}
Inside $\Cl(q)_{hom}$ we have the {\em graded centre\/} of $\Cl(q)$, defined as
\[
Z =\left \{u\in \Cl(q)_{hom}\mid \forall x\in \Cl(q)_{hom}: ux = (-1)^{|u||x|}xu \right\}\,.
\]
Note that this graded centre is generally not a ring as the sum of two homogeneous elements of different degrees is 
no longer homogeneous. However, with $Z^{\underline i} = Z\cap\Cl(q)^{\underline i}$ one has
\begin{itemize}
\item $Z=Z^{\uzero}\cup Z^{\uone}$,
\item $Z^{\uzero}\oplus Z^{\uone}$ is a graded--commutative $R$--subalgebra of $\Cl(q)$,
\item $Z^{\uzero}$ is a commutative $R$--algebra and constitutes the intersection of the
ordinary, ungraded centre of $\Cl(q)$ with the even Clifford algebra $\Cl(q)^{\uzero}$.
\end{itemize}

For later use, we also record the following: 
\begin{lemma}\label{lemma:gradedCentre}
  Let $(V,q)$ be a quadratic module over a field $R$ of characteristic not 2.  Then the graded centre of $\Cl(q)$ is $Z \cong Z^{\underline{0}} \cong R$.
  \end{lemma}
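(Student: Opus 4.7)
The plan is to reduce graded-centrality to graded-commutation with the generating subspace $V$, then work in a diagonalizing basis and conclude by a direct sign count. Throughout I shall assume $q$ is regular; see the final paragraph for why some such hypothesis is needed.

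For any fixed homogeneous $u \in \Cl(q)_{hom}$, the set $S_u := \{x \in \Cl(q)_{hom} : ux = (-1)^{|u||x|} xu\}$ contains $R \cdot 1$ and is closed under products of homogeneous elements (the signs multiply correctly using $|xy|=|x|+|y|$). Since $V$ generates $\Cl(q)$ as an $R$-algebra, a homogeneous $u$ lies in $Z$ if and only if $uv = (-1)^{|u|} vu$ for every $v \in V$. As $\car R \neq 2$, I pick an orthogonal basis $e_1,\dots,e_n$ of $V$ with $a_j := q(e_j) \in R^\times$; then $\Cl(q)$ has standard $R$-basis $\{e_S\}_{S \subseteq \{1,\dots,n\}}$ with multiplication determined by $e_j^2 = a_j$ and $e_i e_j = -e_j e_i$ for $i \neq j$. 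Writing a homogeneous $u = \sum_S c_S\, e_S$ (with $|S| \equiv |u| \pmod{2}$) and expanding the graded commutator $e_j u - (-1)^{|u|} u e_j$ for each $j$, a direct sign count shows that every term with $j \notin S$ cancels identically, while each term with $j \in S$ at position $m$ contributes $-2(-1)^m a_j c_S\, e_{S \setminus \{j\}}$. Linear independence of $\{e_T : j \notin T\}$ together with $a_j \in R^\times$ then force $c_S = 0$ whenever $S \ni j$; ranging over all $j$ kills every non-scalar coefficient, so $u \in R \cdot 1 \subseteq \Cl(q)^{\underline{0}}$. This gives simultaneously $Z^{\underline{1}} = 0$ and $Z^{\underline{0}} = R \cdot 1 \cong R$, whence $Z = Z^{\underline{0}} \cong R$.

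The substantive work is the bookkeeping in the commutator expansion, where one must carefully exhibit both the exact cancellation when $j \notin S$ and the nonzero factor $-2(-1)^m a_j$ when $j \in S$; both the invertibility of $2$ and the regularity of $q$ (giving $a_j \in R^\times$) enter here. The latter is not stated in the lemma but is essential: any nonzero $v$ in the radical of $q$ satisfies $vw + wv = b_q(v,w) = 0$ for every $w \in V$, hence lies in $Z^{\underline{1}}$ and contradicts the conclusion. I would therefore either assume or flag non-degeneracy of $q$ in the hypothesis.
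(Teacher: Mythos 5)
Your proof is correct, but it takes a genuinely different route from the paper, which simply cites Lam's book (\cite[Ch.~5, Thm.~2.1]{LamQuadratic}) for this fact. Your argument is the self-contained, hands-on version: reduce graded centrality to graded-commutation with the generating subspace $V$, diagonalize $q$ using $\car R\neq 2$, expand a homogeneous $u=\sum_S c_S e_S$ against each $e_j$, and read off from the sign count that terms with $j\notin S$ cancel while terms with $j\in S$ contribute $-2(-1)^m a_j c_S e_{S\setminus\{j\}}$; invertibility of $2$ and of each $a_j$ then forces all non-scalar coefficients to vanish. I checked the sign bookkeeping and it is right (with $k=|S|\equiv|u|$ one gets $(-1)^{m-1}-(-1)^{|u|+k-m}=-2(-1)^m$). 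What this buys you over the citation is transparency: one sees exactly where $\car R\neq 2$ and regularity enter. In particular, you are right to flag that regularity is a needed hypothesis not stated in the lemma: if $v\neq 0$ lies in the radical of $b_q$ then $vw+wv=b_q(v,w)=0$ for all $w\in V$, so $v\in Z^{\underline 1}$ and the conclusion $Z\cong R$ fails (the exterior algebra being the extreme case). The paper does assume regularity in the places where the lemma is invoked, so this is a hypothesis that should indeed be added to the lemma statement rather than a mathematical error elsewhere, but your observation is valid and worth recording.
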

  
  \begin{proof}
  See e.g.~\cite[5,Thm.2.1, p.~109]{LamQuadratic}.
  \end{proof}
  \end{sit}

\begin{sit}
  Let $A$ be an associative $R$ algebra.  A {\em Kummer subspace} $W$ of $A$ is an $R$-submodule of $A$ such that $w^2 \in R$ for all $w \in W$. 
  Let $\mathsf{AlgKgr_R}$ be the category of pairs $(A,W)$ of $\ZZ/2\ZZ$ graded associative $R$-algebras $A$ with a Kummer subspace $W \subseteq A^{\underline{1}}$.
  Morphisms are given by $\phi:(A,W) \to (B,U)$ where $\phi:A \to B$ is a graded $R$-algebra homomorphism and $\phi(W) \subseteq U$.
The assignment $(A,W) \mapsto (W,(\ )^{2})$ from $\mathsf{AlgKgr}_R$ to quadratic $R$--modules defines a forgetful functor
$
F: \mathsf{AlgKgr}_{R}\lto \mathsf{Quad}_{R}
$
%from the category $\mathsf{Alg}_{R}$ of $R$--algebras to that of quadratic $R$--modules
and this functor 
admits a left adjoint,
\begin{align*}
\Cl:\mathsf{Quad}_{R}\lto \mathsf{AlgKgr}_{R}
\end{align*}
that sends the quadratic $R$--module $(M,q)$ to the pair $(\Cl(q),M)$ of the Clifford algebra and the Kummer subspace $M\subseteq \Cl(q)$.
Note that the adjunction $\mathrm{id} \xrightarrow{\simeq} F \circ \Cl$ is a natural isomorphism so $\Cl$ is a fully faithful functor to $\mathsf{AlgKgr}_R$.

 \end{sit}

\begin{sit} \label{CliffordInvolution}
Let $\Aut_{R-alg}^{\uzero}(\Cl(q))$ denote the group of degree-preserving $R$--algebra automorphisms of 
$\Cl(q)$.

Each Clifford algebra comes equipped with special degree-preserving $R$--linear endomorphisms:
\begin{itemize}
\item[$\Or(q)$] The $R$--linear automorphisms $\vp:V\to V$ that respect the quadratic form in that
$q(\vp(v)) = q(v)$ for $v\in V$ form the orthogonal group $\Or(q)$. These are thus the automorphisms of $(V,q)$
within the category of quadratic $R$--modules.

Each such automorphism of $V$ extends uniquely to a degree preserving $R$--algebra automorphism of 
$\Cl(q)$, thus, defines a group homomorphism $\Or(q)\to \Aut_{R-alg}^{\uzero}(\Cl(q))$. 

For a quadratic $R$--module $V$ with $i_{q}$ injective, 
this group homomorphism is injective too, as its image consists precisely of those 
$\Phi\in \Aut_{R-alg}^{\uzero}(\Cl(q))$ that preserve $i_{q}(V)\cong V$, since $\Phi$ is determined uniquely by its action on $V$.
\item[($\alpha$)] 
In particular, multiplication by $-1$ on $V$ is in $\Or(q)$ and the corresponding unique algebra 
automorphism $\alpha:\Cl(q)\to \Cl(q)$ is called the {\em principal automorphism\/} of $\Cl(q)$ in \cite[\S 9]{ALG9},
or the {\em grade involution\/} in \cite{Lou}. 
This automorphism is the identity on $\Cl(q)^{\uzero}$, and acts through multiplication by $-1$ on $\Cl(q)^{\uone}$.
Thus, if $2\neq 0$ in $R$, then the $\ZZ/2\ZZ$--decomposition of $\Cl(q)$ is given by the $\pm 1$ eigenspaces of 
$\alpha$. Clearly, $\alpha^{2}=\id_{\Cl(q)}$.

Physicists call $\alpha$ the {\em (space time) parity operator}, see \cite[p.18]{CBr}.

\item[($t$)] The unique algebra antiautomorphism $t:\Cl(q)\to \Cl(q)$ that is the identity on $V\subseteq \Cl_{q}$.
It is denoted $\beta$ and called the {\em principal antiautomorphism\/} in \cite[\S 9]{ALG9}, and called \emph{reversion} in \cite{Lou},  while in \cite{ABS}
it is called the {\em transpose\/} and is denoted $t$ as here.

This antiautomorphism satisfies $t^{2}=\id_{\Cl(q)}$, thus, is an (anti-)involution.
\item[($t\alpha$)]
The composition $t\circ \alpha=\alpha\circ t:\Cl(q)\to \Cl(q)$ is called {\em conjugation} and denoted
$\overline x = t(a(x))$ for $x\in \Cl(q)$. It is an involutive antiautomorphism --- the unique antiautomorphism
that restricts to multiplication by $-1$ on $i_{q}(V)$.
\end{itemize}
%Each of these maps preserves the $\ZZ/2\ZZ$--degrees.
\end{sit}

\begin{sit}
In analogy with the quaternions, the Clifford (square) {\em norm\/} of $x\in \Cl(q)$ is%
\footnote{Here we follow \cite[p. 228]{Knus}, which is opposite to the definition in \cite{ABS}, where
$N(x)=x\overline x$ is used.} 
$N(x) = \overline x x\in\Cl(q)^{\uzero}$.
% it is denoted $N(x)$ in \cite{ABS}. 
In contrast to the quaternions though, usually $N(x)\neq x\overline x$ when $x\not\in i_{q}(V)$.
For $v\in V$, however, $\overline v = -v$ and one still has 
$v\overline v = \overline v v= -v^{2} =-q(v)1\in R\cdot 1\subseteq \Cl(q)^{\uzero}$.
Further, $\overline{N(x)} =N(x)$ for any $x\in \Cl(q)$, as 
$t\alpha(t\alpha(x)\cdot x) =t\alpha(x)\cdot  t\alpha(t\alpha(x)) = t\alpha(x)\cdot x$.
\end{sit}

\begin{remark} \label{Rmk:N=N'}
Once again, this definition is not uniquely used in the literature. For example, Bourbaki uses instead
$N'(x) = t(x)x\in \Cl(q)^{\uzero}$ and so does Scharlau in \cite[p. 335]{Scharlau}. 
The argument for this usage is that then $N'$ defines a quadratic form on $\Cl(q)$ that extends 
$q$ rather than $-q$ on $i_{q}(V)$.
Note that $N=N'$ on the even Clifford algebra $\Cl(q)^{\uzero}$, while $N'=-N$ on the odd part.

On the other hand, \cite {Lou} or \cite[p. 334]{Scharlau} use all of $\Cl(q)^{*}$ as the domain of $i_{u}$, 
which then is defined as $i_{u}(x) =\alpha(u)xu^{-1}$ in \cite{Scharlau}.

Our usage follows \cite{Knus}.
\end{remark}

\begin{proposition}
Denote $\Cl(q)^{*}_{hom}$  the multiplicative group of homogeneous invertible elements in $\Cl(q)$. The map
\begin{align*}
i\colon \Cl(q)^{*}_{hom}&\lto \Aut_{R-alg}^{\uzero}(\Cl(q))
\end{align*}
that sends $u\in \Cl(q)^{*}_{hom}$ to the graded inner automorphism $i_{u}$ with 
$i_{u}(x) = (-1)^{|u||x|}uxu^{-1}$ for $x\in \Cl(q)_{hom}$ is a group homomorphism
with kernel $Z\cap  \Cl(q)^{*}_{hom}$ .
Moreover, if $(V,q)$ is regular and $R$ is a field of characteristic not 2, then we have an exact sequence
$$ 1 \lto R^* \lto  \Cl(q)^{*}_{hom} \xrightarrow{\phantom{aa}i\phantom{aa}} \Aut_{R-alg}^{\uzero}(\Cl(q)) \lto 1.$$
\end{proposition}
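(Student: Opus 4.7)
The first assertion is a direct sign-tracking computation. For homogeneous $u,x,y\in\Cl(q)$ one has $u(xy)u^{-1} = (uxu^{-1})(uyu^{-1})$ and $(-1)^{|u|(|x|+|y|)} = (-1)^{|u||x|}(-1)^{|u||y|}$, so $i_u$ is an $R$-algebra homomorphism; it is degree-preserving because conjugation by a homogeneous element is, and its two-sided inverse is $i_{u^{-1}}$. To see that $u\mapsto i_u$ is a group homomorphism, verify $i_u\circ i_v = i_{uv}$ by using $|vxv^{-1}|=|x|$ for homogeneous $v,x$ and combining the signs $(-1)^{|v||x|}(-1)^{|u||x|} = (-1)^{(|u|+|v|)|x|}$. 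An element $u$ lies in $\ker i$ precisely when $(-1)^{|u||x|}uxu^{-1} = x$ for all homogeneous $x$, that is, $ux = (-1)^{|u||x|}xu$; this is exactly the graded-commutation relation defining $Z$, so $\ker i = Z\cap \Cl(q)^{*}_{hom}$.

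For the second claim, under the added hypotheses $R$ a field of characteristic not $2$ and $(V,q)$ regular, Lemma~\ref{lemma:gradedCentre} identifies $Z$ with $R$. Hence the invertible elements of $Z$ are exactly $R^{*}\subseteq \Cl(q)^{\underline 0}\subseteq \Cl(q)^{*}_{hom}$, which gives exactness at $\Cl(q)^{*}_{hom}$.

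Surjectivity of $i$ onto $\Aut^{\uzero}_{R\text{-}alg}(\Cl(q))$ is the main obstacle; it is a graded Skolem--Noether statement. The plan is to use that, under the standing hypotheses, $\Cl(q)$ is graded central simple over $R$ (graded simple with graded center $R$, again by Lemma~\ref{lemma:gradedCentre}). Given $\phi\in \Aut^{\uzero}_{R\text{-}alg}(\Cl(q))$, twist the regular left $\Cl(q)$-module structure on $\Cl(q)$ by setting $a\cdot_{\phi}x := \phi(a)x$. Both the twisted and untwisted structures are graded left $\Cl(q)$-modules of the same $R$-dimension; by graded simplicity (graded Wedderburn) they are graded-isomorphic, and any such isomorphism is right-multiplication by some $u^{-1}\in \Cl(q)^{*}_{hom}$. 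Unwinding gives $\phi(a) = u a u^{-1}$ up to the graded sign $(-1)^{|u||a|}$ imposed by the compatibility with the grading convention, so $\phi = i_u$.

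The delicate point is that the classical (ungraded) Skolem--Noether theorem does not apply uniformly: for even $\dim V$ the algebra $\Cl(q)$ is already ungraded central simple, but for odd $\dim V$ its ungraded center is a quadratic extension of $R$ and the ungraded automorphism group is larger than what we want. The graded perspective is therefore essential; alternatively, one may split the argument by parity and appeal to the structure theorem (matrix algebras over $R$, over a quadratic extension, or over a quaternion algebra, according to $\dim V$ and the discriminant) to produce the homogeneous conjugator $u$ by hand in each case.
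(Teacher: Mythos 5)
Your approach matches the paper's: both parts rest on identifying the kernel with the graded centre (which, via Lemma~\ref{lemma:gradedCentre}, is $R^*$ under the stated hypotheses) and then invoking graded central simplicity plus a graded Skolem--Noether theorem for surjectivity; the paper simply cites Vela for the latter. Your sign-tracking verification that $i$ is a well-defined group homomorphism with kernel $Z\cap\Cl(q)^{*}_{hom}$, and the observation that the graded viewpoint is essential because for $\dim V$ odd the ungraded centre is strictly larger than $R$, are both correct and worth having on record.

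However, the sketch you give of the graded Skolem--Noether argument has a gap. Twisting only the \emph{left} module structure by $\phi$ and invoking graded simplicity produces a left-module isomorphism $\psi\colon \Cl(q)\to \Cl(q)_{\phi}$, but such a $\psi$ is determined by $\psi(1)=v$ via $\psi(a)=\phi(a)v$ --- it is $\phi$ followed by right multiplication, not right multiplication alone, and by itself it does not force $\phi$ to be an inner automorphism. To get the conjugation formula you must also impose compatibility with the \emph{right} action: work with $\Cl(q)$ as a graded module over the graded enveloping algebra $\Cl(q)\,\hat\otimes\,\Cl(q)^{\mathrm{op}}$ (equivalently, as a graded bimodule). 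Then the twisted module satisfies both $\psi(a)=\phi(a)\psi(1)$ and $\psi(a)=\psi(1)a$ (up to graded signs), so that $\phi(a)\psi(1)=\psi(1)a$ and $u=\psi(1)^{-1}$ (or $\psi(1)$, depending on conventions) is the desired homogeneous conjugator, with the sign $(-1)^{|u||a|}$ emerging from the Koszul convention in the graded tensor product. Once this is supplied, or you simply cite the graded Skolem--Noether theorem as the paper does, the argument is complete.
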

\begin{proof}
  The kernel of $i$ is given by the graded centre which is described in Lemma~\ref{lemma:gradedCentre}.  The fact that $i$ is surjective follows from the fact that the Clifford algebra is a graded central simple $R$-algebra when $(V,q)$ is regular, and the graded Skolem-Noether Theorem in Section 5 of \cite{Vela}.
\end{proof}

\begin{remark}
Prior to \cite{ABS}, such as in \cite{ALG9}, instead of assigning the graded inner automorphism to $u$,
one assigned to $u\in \Cl(q)^{*}$ the inner automorphism $j_{u}(x) = uxu^{-1}$. 

This has the unpleasant side effect that $j$ does not always surject onto $\Or(q)$ as noted in
\cite{ALG9} and \cite{CBr}.

\end{remark}

\begin{defn}
Let $(V,q)$ be a quadratic $R$--module with $V$ projective over $R$ 
so that $\Or(q)\leqslant \Aut_{R-alg}^{\uzero}(\Cl(q))$ is a subgroup.
The {\em Clifford group\/} of $q$ is then $\Gamma(q) = i^{-1}(O(q))\leqslant  \Cl(q)^{*}_{hom}$ and the {\em special 
Clifford group\/} is $S\Gamma(q) = \Gamma(q)\cap \Cl(q)^{\uzero}$.

In other words, $\Gamma(q)$ consists of those homogeneous invertible elements $u$ from $\Cl(q)$ whose 
associated graded inner automorphism maps $V\subset \Cl(q)$ to itself.
\end{defn}

\begin{remark}
Being an algebra automorphism of $\Cl(q)$, the map $i_{u}$ is uniquely determined by its action on $i_{q}(V)$.
There, it is given by $i_{u}(v) = (-1)^{|u|}uvu^{-1}$, as $|v|=\uone$. Thus, $u\in \Cl(q)^{*}_{hom}$ is in 
$\Gamma(q)$ if, and only if, $uvu^{-1}$ is again in $i_{q}(V)$ for any $v\in V$, as multiplication by 
the sign $(-1)^{|u|}$ preserves $i_{q}(V)$. This is the characterization of $\Gamma(q)$ that one 
often finds in the literature.
\end{remark}

\begin{example}[cf. {\cite[IV (6.1.2)]{Knus}}]
The following classical observation is crucial for our study of reflection groups.

An element $v\in V$ is homogeneous of degree $|v|=\uone$. It is in $\Cl(q)^{*}_{hom}$ if, and only if, 
$q(v)\in R^{*}$, as $v^{2} = q(v)1$. Therefore, the set
\begin{align*}
V^{*} &= \{v\in V\mid q(v)\in R^{*}\}
\end{align*}
is a subset of $\Cl(q)^{*}_{hom}$. In fact, it is a subset of $\Gamma(q)$ in that $i_{v}|_{i_{q}(V)}$ is the orthogonal
automorphism that is the reflection in the mirror perpendicular to $v$. Indeed, for any $w\in V$,
\begin{align*}
i_{v}(w) &= -vwv^{-1}&&\text{by definition of $i_{v}$,}\\
&=-vwq(v)^{-1}v&&\text{as $v^{-1}= q(v)^{-1}v$,}\\
&=(wv - 2(v,w))q(v)^{-1}v&&\text{by Remark \ref{rem2.6},}\\
&= w - 2(v,w)q(v)^{-1}v&&\text{as $v^{2}= q(v)$,}\\
&= s_{v}(w)&&\text{as in \ref{reflect}.}
\end{align*}
\end{example}

\begin{proposition}
Let $(V,q)$ be a quadratic $R$--module with $V$ projective over $R$. 
If $u\in \Gamma(q)$, then $N(u)\in (Z^{\uzero})^{*}$.
\end{proposition}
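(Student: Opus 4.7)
The plan is to verify three things in turn: (i) $N(u)$ lies in $\Cl(q)^{\uzero}$, (ii) $N(u)$ is invertible in $\Cl(q)$, and (iii) $N(u)$ commutes with every element of $V\subseteq\Cl(q)$; since $V$ generates $\Cl(q)$ as an $R$--algebra, (iii) then forces $N(u)$ into the graded center, and combined with (i) and (ii), into $(Z^{\uzero})^{*}$.

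First I would record the elementary observations. Because $\alpha$ and $t$ both preserve the $\ZZ/2\ZZ$--grading, so does the anti--automorphism $\overline{(\ )}=t\alpha$; thus $\overline u$ is homogeneous of the same parity as $u$, and $N(u)=\overline u\, u$ is homogeneous of parity $2|u|=\uzero$, giving (i). For (ii), since $u\in \Cl(q)^{*}_{hom}$ and $\overline{(\ )}$ is an anti--automorphism with inverse itself (an involution), $\overline u$ is invertible with $\overline u^{\,-1}=\overline{u^{-1}}$, so $N(u)=\overline u\, u$ is invertible in $\Cl(q)$.

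The main step is (iii), and this is where the definition of $\Gamma(q)$ enters. Fix $v\in V$. By hypothesis $i_{u}(v)=(-1)^{|u|}uvu^{-1}\in V$, so in particular $\overline{i_{u}(v)}=-i_{u}(v)$ since $\overline w=-w$ for any $w\in V$. On the other hand, applying the anti--automorphism $\overline{(\ )}$ to the expression $(-1)^{|u|}uvu^{-1}$ and using $\overline{v}=-v$ gives
\begin{align*}
\overline{i_{u}(v)} \;=\; (-1)^{|u|}\,\overline{u^{-1}}\cdot\overline v\cdot\overline u \;=\; (-1)^{|u|+1}\,\overline u^{\,-1}\, v\, \overline u.
\end{align*}
Equating the two expressions for $\overline{i_{u}(v)}$ and canceling signs produces
\begin{align*}
u\, v\, u^{-1} \;=\; \overline u^{\,-1}\, v\, \overline u,
\end{align*}
which rearranges to $\overline u\, u\, v = v\, \overline u\, u$, i.e.\ $N(u)\,v = v\,N(u)$. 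So $N(u)$ centralizes $V$.

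To finish, since $\Cl(q)$ is generated as an $R$--algebra by $V$ (together with $R\cdot 1$, which is automatically central), $N(u)$ is in the ungraded center of $\Cl(q)$; combined with $|N(u)|=\uzero$, it lies in $Z^{\uzero}$. Finally, $N(u)^{-1}$ exists in $\Cl(q)$ by (ii), and conjugating the relation $N(u)\,x=x\,N(u)$ by $N(u)^{-1}$ shows $N(u)^{-1}$ is central as well, hence $N(u)^{-1}\in Z^{\uzero}$. Thus $N(u)\in (Z^{\uzero})^{*}$. The only nontrivial point is step (iii), and the obstacle there is purely one of bookkeeping with the signs introduced by the graded inner automorphism $i_{u}$ and by $\overline{(\ )}$; once those are organized consistently, the identity $u v u^{-1}=\overline u^{\,-1}v\,\overline u$ falls out immediately.
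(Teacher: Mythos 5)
Your proof is correct and follows essentially the same route as the paper's: both exploit that $i_u(v)\in V$ implies $\overline{i_u(v)}=-i_u(v)$, expand this using the anti-automorphism property of $\overline{(\;)}$, and deduce that $N(u)=\overline u\,u$ centralizes $V$, hence lies in $Z^{\uzero}$, with invertibility giving $(Z^{\uzero})^*$. The paper phrases the key step slightly more compactly as $i_u = i_{\overline u^{-1}}$ so that $i_{N(u)}=\id_V$, while you write out the cancellation of signs explicitly; these are cosmetic differences, not a different argument.
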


\begin{proof}
For any $u\in \Cl(q)$, its norm $N(u)$ is in $\Cl(q)^{\uzero}$ as noted above. For any $v\in V$ and 
$u\in \Gamma(q)$ one has $i_{u}(v) = -\overline{i_{u}(v)}$, as $i_{u}(v)\in V$, and then, by definition of 
$i_{u}$, that 
$ -\overline{i_{u}(v)} = -\overline{(-1)^{|u|}uvu^{-1}} = (-1)^{|\overline{u^{-1}}|}\overline{u^{-1}}v\overline u = 
i_{\overline u^{-1}}(v)$, as $|v| =\uone, |u|=|\overline{u^{-1}}|$. Thus, $i_{u}(v) = i_{\overline u^{-1}}(v)$.
Therefore, $i_{\overline u}i_{u}= i_{N(u)}=\id_{V}$, but this means that $N(u)$ commutes with all $v\in V$, as
$N(u)$ is an even element. 
Because $N(u)$ is an invertible element along with $u$ and because $V$ generates $\Cl(q)$, the claim follows.
\end{proof}

\begin{cor}
Let $(V,q)$ be a quadratic $R$--module with $V$ projective over $R$. The norm map defines a group homomorphism
\begin{align*}
N\colon \Gamma(q)\lto (Z^{\uzero})^{*}
\end{align*}
from the Clifford group to the multiplicative group of units in the ring $Z^{\uzero}$. Recall that $(Z^{\uzero})^{*}$ is equal to $R^*$ if $V$ is regular and $R$ is a field of characteristic $\neq 2$.
\end{cor}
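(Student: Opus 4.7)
The plan is to combine the preceding proposition, which handles well-definedness, with a short direct check of multiplicativity, and then to invoke Lemma \ref{lemma:gradedCentre} for the last assertion.

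First I would note that the previous proposition already shows $N(u)=\overline u u\in (Z^{\uzero})^{*}$ whenever $u\in\Gamma(q)$, so the map $N\colon\Gamma(q)\to(Z^{\uzero})^{*}$ is well-defined into the group of units. It remains only to verify multiplicativity. For $u,v\in\Gamma(q)$, since $\overline{(\ )}=t\alpha$ is an anti-automorphism of $\Cl(q)$, one has
\[
N(uv)=\overline{uv}\,(uv)=\overline v\,\overline u\,u\,v=\overline v\,N(u)\,v.
\]
The key observation is that $N(u)\in Z^{\uzero}$, and since $|N(u)|=\uzero$, the defining graded-commutation relation of $Z$ reduces to ordinary commutation: for every homogeneous $x$, $N(u)x=(-1)^{|N(u)||x|}xN(u)=xN(u)$. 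Consequently $N(u)$ commutes with every element of $\Cl(q)$, and in particular with $\overline v$, so
\[
N(uv)=\overline v\,N(u)\,v=N(u)\,\overline v\,v=N(u)\,N(v),
\]
establishing that $N$ is a group homomorphism.

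For the final sentence, if $(V,q)$ is regular and $R$ is a field of characteristic $\neq 2$, then Lemma \ref{lemma:gradedCentre} gives $Z\cong Z^{\uzero}\cong R$, so $(Z^{\uzero})^{*}=R^{*}$, and we obtain $N\colon\Gamma(q)\to R^{*}$ as a group homomorphism in that case.

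I do not expect any real obstacle: the preceding proposition on the centrality of $N(u)$ does the substantive work, and once centrality is in hand the multiplicativity of $N$ reduces to the two elementary facts that $\overline{(\ )}$ reverses products and that even-degree central elements commute with everything. The only point requiring a moment of attention is to remember that the graded centre coincides with the ordinary centralizer precisely on its even part, which is exactly what puts $N(u)$ past $\overline v$ without a sign.
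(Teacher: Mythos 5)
Your proof is correct and follows essentially the same route as the paper: you expand $N(uv)=\overline{uv}\,uv=\overline v\,\overline u\,u\,v$ using that conjugation is an anti-automorphism, then commute the even central element $N(u)$ past $\overline v$ to get $N(u)N(v)$, with the final sentence handled by Lemma \ref{lemma:gradedCentre}. Your one extra remark — spelling out that the graded-commutation defining $Z$ degenerates to ordinary commutation because $|N(u)|=\uzero$ — is a useful clarification of a step the paper leaves implicit, but it is the same argument.
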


\begin{proof}
Using that conjugation is an antiautomorphism, $N(uu') = \overline{uu'}uu' = \overline{u'}\overline{u}uu'$.
Now $N(u)=\overline{u}u\in Z^{\uzero}$ by the preceding Proposition, thus, 
$\overline{u'}\overline{u}uu' = N(u)\overline{u'}u'=N(u)N(u')$.
\end{proof}

\begin{remark} \label{Rmk:Spin-equal}
Note that if $x \in \Cl^{\underline{0}}$, then $N(x)=1$ if and only if $N'(x)=1$, because $\alpha$ is the identity on the even part of the Clifford algebra, also cf.~Remark \ref{Rmk:N=N'}.
\end{remark}

Now we can finally define the $Pin$ and $Spin$ groups of a quadratic module. 
%\rcom{And all hell breaks lose!} 
There are at least three definitions of the Pin group in use in the literature, cf.~Remark \ref{Rmk:3PinDefs}. For more detailed comments also see \cite{Ragnar-overflow}.

\begin{defn}
Let $(V,q)$ be a quadratic $R$--module with $V$ projective over $R$. The kernel of the group homomorphism
$N\colon \Gamma(q)\lto (Z^{\uzero})^{*}$ is the {\em Pin group\/} $\Pin(q)$, with
$\Spin(q) = \Pin(q)\cap S\Gamma(q)$ the {\em Spin group\/} of $q$. \\
The kernel of the group homomorphism
$N' \colon \Gamma(q)\lto (Z^{\uzero})^{*}$ is equal to the kernel of $N\colon \Gamma(q)\lto (Z^{\uzero})^{*}$ and so $\Pin'(q) \cap S\Gamma(q)$ yields the same Spin group. \\
The kernel of the group homomorphism $N^2=(N')^2 \colon \Gamma(q)\lto (Z^{\uzero})^{*}$ is the {\em big Pin group\/} $\PIN(q)$, with the {\em big Spin group}
$\mathsf{SPIN}(q) = \PIN(q)\cap S\Gamma(q)$.
\end{defn}

\begin{remark}
By Remark \ref{Rmk:Spin-equal} we only get two (potentially) different Spin groups. However, if $(V,q)$ regular and  $R^*/(R^*)^2 =  \{ \pm 1\}$, then $\mathsf{SPIN}(q)=\Spin(q)$. 
\end{remark}

By their very definition, $\Pin(q)$, and so also $\Spin(q)$ act through orthogonal $R$--linear maps on $V$.
Moreover, one can easily identify the kernels of these actions.
\begin{proposition}  \label{Prop:PinExact}
One has short exact sequences of groups
\begin{align*}
\xymatrix{
1\ar[r]&Z\cap \Cl(q)^{*}_{hom}\ar[r]&\Gamma(q)\ar[r]&\Or(q)\\
1\ar[r]&K\ar[r]&\Pin(q)\ar[r]&\Or(q)\\
1\ar[r]&L\ar[r]&\PIN(q)\ar[r]&\Or(q)\\
1\ar[r]&SK\ar[r]&\Spin(q)\ar[r]&\Or(q)
}
\end{align*}
where $K= \{z\in Z\cap \Cl(q)^{*}_{hom}\mid N(z)=1\}$, $L=\{ z \in Z \cap \Cl(q)^{*}_{hom}\mid N^2(z)=1\}$ and $SK= \{z\in (Z^{\uzero})^{*}\mid N(z)=1\}$.\\
 If $R^*/(R^*)^2 \subseteq \{ \pm 1 \}$, and $2$ is invertible in $R$, then the map $\PIN(q) \xrightarrow{} \Or(q)$ is surjective.
\end{proposition}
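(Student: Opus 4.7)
The common pattern is that all four maps to $\Or(q)$ are restrictions of the twisted adjoint representation $i\colon \Cl(q)^*_{hom}\to \Aut_{R\text{-}\mathrm{alg}}^{\uzero}(\Cl(q))$. By definition $\Gamma(q)=i^{-1}(\Or(q))$, so the restriction $\Gamma(q)\to \Or(q)$ is automatically well defined. By the preceding proposition, $\ker(i)=Z\cap \Cl(q)^*_{hom}$, and any element $z$ of this graded centre satisfies $i_z=\mathrm{id}$, hence trivially maps $V$ to $V$; thus $Z\cap \Cl(q)^*_{hom}\subseteq \Gamma(q)$, and the kernel of $\Gamma(q)\to \Or(q)$ is exactly $Z\cap \Cl(q)^*_{hom}$. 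This proves exactness of the first sequence.

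The three remaining sequences are obtained by restricting further to the kernel of a norm map. Since $\Pin(q), \PIN(q), \Spin(q)$ all sit inside $\Gamma(q)$ and are cut out by the conditions $N=1$, $N^2=1$, and ($N=1$ plus evenness) respectively, their kernels under $i$ are obtained by intersecting $Z\cap \Cl(q)^*_{hom}$ with these conditions, immediately yielding $K$ and $L$. For the Spin case, $\Spin(q)\subseteq \Cl(q)^{\uzero}$ forces the kernel to lie in $Z^{\uzero}\cap \Cl(q)^*_{hom}$; because $Z^{\uzero}$ coincides with the ungraded centre of $\Cl(q)$ restricted to the even part, it is closed under taking inverses in $\Cl(q)$, so this intersection equals $(Z^{\uzero})^*$, producing $SK$.

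For the surjectivity of $\PIN(q)\to \Or(q)$ under the hypotheses $2\in R^*$ and $R^*/(R^*)^2\subseteq\{\pm 1\}$ (with $R$ a field and $q$ regular so that Cartan--Dieudonn\'e applies), the plan is to combine that theorem with a rescaling trick. Every element of $\Or(q)$ is a product of reflections $s_v$ with $v\in V^*$, and the earlier example shows $s_v=i_v$ with $v\in \Gamma(q)$, so it suffices to lift each $s_v$ to $\PIN(q)$. Since $\overline v=-v$, one computes $N(v)=-q(v)\mathbf{1}\in R^*$, hence $N^2(rv)=r^4 q(v)^2$ for any $r\in R^*$. Because $s_{rv}=s_v$, it is enough to find $r\in R^*$ with $r^2=\pm q(v)^{-1}$; the hypothesis $R^*/(R^*)^2\subseteq\{\pm 1\}$ guarantees that either $q(v)^{-1}$ or $-q(v)^{-1}$ is a square in $R^*$, so such an $r$ exists.

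The main obstacle is the surjectivity step: everything in the first two paragraphs reduces to tracking subgroups and intersecting with the kernel of $i$, but the surjectivity onto $\Or(q)$ genuinely requires both Cartan--Dieudonn\'e (to reduce to reflections) and a hypothesis on $R^*/(R^*)^2$ (to rescale each reflection-vector into the tighter group $\ker N^2$). Notably, the analogous statement for $\Pin(q)=\ker N$ or $\Spin(q)$ would require that $\pm q(v)^{-1}$ itself lie in a prescribed value set, which is why the paper's surjectivity assertion is made only for the big Pin group.
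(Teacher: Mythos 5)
Your proof is correct and follows essentially the same approach as the paper: the kernel computations reduce to intersecting $\ker(i) = Z\cap \Cl(q)^*_{hom}$ with the defining norm conditions, and surjectivity of $\PIN(q)\to\Or(q)$ comes from Cartan--Dieudonn\'e plus rescaling each reflection-vector $v$ by a scalar so that $q(v)=\pm1$, which is exactly what the hypothesis $R^*/(R^*)^2\subseteq\{\pm1\}$ permits. Your closing remark correctly identifies why the small $\Pin(q)$ need not surject (one would need $-q(v)^{-1}$ to actually be a square, not merely a square up to sign); for $\Spin(q)$, though, the true obstruction is parity ($\Spin$ sits in the even part, so it can at best hit $\SO(q)$), not a value-set condition.
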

\begin{proof}
  The description of the kernels follow immediately from the definitions.
  To show the last statement, consider the reflection $s_v \in \Or(q)$ for the vector $v \in V$ with $q(v) \neq 0$.  We can replace $v$ with $\frac{v}{\sqrt{q(v)}}$ so now $q(v) = \pm 1$.  Now we have $N^2(v) = 1$ and so $v \in \PIN(q)$.  By the Cartan-Dieudonn\'e Theorem, we obtain the result.
  \end{proof}

\begin{defn}
  Let $(V,q)$ be a quadratic module over $R$ with $\rank V =n$.  We define the special orthogonal group 
  $$\SO(q) := \SL(R,n) \cap \Or(q) = \{ M \in \Or(q) \mid \det M =1\}.$$
\end{defn}
\begin{theorem} \label{Thm:squaregeneral}Let $(V,q)$ be a quadratic module over a field $R$ of characteristic $\neq 2$.
Let $P(q)$ be any of the three Pin groups.
  Then there are commutative diagrams
\begin{align*}
\xymatrix{
P(q)\ar[r]&\Or(q)\\
\Spin(q)\ar[r]\ar@{^{(}->}[u]&\SO(q)\ar[u]
}
 & \hspace{3cm}
 \xymatrix{
\PIN(q)\ar@{->>}[r]&\Or(q)\\
\mathsf{SPIN}(q)\ar[r] \ar@{^{(}->}[u]&\SO(q)\ar[u]
}
\end{align*}
\end{theorem}
\begin{proof}
  By definition we know that $\Spin(q) \leqslant P(q)$ for all choices of Pin group.  We also know that $P(q) \subseteq \Gamma(q)$  maps to  $\Or(q)$.
  Again by definition $\SO(q) \leqslant \Or(q).$  We need to show that if $x \in \Spin(q)$ then $i_x \in \Or(q)$ has $\det i_x =1$.  By definition $x$ is in $\Cl^{\underline{0}}$.
  Since $i_x \in \Or(q)$, the Cartan-Dieudonn\'e Theorem says that it is a product of $k$ reflections with $k \leq n$.
  So there are elements $v_1,\ldots,v_k \in V^*$ such that
  $$i_x =  \prod_{i=1}^k s_{v_i}.$$
  Since $\det s_{v_i} = -1$ for all $i$, we see that $\det i_x = (-1)^k$. So we get 
  $$i_x = \prod s_{v_i} = \prod i_{v_i} = i_{\prod v_i}.$$
  Note that the inverse of $i_{\prod v_i}$ is given by carrying out the reflections in the opposite order, that is, $(i_{\prod v_i})^{-1}=i_{\prod_{i=k}^1 v_i}$ and 
 we see that $$x \prod_{i=k}^1 v_i \in \ker i.$$
  Note that $\ker i $ is contained in the graded centre of $\Cl(q)$ which has degree zero by Lemma~\ref{lemma:gradedCentre}.
  Therefore $\prod_{i=1}^k v_i$ has degree zero and we are done. 
  \end{proof}
%\end{align*}

\begin{sit} {\bf Questions about (s)pin groups.}
 It would be interesting to determine the cokernels in the exact sequences in Prop.~\ref{Prop:PinExact} and in Thm.~\ref{Thm:squaregeneral}. Moreover, it is not clear in general in which cases the various (s)pin groups are the same or differ from each other. In particular, it would be interesting to see in which way $\Spin$ and the big $\mathsf{SPIN}$ differ when the ring $R$ does not satisfy $R^*/(R^*)^2 = \{ \pm 1 \}$.
 %Apr{\`e}s nous le d{\'e}luge....
 \end{sit}

Finally, to see clearly the confusion existing in the literature, consider the simplest non-trivial cases for real Clifford algebras.  

\subsection*{Real Clifford Algebras}
We write $\Cl_{m,n-m} = \Cl(q_{m,n-m})$. To save space, for a ring $A$ we set $^{2}A =A\times A$ with componentwise operations, $A(d) =\Mat_{d\times d}(A)$, the matrix ring over $A$. We will write $P^+(n)$ and $P^-(n)$ for the any of the Pin groups $P$ of $\Cl_{n,0}$ and $\Cl_{0,n}$, respectively, and use analogous notation for the Spin groups. Note that over $\RR$, the Spin groups $\mathsf{SPIN}$ and $\Spin$ coincide.

One has the following structure theorem for the algebras $\Cl_{m,n-m}$. 
\begin{theorem}[see {\cite[p. 217]{Lou}} for this table] \label{Thm:realClifftable}
The Clifford algebras $\Cl_{m,n-m}$ for $0\leqslant n <8$ are isomorphic to
{\footnotesize
\[
\begin{array}{c|ccccccccccccccc}
\hline 2m-n&-7&-6&-5&-4&-3&-2&-1&0&1&2&3&4&5&6&7\\
\hline n=0&&&&&&&&\RR\\
1&&&&&&&\CC&&^{2}\RR\\
2&&&&&&\HH&&\RR(2)&&\RR(2)\\
3&&&&&^{2}\HH&&\CC(2)&&^{2}\RR(2)&&\CC(2)\\
4&&&&\HH(2)&&\HH(2)&&\RR(4)&&\RR(4)&&\HH(2)\\
5&&&\CC(4)&&^{2}\HH(2)&&\CC(4)&&^{2}\RR(4)&&\CC(4)&&^{2}\HH(2)\\
6&&\RR(8)&&\HH(4)&&\HH(4)&&\RR(8)&&\RR(8)&&\HH(4)&&\HH(4)\\
7&^{2}\RR(8)&&\CC(8)&&^{2}\HH(4)&&\CC(8)&& ^{2}\RR(8)&&\CC(8)&& ^{2}\HH(4)&&\CC(8)\\
\hline
\end{array}
\]}
and $\Cl_{m,n-m+8}\cong \Cl_{m,n-m}(16)$.\qed
\end{theorem}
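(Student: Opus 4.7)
The plan is to prove the table by combining three ingredients: explicit identification in low rank, a small catalogue of \emph{recursion isomorphisms} that climb one row or one column of the table at a time, and classical matrix--algebra identities to simplify the resulting tensor products. Once these ingredients are in place, the rest is systematic bookkeeping.

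First I would establish the base cases $\Cl_{0,0}\cong\RR$, $\Cl_{1,0}\cong{}^{2}\RR$, $\Cl_{0,1}\cong\CC$, $\Cl_{2,0}\cong\RR(2)$, $\Cl_{0,2}\cong\HH$, and $\Cl_{1,1}\cong\RR(2)$ by hand. In each case, I would send the orthonormal generators $e_i$ of the relevant quadratic space to explicit elements of the target whose squares have the right signs (for $\Cl_{2,0}\to\RR(2)$, the first two Pauli matrices from \ref{Pauli-matrices}; for $\Cl_{0,2}\to\HH$, the quaternions $\bfi$ and $\bfj$; for $\Cl_{1,1}\to\RR(2)$, one matrix squaring to $+\mathbf 1$ and one to $-\mathbf 1$). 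The universal property of the Clifford algebra produces an algebra homomorphism, and a comparison of real dimensions forces it to be an isomorphism.

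Next I would establish the three fundamental recursion isomorphisms
\begin{align*}
\Cl_{p+1,q+1}&\cong\Cl_{p,q}\otimes_{\RR}\Cl_{1,1},\\
\Cl_{p+2,q}&\cong\Cl_{q,p}\otimes_{\RR}\Cl_{2,0},\\
\Cl_{p,q+2}&\cong\Cl_{q,p}\otimes_{\RR}\Cl_{0,2}.
\end{align*}
Each is proved by the same recipe: choose an orthonormal basis $e_1,\dots,e_{p+q+2}$ of the quadratic space on the left and send the first $p+q$ generators to $e_i\otimes\omega$, where $\omega$ is the top form of the two--dimensional Clifford factor on the right, while the last two generators are sent to $1\otimes f_j$. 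Because $\omega$ anticommutes with each $f_j$ and $\omega^2=\pm\mathbf 1$, the required Clifford relations hold, and this is exactly what forces the signature swap $(p,q)\mapsto(q,p)$ in the second and third formulas. The universal property gives the map, and a dimension count finishes the proof.

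Finally, iterating these recursions writes each $\Cl_{m,n-m}$ as an iterated real tensor product of the six base algebras, and the classical identities
\[
\CC\otimes_{\RR}\CC\cong{}^{2}\CC,\quad
\CC\otimes_{\RR}\HH\cong\CC(2),\quad
\HH\otimes_{\RR}\HH\cong\RR(4),\quad
A(a)\otimes_{\RR}A(b)\cong A(ab)
\]
collapse these products entry by entry to the algebras in the table. The $8$--periodicity $\Cl_{m,n-m+8}\cong\Cl_{m,n-m}(16)$ then follows from $\Cl_{0,8}\cong\RR(16)$, computed either by four iterations of the signature--swapping recursions or by applying $\Cl_{2,0}\otimes_{\RR}\Cl_{0,2}\cong\Cl_{2,2}\cong\RR(4)$ twice. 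The main obstacle is not any single step but the sign bookkeeping in the recursions and in the $\{\RR,\CC,\HH\}$ tensor table: a single sign error there propagates through the periodicity and shifts an entire column of the table. For that reason I would prove the recursion lemma once, very carefully, and then invoke it as a black box when traversing the $0\leqslant n<8$ rows.
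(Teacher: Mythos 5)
Your proposal is correct and is essentially the standard textbook proof — indeed, it is the one found in the very reference the paper cites (Lounesto~\cite[Ch.~16]{Lou}). The paper itself offers no proof, only a $\qed$ with a pointer to that table, so your argument genuinely fills in what the paper omits. The base cases are right, the three recursion isomorphisms are the correct ones (and you correctly identify that the signature swap $(p,q)\mapsto(q,p)$ in the second and third comes from the top element $\omega=f_1f_2$ of $\Cl_{2,0}$ and $\Cl_{0,2}$ satisfying $\omega^2=-1$, whereas $\omega^2=+1$ in $\Cl_{1,1}$), and the reduction to tensor identities over $\{\RR,\CC,\HH\}$ is the right endgame.

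One small imprecision worth fixing: the identity you write as $A(a)\otimes_{\RR}A(b)\cong A(ab)$ is literally false for $A=\CC$ and $A=\HH$ — one has $\CC(a)\otimes_{\RR}\CC(b)\cong{}^{2}\CC(ab)$ and $\HH(a)\otimes_{\RR}\HH(b)\cong\RR(4ab)$. What you actually need, and clearly intend, is the matrix absorption identity
\[
\Mat_{a}(\RR)\otimes_{\RR}\Mat_{b}(A)\cong\Mat_{ab}(A)
\]
for any $\RR$--algebra $A$, together with the three exceptional ones you list and the distributivity ${}^{2}A\otimes_{\RR}B\cong{}^{2}(A\otimes_{\RR}B)$ to handle entries of the form ${}^{2}\RR(d)$ and ${}^{2}\HH(d)$. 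With that correction, the bookkeeping you describe closes the argument, and your derivation of the periodicity via $\Cl_{0,8}\cong\RR(16)$ is exactly right.
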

%Let us extract the various pieces defined so far in some special cases.

\begin{example}[$\mathbf{\Cl_{0,1}}$]
Here we consider $V=\RR e$ with $q(re) = -r^{2}$, thus, $\Cl_{0,1}\cong\CC$
by sending $e\mapsto i$, as in the table above. The even Clifford algebra is $\RR\cdot 1\subset \CC$, 
the real part, the odd part is $\RR i\subset \CC$, the purely imaginary numbers. 

The principal automorphism $\alpha$ corresponds to complex conjugation, the principal antiautomorphism $t$ 
is the identity on $\CC$.

Accordingly, we have 
\[
\begin{array}{|c|c|c|c|c|c|c|c|c|c|}
\hline
\Cl^{\uzero}&\Cl^{\uone}&\Cl^{*}&\Cl^{*}_{hom}&\Gamma&S\Gamma&O& N(a+b i)&N'(a+bi)\\
\hline
\vphantom{\frac{1^{2}}{2}}\RR{\cdot} 1&\RR{\cdot} i&\CC^{*}&\RR^{*}{\cdot} 1\cup\RR^{*}{\cdot} i& 
\Cl^{*}_{hom}&\RR^{*}{\cdot} 1& \{\pm1\}& a^{2} + b^{2}&(a+bi)^{2}\\
\hline
\end{array}
\]
Note that $\Gamma = (\Cl_{0,1})^{*}_{hom}$ as $\Cl_{0,1}$ is commutative, thus, $(-1)^{|u||x|}uxu^{-1} =
(-1)^{|u||x|}x$ and preservation of $i_{q}V$ is automatic.
 For use below, we record as well that $(Z^{\uzero})^{*}=\RR^{*}{\cdot} 1$ so that
\begin{align*}
\Pin^{-}(1) & =  \ker(N:\Gamma\to \RR^{*}{\cdot} 1) & \cong & \{\pm 1, \pm i\}\cong \mu_{4}\,,\\
\Pin'^{-}(1) & =  \ker(N':\Gamma\to \Cl^{*})  & \cong &  \{\pm 1\}\cong \mu_{2}\,,\\
\PIN^{-}(1) & =  \ker(N^{2}:\Gamma\to \RR^{*}{\cdot} 1)   & = & \ker(N:\Gamma\to \RR^{*}{\cdot} 1)\cong \mu_4 \,.
\end{align*}
Here $\mu_{d}$ represents the multiplicative cyclic group of order $d$. In all three cases, intersection with
$S\Gamma$ returns $\{\pm1\}\cong \mu_{2}$. \\
One sees that the big Pin group in this case is the same as $\Pin$ and that $\Pin'$ is strictly smaller. For all three cases we get that $\Spin^{-}(1) \cong \Pin'^{-}(1) \cong \mu_2$.
\end{example}

\begin{example}[$\mathbf{\Cl_{1,0}}$] Next consider $V=\RR e$ with $q(re) = r^{2}$, thus, 
\[
\Cl_{0,1}\cong \RR[e]/(e^{2}-1)\cong \RR[e]/(e-1)\times \RR[e]/(e+1)\cong\RR\times\RR\,,
\] 
the composition sending $a+be\mapsto (a+b, a-b)$. Note that the inverse isomorphism sends
$(c,d)\in\RR\times \RR$ to $\tfrac{1}{2}((c+d) + (c-d)e)$.

Set $\Delta(r) = (r,r), \nabla(r) = (r,-r)$, for $r\in\RR$.
The even Clifford algebra is then $\Delta(\RR)\subset \RR\times \RR$, the odd part is 
$\nabla(\RR) \subset \RR\times\RR$. 

The principal automorphism $\alpha$ corresponds to $(c,d)\mapsto (d,c)$, while 
the principal antiautomorphism $t$ is again the identity on $\RR\times \RR$.

Accordingly, we have 
\[
\begin{array}{|c|c|c|c|c|c|c|c|c|c|}
\hline
\Cl^{\uzero}&\Cl^{\uone}&\Cl^{*}&\Cl^{*}_{hom}&\Gamma&S\Gamma&O& N(c,d)&N'(c,d)\\
\hline
\Delta(\RR)&\nabla(\RR)&\RR^{*}\times\RR^{*}&\Delta(\RR^{*})\cup\nabla(\RR^{*})& \Cl^{*}_{hom}&
\Delta(\RR^{*})& \{\pm1\}& cd(1,1)&(c^{2}, d^{2})\\
\hline
\end{array}
\]
Note again that $\Gamma = (\Cl_{1,0})^{*}_{hom}$ as $\Cl_{1,0}$ is commutative, thus, $(-1)^{|u||x|}uxu^{-1} =
(-1)^{|u||x|}x$ and preservation of $i_{q}V$ is automatic.

For use below, we record as well that $(Z^{\uzero})^{*}=\Delta(\RR^{*})$ so that
\begin{align*}
\Pin^+(1) & = \ker(N:\Gamma\to \Delta(\RR^{*})) &\cong & \{\pm (1,1)\}\cong \mu_{2}\,,\\
\Pin'^{+}(1) & = \ker(N':\Gamma\to \Cl^{*}) &\cong & \{(\pm 1,\pm 1)\}\cong \mu_{2}\times \mu_{2}\,,\\
\PIN^{+}(1) & = \ker(N^{2}:\Gamma\to \Delta(\RR^{*}))&= & \ker(N':\Gamma\to \Cl^{*})\,.
\end{align*}
In all three cases, intersection with $S\Gamma$ returns $\{\pm(1,1)\}\cong \mu_{2}$. Here we have $\Pin'^{+}(1)\cong \PIN^{+}(1) \cong \mu_2 \times \mu_2$ and $\Pin^{+}(1)\cong  \Spin^{+} \cong \mu_2$.
\end{example}

\begin{example}[$\mathbf{\Cl_{0,2}}$] Here $V=\RR e_1 \oplus \RR e_2$ with $q(x_1e_1 + x_2e_2)=-x_1^2-x_2^2$. As algebra we have $\Cl_{0,2}\cong \HH$. The isomorphism is given as $1 \mapsto1$, $e_1 \mapsto i$, $e_2 \mapsto j$, and  $e_1e_2=e_{12} \mapsto k$. We also have the embedding $\lambda(a +bi)=a +bk$.
\[
\begin{array}{|c|c|c|c|c|c|c|c|c|c|}
\hline
\Cl^{\uzero}&\Cl^{\uone}&\Cl^{*}&\Cl^{*}_{hom}&\Gamma&S\Gamma&O\\
\hline
\lambda(\CC)=\RR \oplus \RR k & \RR i \oplus \RR j & \HH^* & \lambda( \CC^*) \cup \lambda( \CC^*)i  & \Cl^*_{hom} & \lambda(\CC^*) & S^1 \cup S^1  \\
\hline
\end{array}
\]
The norms are given as $N: \Gamma \xrightarrow{} (Z^{\underline{0}})^*$, $N': \Gamma \xrightarrow{} \Cl^*_{hom}$. Note that $(Z^{\underline{0}})^*=\RR^* \cdot 1$.
\[
\begin{array}{|c|c|c|c|c|}
\hline
\mathrm{norm} & x_0 + x_3k&x_1i + x_2j&\ker(\mathrm{norm}: \Gamma \xrightarrow{} (Z^{\underline{0}})^*)& \ker(\mathrm{norm}: \Gamma \xrightarrow{} (Z^{\underline{0}})^*)  \cap S\Gamma  \\
\hline
N &  x_0^2 +x_3^2& x_1^2-+x_2^2 & \lambda(S^1)\cup \lambda(S^1)i & \lambda(S^1)  \\
N' & x_0^2+x_3^2 & -x_1^2 - x_2^2 & \lambda(S^1) & \lambda(S^1)  \\
N^2=(N')^2  & (x_0^2+x_3^2)^2 &(x_1^2 + x_2^2)^2& \lambda(S^1) \cup \lambda(S^1)i  & \lambda(S^1)  \\
\hline
\end{array}
\]
This means that $\Pin^-(2)\cong \PIN^-(2)\cong \lambda(S^1) \cup \lambda(S^1)i$ and $\Pin'^{-}\cong  \Spin^-(2)\cong S^1$.
\end{example}

\begin{example}[$\mathbf{\Cl_{2,0}}$] Here $V=\RR e_1 \oplus \RR e_2$ with $q(x_1e_1 + x_2e_2)=x_1^2+x_2^2$. As algebra we have $\Cl_{2,0}\cong \Mat_{2 \times 2}(\RR)$. The isomorphism is given as 
$$1 \mapsto {\bf 1}, \  e_1 \mapsto \begin{pmatrix} 0 & 1 \\ 1 & 0 \end{pmatrix}, \ e_2 \mapsto \begin{pmatrix}  -1 & 0 \\ 0 & 1 \end{pmatrix}, \ \textrm{and } e_{12} \mapsto \begin{pmatrix}   0 & 1 \\ -1 & 0 \end{pmatrix}. $$
 Consider the embedding $\lambda: \CC \xrightarrow{} \Mat_{2 \times 2}(\RR), (x+yi) \mapsto \begin{pmatrix}  x & y \\ -y & x \end{pmatrix}$.

\[
\begin{array}{|c|c|c|c|c|c|c|c|c|c|}
\hline
\Cl^{\uzero}&\Cl^{\uone}&\Cl^{*}&\Cl^{*}_{hom}&\Gamma&S\Gamma&O\\
\hline
\lambda(\CC) & \lambda(\CC)e_1 & \GL(2,\RR) & \lambda( \CC^*) \cup \lambda( \CC^*)e_1  & \Cl^*_{hom} & \lambda(\CC^*) & S^1 \cup S^1  \\
\hline
\end{array}
\]
The norms are given as $N: \Gamma \xrightarrow{} \lambda(\CC^*)$, $N': \Gamma \xrightarrow{} \Cl^*_{hom}$. Note that $(Z^{\underline{0}})^*=\RR^* \cdot 1$.
\[
\begin{array}{|c|c|c|c|c|}
\hline
\mathrm{norm} & \lambda(x_0 + x_3i)&\lambda(x_1 + x_2i)e_1&\ker(\mathrm{norm}: \Gamma \xrightarrow{} (Z^{\underline{0}})^*)& \ker(\mathrm{norm}: \Gamma \xrightarrow{} (Z^{\underline{0}})^*) \cap S\Gamma  \\
\hline
N &  x_0^2 +x_3^2& -x_1^2-x_2^2 & \lambda(S^1) & \lambda(S^1)  \\
N' & x_0^2+x_3^2 & x_1^2 + x_2^2 & \lambda(S^1) \cup \lambda(S^1)e_1 & \lambda(S^1)  \\
N^2=(N')^2  & (x_0^2+x_3^2)^2 &(x_1^2 + x_2^2)^2& \lambda(S^1) \cup \lambda(S^1)e_1  & \lambda(S^1)  \\
\hline
\end{array}
\]
Here we get $\Pin^+(2)\cong \Spin^+(2)\cong S^1$ and $\Pin'^+(2)\cong \PIN(2)=\lambda(S^1) \cup \lambda(S^1)e_1$.
\end{example}

%$N(\lambda(x_0 +x_3i))=x_0^2+x_3^2$ for the even part, and $N(\lambda(x_1 + x_2 i)e_1)=-x_1^2-x_2^2$ for the odd part. 

\begin{example}[$\mathbf{\Cl_{0,3}}$] Next consider $V=\oplus_{i=1}^{3}\RR e_{i}$ with 
$q(\sum_{i=1}^{3}x_{i}e_{i}) = -(x_{1}^{2}+x_{2}^{2}+x_{3}^{2})$. Using the table of Theorem \ref{Thm:realClifftable} one sees that 
$\Cl_{0,3}=\HH \times \HH$. The calculations of the Pin and Spin groups are analogous as to the case $\Cl_{3,0}$ that was treated in Section \ref{Sec:Pin3}. The readers are invited to carry out the details themselves. The results are $\PIN^{-}(3) \cong \Pin^{-}(3) \cong  \Un(2)^{\pm 1}$ and $\Pin'^{-}(3)\cong \Spin^{-}(3) \cong \SU(2)$.
\end{example}

\section{Acknowledgements}

The authors thank the Mathematisches Forschungsinstitut Oberwolfach for the inspiring perfect working environment. \\
E.F. and C.I. want to thank Paul Mezo for helpful discussions, and in particular we want to thank Ruth Buchweitz for her help and hospitality.

%\bibliographystyle{alpha}
%\bibliography{RagnarbiblioMcKay} 

\newcommand{\etalchar}[1]{$^{#1}$}
\def\cprime{$'$}

\end{document}